\theoremstyle{plain}
\newtheorem{thm}{Theorem}
\newtheorem{cor}[thm]{Corollary}
\newtheorem{lem}[thm]{Lemma}
\newtheorem{prop}[thm]{Proposition}
\theoremstyle{definition}
\newtheorem{definition}[thm]{Definition}
\newtheorem{rem}[thm]{Remark}
\newtheorem{exam}[thm]{Example}
\numberwithin{thm}{section}
\numberwithin{equation}{section}
\newcommand{\EQ}[1]{\eqref{eq:#1}} 
\newcommand{\LEM}[1]{Lemma~\ref{lem:#1}}    
\newcommand{\DEF}[1]{Definition~\ref{def:#1}}    
\newcommand{\THM}[1]{Theorem~\ref{thm:#1}}  
\newcommand{\REM}[1]{Remark~\ref{rem:#1}}  
\newcommand{\PROP}[1]{Proposition~\ref{prop:#1}}  
\newcommand{\COR}[1]{Corollary~\ref{cor:#1}} 
\newcommand{\SEC}[1]{Section~\ref{sec:#1}}  
\newcommand{\APP}[1]{Appendix~\ref{app:#1}}
\newcommand{\ENUM}[1]{\ref{enum:#1}}
\newcounter{hypo}
\newcommand{\HYP}[1]{\ref{hyp:#1}}
\newcommand{\maxboth}{\max\left\{\lambda^-_1(F,\Omega),\lambda^+_1(F,\Omega)\right\}}
\newcommand{\minboth}{\min\left\{\lambda^-_1(F,\Omega),\lambda^+_1(F,\Omega)\right\}}
\newcommand{\consp}{\delta_1}
\newcommand{\consz}{\delta_0}
\DeclareMathOperator{\trace}{trace}
\newcommand{\puccisub}[2]{\mathcal{P}^-_{#1,#2}}
\newcommand{\Puccisub}[2]{\mathcal{P}^+_{#1,#2}}
\newcommand{\pucci}{\mathcal{P}^-}
\newcommand{\Pucci}{\mathcal{P}^+}
\newcommand{\Fup}{F^*}
\newcommand{\Fdown}{F_*}
\newcommand{\inOmega}{\quad \mbox{in} \quad \Omega}
\newcommand{\onOmega}{\quad \mbox{on} \quad \partial \Omega}
\newcommand{\R}{\ensuremath{\mathbb{R}}}
\newcommand{\Z}{\ensuremath{\mathbb{Z}}}
\newcommand{\Sy}{\ensuremath{\mathbb{S}^n}}
\renewcommand{\labelenumi}{(\roman{enumi})}
\begin{document}
\title[Principal eigenvalues and an anti-maximum principle]{Principal eigenvalues and an anti-maximum principle for homogeneous fully nonlinear elliptic equations}
\author{Scott N. Armstrong}
\address{Department of Mathematics, University of California, Berkeley, CA 94720.}
\email{sarm@math.berkeley.edu}
\date{October 24, 2008}
\keywords{Fully nonlinear elliptic equation; Principal eigenvalue; Dirichlet problem; Anti-maximum principle}
\subjclass[2000]{35J60, 35P30,35B50}

\begin{abstract}
We study the fully nonlinear elliptic equation
\begin{equation*}
F(D^2u,Du,u,x) = f
\end{equation*}
in a smooth bounded domain $\Omega$, under the assumption that the nonlinearity $F$ is uniformly elliptic and positively homogeneous. Recently, it has been shown that such operators have two principal ``half" eigenvalues, and that the corresponding Dirichlet problem possesses solutions, if both of the principal eigenvalues are positive. In this paper, we prove the existence of solutions of the Dirichlet problem if both principal eigenvalues are negative, provided the ``second" eigenvalue is positive, and generalize the anti-maximum principle of Cl\'{e}ment and Peletier \cite{Clement:1979} to homogeneous, fully nonlinear operators.
\end{abstract}

\maketitle



\section{Introduction}

This paper is a contribution to the study of viscosity solutions of the uniformly elliptic, fully nonlinear partial differential equation
\begin{equation}\label{eq:maineq}
F(D^2u,Du,u,x) = f
\end{equation}
in a bounded domain $\Omega\subseteq \R^n$, subject to the Dirichlet boundary condition
\begin{equation}\label{eq:mainbc}
u = 0 \onOmega.
\end{equation}
The problem \EQ{maineq}-\EQ{mainbc} possesses a unique solution under the assumption the nonlinearity $F$ is \emph{proper}; that is, the map
\begin{equation*}
z \mapsto F(M,p,z,x) \quad \mbox{is nondecreasing}.
\end{equation*}
On the other hand, the Fredholm theory of compact linear operators provides a complete understanding of the existence and uniqueness of solutions of \EQ{maineq}-\EQ{mainbc} in the case $F=L$ is linear (see, for example, \cite{Evans:Book}). In particular, in this case the Dirichlet problem \EQ{maineq}-\EQ{mainbc} has a unique solution if and only if $0$ is not an eigenvalue of $F$.

\medskip

Recently, there has been much interest in studying \EQ{maineq}-\EQ{mainbc} for nonlinear operators that are not necessarily proper. Quaas and Sirakov~\cite{Quaas:2006,Quaas:2008} have shown that a nonlinear operator $F$ which is uniformly elliptic, as well as positively homogeneous and convex (or concave) in $(D^2u,Du,u)$, possesses two principal ``half" eigenvalues $\lambda^+_1(F,\Omega)$ and $\lambda^-_1(F,\Omega)$. The former corresponds to a positive eigenfunction $\varphi^+_1 > 0$ and the latter to a negative eigenfunction $\varphi^-_1 < 0$. In \cite{Quaas:2008} it was also demonstrated that the Dirichlet problem \EQ{maineq}-\EQ{mainbc} has a unique viscosity solution provided that both of these principal eigenvalues are positive.

Ishii and Yoshimura \cite{Ishii:preprint} have independently proven analogous results for operators which are not necessarily convex, such as the Bellman-Isaacs operator, and Birindelli and Demengel \cite{Birindelli:2006,Birindelli:2007} have shown similar results for certain nonlinear operators which are degenerate elliptic. All of these papers owe much to the work of Lions \cite{Lions:1983d}, who used stochastic methods to study the principal half-eigenvalues of certain Bellman operators, and also the ideas of Berestycki, Nirenberg and Varadhan \cite{Berestycki:1994}, who discovered deep connections between the maximum principle and principal eigenvalues of linear operators.

\medskip

In this paper, we provide new proofs of the results mentioned above. We then show that for positively homogeneous, fully nonlinear operators $F$, the Dirichlet problem \EQ{maineq}-\EQ{mainbc} possesses solutions provided
\begin{equation}\label{eq:existence-range}
\max \{ \lambda^+_1(F,\Omega), \lambda^-_1(F,\Omega) \} < 0 < \lambda_2(F,\Omega),
\end{equation}
where $\lambda_2(F,\Omega)$ is the infimum of all eigenvalues of $F$ which are larger than both $\lambda^+_1(F,\Omega)$ and $\lambda^-_1(F,\Omega)$. This result is new even for convex $F$, and so far as we know, is the first general existence result shown for a wide class of fully nonlinear operators which do not satisfy a comparison principle. Finally, studying certain of these solutions, we generalize the anti-maximum principle of Cl\'{e}ment and Peletier \cite{Clement:1979} to fully nonlinear equations.

\medskip

The phenomena of nonlinear operators possessing two principal half-eigenvalues was first noticed long ago by Pucci \cite{Pucci:1966}, who found explicit formulas for the eigenvalues and eigenfunctions of a specific operator (similar to the Pucci maximal and minimal operators) on a ball. It was also discovered by Berestycki \cite{Berestycki:1977} for Sturm-Liouville equations. For more on principal eigenvalues of nonlinear elliptic operators, we refer to \cite{Busca:1999,Busca:2005,Felmer:2004,Juutinen:preprint,Patrizi:preprint1,Patrizi:preprint2}.

Much of the recent work on eigenvalues of nonlinear operators is based on a deep connection with the maximum principle, exploited in the linear case by Berestycki, Nirenberg and Varadhan \cite{Berestycki:1994}. If $L$ is a linear operator then the maximum principle holds for the operator $L-\mu$ in $\Omega$ for $\mu$ in some open interval $(-\infty, \rho)$. In fact, $\lambda_1(L,\Omega) = \rho$. Alternatively, $\lambda_1(L,\Omega)$ can be characterized as the supremum of all $\mu$ for which there exists a positive supersolution $u > 0$ of the equation
\begin{equation*}
L u - \mu u \geq 0 \inOmega.
\end{equation*}
In \cite{Berestycki:1994}, these facts are generalized using arguments which rely not on linearity, but on homogeneity. It is these powerful techniques which open up the study of \EQ{maineq}-\EQ{mainbc} for nonlinear operators, as we will see below.

\medskip

Precise statements of our results are contained in \SEC{statement-of-results}. In \SEC{principal-eigenvalues}, we present a proof of the existence and investigate the basic properties of the principal eigenvalues $\lambda_1^+(F,\Omega)$ and $\lambda^-_1(F,\Omega)$. We study existence and uniqueness of solutions of the Dirichlet problem in \SEC{Dirichlet-problem}, in the case that at least one of the principal eigenvalues is positive. Our main results are the content of Sections \ref{sec:existence-past-lambda-1} and \ref{sec:anti-maximumprinciple}. First, in \SEC{existence-past-lambda-1}, we use the theory of Leray-Schauder degree to obtain the existence of viscosity solutions of the Dirichlet problem \EQ{maineq}-\EQ{mainbc} under the assumption that \EQ{existence-range} holds. In \SEC{anti-maximumprinciple}, we show that certain of these solutions satisfy an anti-maximum principle. For the reader's convenience, we have also included a proof of Hopf's Lemma for viscosity solutions in \APP{hopf}.

\medskip

This paper was completed while I was a Ph.D. student at the University of California, Berkeley. I would like to thank the Department of Mathematics for its support, and to express my gratitude to my thesis advisor, Prof. Lawrence C. Evans, for many years of patient guidance and encouragement. I also gratefully acknowledge the helpful comments and references I received from Prof. Boyan Sirakov, Prof. Hitoshi Ishii, and the referee.



\section{Statements of main results}\label{sec:statement-of-results}

Throughout this paper, we take $\Omega$ to be a bounded, smooth, and connected open subset of $\R^n$. We denote the set of $n$-by-$n$ symmetric matrices by $\Sy$. For $M \in \Sy$ and $0 < \gamma \leq \Gamma$, define
\begin{equation*}
\Puccisub{\gamma}{\Gamma} (M) = \sup_{A\in \llbracket\gamma,\Gamma\rrbracket} \left[ - \trace(AM) \right] \quad \mbox{and} \quad \puccisub{\gamma}{\Gamma} (M) = \inf_{A\in \llbracket\gamma,\Gamma\rrbracket} \left[ - \trace(AM) \right],
\end{equation*}
where the set $\llbracket\gamma,\Gamma\rrbracket \subseteq \Sy$ consists of the symmetric matrices the eigenvalues of which lie in the interval $[ \gamma, \Gamma]$. The nonlinear operators $\Puccisub{\gamma}{\Gamma}$ and $\puccisub{\gamma}{\Gamma}$ are the Pucci extremal operators. To ease our notation, we will often drop the subscripts and write $\Pucci$ and $\pucci$. See \cite{Caffarelli:Book,Caffarelli:1996} for basic properties of the Pucci operators.

\medskip

We require that our nonlinear operator
\begin{equation*}
F:\Sy \times \R^n \times \R \times \Omega \rightarrow \R
\end{equation*}
satisfies the following hypotheses:
\renewcommand{\labelenumi}{(F\arabic{hypo})}
\begin{enumerate}
\usecounter{hypo}
\item For each $K > 0$, there exist an increasing continuous function $\omega_K:[0,\infty)\to [0,\infty)$ for which $\omega_K(0)=0$, and a positive constant $\frac{1}{2}< \nu \leq 1$, depending on $K$, such that
\begin{equation*}
|F(M,p,z,x) - F(M,p,z,y) | \leq \omega_K\left( |x-y|^\nu (|M|+1) \right)
\end{equation*}
for all $M\in \Sy$, $p\in \R^n$, $z\in \R$, and $x,y\in \Omega$ satisfying $|p|, |z| \leq K$. \label{hyp:Fcontinuous}
\item There exist constants $\consp,\consz\geq 0$ and $0<\gamma \leq \Gamma$ such that
\begin{multline*}
\qquad \quad \puccisub{\gamma}{\Gamma}(M-N) - \consp |p-q| - \consz |z-w| \leq F(M,p,z,x) - F(N,q,w,x) \\ 
\leq \Puccisub{\gamma}{\Gamma}(M-N) + \consp |p-q| + \consz|z-w|
\end{multline*}
for all $M,N\in \Sy$, $p,q\in R^n$, $z,w \in \R$, $x\in \Omega$.\label{hyp:Felliptic}
\item $F$ is positively homogeneous of order one, jointly in its first three arguments; i.e.,
\begin{equation*}
F(tM,tp,tz,x) = tF(M,p,z,x)\quad \mbox{for all} \quad t\geq 0
\end{equation*}
and all $M\in \Sy$, $p\in \R^n$, $z\in \R$, $x\in \Omega$.\label{hyp:Fhomogeneous}
\end{enumerate}
\renewcommand{\labelenumi}{(\roman{enumi})}
Hypothesis \HYP{Felliptic} implies $F$ is uniformly elliptic in the familiar sense that
\begin{equation*}
- \gamma \trace (N) \geq F(M+N,p,z,x) - F(M,p,z,x).
\end{equation*}
for every nonnegative definite matrix $N\in \Sy$. (In particular, we adopt the convention that $-\Delta$ and not $\Delta$ is an elliptic operator.)

\medskip

Consider a family $\{ L^{\alpha,\beta}\}$ of linear, uniformly elliptic operators
\begin{equation*}
L^{\alpha,\beta} u = - a^{ij}_{\alpha,\beta} u_{ij} + b^j_{\alpha,\beta} u_j + c_{\alpha,\beta} u
\end{equation*}
indexed by parameters $\alpha \in A$ and $\beta \in B$. The nonlinear operator defined by
\begin{equation}\label{eq:BIoperator}
F(D^2u,Du,u,x) = \inf_{\alpha \in A} \sup_{\beta\in B} \ L^{\alpha,\beta} u
\end{equation}
is called the \emph{Bellman-Isaacs} operator, which occurs naturally in the theory of stochastic differential games (see \cite{Fleming:1989,Friedman:Book2006}). The operator $F$ given by \EQ{BIoperator} satisfies our hypotheses \HYP{Fcontinuous}, \HYP{Felliptic}, and \HYP{Fhomogeneous}, provided the coefficients $a^{ij}_{\alpha,\beta}$ are bounded in $C^{\nu}(\Omega)$ as well as uniformly elliptic, uniformly in the parameters, and the families $\{ b^j_{\alpha,\beta} \}$ and $\{ c_{\alpha,\beta} \}$ are uniformly bounded and equicontinuous. 

\medskip

All differential equations and inequalities appearing in this paper are assumed to be satisfied in the viscosity sense. We now briefly recall the notion of viscosity solutions.
\begin{definition}
Assume $f\in C(\Omega)$. We say that $u\in C(\Omega)$ is a \emph{viscosity subsolution (supersolution)} of the equation
\begin{equation}\label{eq:defviscsol}
F(D^2u,Du,u,x) = f \inOmega
\end{equation}
if, for every $x_0\in \Omega$ and every $\varphi\in C^2(\Omega)$ for which
\begin{equation*} 
x \mapsto u(x) - \varphi(x) \quad \mbox{has a local maximum (minimum) at} \quad x_0,
\end{equation*}
we have
\begin{equation*}
F(D^2\varphi(x_0),D\varphi(x_0),u(x_0),x_0) \leq \ (\geq)\ f(x_0).
\end{equation*}
We say that $u$ is a \emph{viscosity solution} of \EQ{defviscsol} if it is both a viscosity subsolution and supersolution of \EQ{defviscsol}.
\end{definition}
See \cite{Caffarelli:Book,UsersGuide} for an introduction to the notion of viscosity solutions. In contrast to the papers \cite{Birindelli:2006,Quaas:2008}, we do not allow our nonlinear operator $F$ to be only measurable in $x$. However, many of our results can be generalized to this case, provided that $F$ is convex (or concave) in its first argument. This assumption allows us to make use of $W^{2,p}$ estimates for $L^p$-viscosity solutions (see Winter \cite{Winter:2008}, who has recently extended the interior estimates of Caffarelli \cite{Caffarelli:1989} to the boundary), which we need in order to dispense with certain technicalities arising in the measurable case. See \cite{Caffarelli:1996,Crandall:1999,Crandall:2000,Quaas:2008,Winter:2008} for details.

\medskip

It is well known (for example, see \cite{Donsker:1975}) that the principal eigenvalue $\lambda_1(L,\Omega)$ of a linear elliptic operator $L$ in $\Omega$ can be expressed by the max-min formula
\begin{equation*}
\lambda_1(L,\Omega) = \sup_{\varphi > 0} \inf_{x\in \Omega} \frac{(L\varphi)(x)}{\varphi(x)},
\end{equation*}
where the supremum is taken over all positive functions $\varphi \in C^2(\bar{\Omega})$. With this in mind, and following \cite{Berestycki:1994} and \cite{Quaas:2008}, we define the constants
\begin{equation}
\lambda^+_1(F,\Omega) =  \sup \left\{  \rho \, : \, \exists v\in C(\Omega), v > 0 \mbox{ and } F(D^2v,Dv,v,x) \geq \rho v \mbox{ in } \Omega \right\},
\end{equation}
and
\begin{equation}
\lambda^-_1(F,\Omega) =  \sup \left\{ \rho \, : \, \exists v\in C(\Omega), v < 0 \mbox{ and } F(D^2v,Dv,v,x) \leq \rho v \mbox{ in } \Omega \right\}.
\end{equation}
Then $\lambda^+_1(F,\Omega)$ and $\lambda^-_1(F,\Omega)$ are the principal half-eigenvalues of $F$ in $\Omega$:

\begin{thm}[Ishii and Yoshimura \cite{Ishii:preprint}]\label{thm:eigenvalues}
There exist functions $\varphi^+_1,\varphi^-_1 \in C^{1,\alpha}(\Omega)$ such that $\varphi^+_1 > 0$ and $\varphi^-_1<0$ in $\Omega$, and which satisfy
\begin{equation}\label{eq:Feigenvalues}
\left\{ \begin{aligned}
F(D^2\varphi_1^+,D\varphi_1^+,\varphi_1^+,x) = {}& \lambda_1^+(F,\Omega) \varphi_1^+ & \mbox{in} & \quad \Omega \\
F(D^2\varphi_1^-,D\varphi_1^-,\varphi_1^-,x) = {}& \lambda_1^-(F,\Omega) \varphi_1^- & \mbox{in} & \quad \Omega \\
\varphi_1^+ = \varphi^-_1 = {}& 0 & \mbox{on} & \quad \partial \Omega. 
\end{aligned}\right.
\end{equation}
Moreover, the eigenvalue $\lambda_1^+(F,\Omega)$ ($\lambda_1^-(F,\Omega)$) is unique in the sense that if $\rho$ is another eigenvalue of $F$ in $\Omega$ associated with a nonnegative (nonpositive) eigenfunction, then $\rho=\lambda_1^+(F,\Omega)$ ($\rho=\lambda_1^-(F,\Omega)$); and is simple in the sense that if $\varphi \in C(\bar{\Omega})$ is a solution of \EQ{Feigenvalues} with $\varphi$ in place of $\varphi^+_1$ ($\varphi^-_1$), then $\varphi$ is a constant multiple of $\varphi_1^+$ ($\varphi_1^-$).
\end{thm}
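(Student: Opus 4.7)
\medskip

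\noindent\emph{Proof plan.} My strategy would be the classical Berestycki--Nirenberg--Varadhan approach, adapted to the nonlinear setting using the strong maximum principle and Hopf's lemma for viscosity solutions (the latter proved in \APP{hopf}). I will describe the construction of $\varphi_1^+$; the construction of $\varphi_1^-$ then follows by applying the same argument to the auxiliary operator $\tilde F(M,p,z,x) := -F(-M,-p,-z,x)$, which satisfies \HYP{Fcontinuous}--\HYP{Fhomogeneous}, and observing that positive eigenfunctions of $\tilde F$ with eigenvalue $\rho$ correspond via $u \mapsto -u$ to negative eigenfunctions of $F$ with the same eigenvalue, so that $\lambda_1^+(\tilde F,\Omega) = \lambda_1^-(F,\Omega)$.

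\medskip

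\noindent\emph{Existence of $\varphi_1^+$.} First I would show $\lambda_1^+(F,\Omega) < +\infty$ by comparison with a Pucci-type operator on a ball (taking $v$ to be a radial first eigenfunction of $\Pucci$ in a ball $B \supseteq \Omega$ and using \HYP{Felliptic} to compare $F(D^2 v,\ldots)$ with $\Pucci(D^2 v)$). Next, for each $\lambda < \lambda_1^+(F,\Omega)$ I would solve the Dirichlet problem
\begin{equation*}
F(D^2 u,Du,u,x) - \lambda u = -1 \inOmega, \qquad u = 0 \onOmega,
\end{equation*}
by Perron's method; the required comparison principle for $F-\lambda I$ on $\Omega$ when $\lambda < \lambda_1^+(F,\Omega)$ is the nonlinear analogue of the BNV characterization, proved by testing with the positive supersolution guaranteed by the definition of $\lambda_1^+$. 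The solution $u_\lambda$ is positive in $\Omega$ by the strong maximum principle. The interior $C^{1,\alpha}$ estimates of Caffarelli--Trudinger (using \HYP{Fcontinuous}) together with the ABP estimate give the normalized sequence $\varphi_\lambda := u_\lambda/\|u_\lambda\|_{L^\infty}$ precompactness in $C(\bar\Omega) \cap C^{1,\alpha}_{\mathrm{loc}}(\Omega)$. I would then show $\|u_\lambda\|_{L^\infty} \to \infty$ as $\lambda \nearrow \lambda_1^+(F,\Omega)$: otherwise, by the stability of viscosity solutions, some $u_\infty > 0$ would satisfy $F(D^2 u_\infty,\ldots) - \lambda_1^+ u_\infty = -1$, making $u_\infty$ a positive strict supersolution of $F - (\lambda_1^+ + \varepsilon)I$ for small $\varepsilon > 0$, contradicting the definition of $\lambda_1^+$. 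Passing to a subsequential limit then gives $\varphi_1^+ \in C(\bar\Omega) \cap C^{1,\alpha}(\Omega)$ with $\varphi_1^+ \geq 0$, $\varphi_1^+ \not\equiv 0$, $\varphi_1^+ = 0$ on $\partial\Omega$, and $F(D^2\varphi_1^+,D\varphi_1^+,\varphi_1^+,x) = \lambda_1^+(F,\Omega)\varphi_1^+$; the strong maximum principle upgrades this to $\varphi_1^+ > 0$ in $\Omega$.

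\medskip

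\noindent\emph{Uniqueness and simplicity.} Suppose $\psi > 0$ in $\Omega$ satisfies \EQ{Feigenvalues} with eigenvalue $\rho$ in place of $\lambda_1^+$. Since $\psi$ is itself a positive supersolution of $F - \rho I$, the definition of $\lambda_1^+$ gives $\rho \leq \lambda_1^+(F,\Omega)$. For the reverse inequality (and for simplicity), I would set
\begin{equation*}
t^* := \sup\{ t > 0 : t\varphi_1^+ \leq \psi \text{ in } \Omega\},
\end{equation*}
which is finite and strictly positive thanks to Hopf's lemma applied to both $\varphi_1^+$ and $\psi$. Writing $w := \psi - t^*\varphi_1^+ \geq 0$, hypothesis \HYP{Felliptic} gives
\begin{equation*}
\pucci(D^2 w) - \consp|Dw| - \consz w \leq F(D^2\psi,\ldots) - F(D^2(t^*\varphi_1^+),\ldots) = \rho w + (\rho-\lambda_1^+)t^*\varphi_1^+.
\end{equation*}
If $\rho < \lambda_1^+$, the right-hand side satisfies a strict reverse inequality, and the strong maximum principle applied to the resulting uniformly elliptic supersolution inequality (after absorbing the zeroth-order term) forces $w \equiv 0$, which contradicts $(\rho-\lambda_1^+)t^*\varphi_1^+ < 0$; hence $\rho = \lambda_1^+$. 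In the equality case $\rho = \lambda_1^+$, the same inequality becomes $\pucci(D^2 w) - \consp|Dw| - (\consz+\lambda_1^+)w \leq 0$, and by the definition of $t^*$ the function $w$ either vanishes at an interior point or has vanishing normal derivative at some boundary point; the strong maximum principle together with Hopf's lemma (\APP{hopf}) then force $w \equiv 0$, so $\psi = t^*\varphi_1^+$, proving simplicity.

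\medskip

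\noindent\emph{Main obstacle.} The most delicate step is the existence argument: showing that the comparison principle holds for $F - \lambda I$ when $\lambda < \lambda_1^+(F,\Omega)$ in the absence of a linear structure, and ensuring that the normalized sequence $\varphi_\lambda$ passes to a nontrivial limit. Both rely crucially on the homogeneity \HYP{Fhomogeneous}: it converts the presumptive positive supersolution furnished by the definition of $\lambda_1^+$ into a barrier suitable for a rescaling argument, and it underlies the blow-up dichotomy $\|u_\lambda\|_{L^\infty}\to\infty$. The strong maximum principle and Hopf's lemma in the viscosity setting, needed repeatedly in the uniqueness and simplicity arguments, are also nontrivial but are essentially standard and will be treated as known (with Hopf proved in the appendix).
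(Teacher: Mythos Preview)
Your approach is the classical BNV blow-up route (essentially the one in \cite{Quaas:2008,Birindelli:2007}), which differs from the paper's. The paper instead uses the Leray--Schauder Alternative: after reducing to $F$ proper, it defines the solution operator $\mathcal A(v)=u$ for $F(D^2u,\ldots)=v$, $u=0$ on $\partial\Omega$, and applies Schaeffer's theorem to $u\mapsto\lambda\mathcal A(u+h)$ on the cone $\{v\ge0\}$ with a fixed bump $h\ge0$, $h\not\equiv0$. Any $u=\lambda\mathcal A(u+h)$ is a positive supersolution at level $\lambda$, forcing $\lambda\le\lambda_1^+$; so for each $\varepsilon>0$ the fixed-point set for $0\le\lambda\le\lambda_1^++\varepsilon$ is unbounded, and normalizing yields $\varphi_1^+$ with $\lambda^*=\lambda_1^+=\mu^+$. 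Simplicity and uniqueness then follow in one line from the comparison-type result Theorem~\ref{thm:BNV}, proved beforehand as a standalone tool.

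The Leray--Schauder route sidesteps exactly the difficulty you flag in your ``Main obstacle'': you want to solve $G_\lambda u=f$ for each $\lambda<\lambda_1^+$ by Perron, but Perron needs a global supersolution, and the natural candidate $A\varphi_1^+$ is not yet available (the positive supersolution furnished by the definition of $\lambda_1^+$ need not dominate a constant near $\partial\Omega$). This can be repaired---iterate $u_{k+1}=\mathcal A(\lambda u_k+f)$ on the proper operator, or exhaust by subdomains as in \cite{Berestycki:1994}---but the paper's approach is cleaner precisely because it never solves the sub-eigenvalue Dirichlet problem before producing $\varphi_1^+$; indeed, Theorem~\ref{thm:DP-existence} is proved \emph{after} Theorem~\ref{thm:eigenvalues} in the paper. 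Two smaller points: with the sign convention here ($-\Delta$ elliptic, and $F(D^2u,\ldots)=\lambda u+f$ with $f\ge0$ giving $u\ge0$), your right-hand side should be $+1$ rather than $-1$ to obtain a positive $u_\lambda$; and your sliding argument for simplicity is essentially a reproof of Theorem~\ref{thm:BNV} (and tacitly uses Lemma~\ref{lem:u-v-subsolution} to subtract viscosity solutions), which the paper isolates once and then simply invokes.
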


We will present a simpler proof of \THM{eigenvalues} than appears in \cite{Ishii:preprint}, using many of the ideas in the recent papers \cite{Birindelli:2007,Quaas:2008}, and of course the methods in \cite{Berestycki:1994}. Using \THM{eigenvalues} it is simple to show using the Perron method, that the Dirichlet problem
\begin{equation}\label{eq:Dirichlet-problem}
\left\{ \begin{aligned}
F(D^2u,Du,u,x) = {}& \lambda u + f & \mbox{in} & \quad \Omega \\
u = {}& 0 & \mbox{on} & \quad \partial \Omega.
\end{aligned}\right.
\end{equation}
possesses solutions provided that the principal eigenvalues are positive, and for $f$ with a certain sign if only one principal eigenvalue is positive.

\begin{thm}[Ishii and Yoshimura \cite{Ishii:preprint}]\label{thm:DP-existence}
Assume $f\in C(\Omega) \cap L^p(\Omega)$ for some $p > n$.
\begin{enumerate}
\item If $f\geq 0$ and $\lambda < \lambda_1^+(F,\Omega)$, then the Dirichlet problem \EQ{Dirichlet-problem} has a unique nonnegative solution $u \in C(\bar{\Omega})$. Moreover, $u \in C^{1,\alpha}(\Omega)$. \label{enum:DP-existence-plus}
\item If $f \leq 0$ and $\lambda < \lambda_1^-(F,\Omega)$, then the Dirichlet problem \EQ{Dirichlet-problem} has a unique nonpositive solution $u\in C^{1,\alpha}(\Omega)$.
\label{enum:DP-existence-minus}
\item If $\lambda < \minboth$, then the Dirichlet problem \EQ{Dirichlet-problem} has a solution $u \in C^{1,\alpha}(\Omega)$. \label{enum:DP-existence-both}
\end{enumerate}
\end{thm}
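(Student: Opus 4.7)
The plan is to reduce all three parts to Perron's method for the shifted nonlinearity
\[
G(M,p,z,x) := F(M,p,z,x) - \lambda z,
\]
which still satisfies (F1)--(F3) (with $\delta_0$ replaced by $\delta_0+|\lambda|$). From the supremum characterizations of $\lambda_1^\pm$ one sees directly that $\lambda_1^\pm(G,\Omega) = \lambda_1^\pm(F,\Omega) - \lambda$, so the hypothesis in (i) (resp.\ (ii), (iii)) gives $\lambda_1^+(G,\Omega) > 0$ (resp.\ $\lambda_1^-(G,\Omega) > 0$, resp.\ both). I shall take as granted, from the preceding sections developed along the lines of \cite{Berestycki:1994}, that positivity of $\lambda_1^+(G,\Omega)$ (resp.\ $\lambda_1^-(G,\Omega)$) implies a comparison principle for $G$ on nonnegative (resp.\ nonpositive) sub/supersolutions, and that both together yield two-sided comparison.

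For part (i), the function $\underline{u}\equiv 0$ is a subsolution of $G(D^2u,Du,u,x)=f$ because $G(0,0,0,x) = F(0,0,0,x) = 0 \leq f$. To produce a supersolution bounded below away from zero---needed because $\varphi_1^+$ vanishes on $\partial\Omega$ and so on its own cannot dominate an $f$ nonzero near the boundary---I would exploit monotonicity of $\lambda_1^+$ in the domain: since $\lambda < \lambda_1^+(F,\Omega)$ and $\Omega' \mapsto \lambda_1^+(F,\Omega')$ is continuous from above under Hausdorff convergence, there is a smooth bounded domain $\Omega' \supsetneq \overline\Omega$ with $\lambda_1^+(F,\Omega') > \lambda$. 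The associated positive eigenfunction $v$ of $F$ on $\Omega'$ satisfies $v \geq c_0 > 0$ on $\overline\Omega$, and
\[
G(D^2 v, Dv, v, x) = \bigl(\lambda_1^+(F,\Omega') - \lambda\bigr) v \geq \bigl(\lambda_1^+(F,\Omega') - \lambda\bigr) c_0 > 0 \quad \mbox{in } \Omega.
\]
Choosing $t$ large enough that $t \bigl(\lambda_1^+(F,\Omega') - \lambda\bigr) c_0 \geq \|f\|_\infty$, the function $\bar u := tv$ is a classical supersolution of $G=f$ with $\bar u \geq 0 = \underline u$ on $\overline\Omega$. Standard boundary barriers together with Perron's method and the comparison for $G$ then yield a nonnegative solution $u\in C(\overline\Omega)$ of \EQ{Dirichlet-problem} with $0 \leq u \leq \bar u$; interior $C^{1,\alpha}$ regularity comes from the Caffarelli-type estimates for fully nonlinear equations with $L^p$ right-hand side ($p>n$), cf.\ \cite{Caffarelli:Book,Winter:2008}.

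Part (ii) follows from (i) applied to the conjugate operator $\widetilde F(M,p,z,x):=-F(-M,-p,-z,x)$ with right-hand side $-f$, which interchanges $\lambda_1^+$ and $\lambda_1^-$ and flips the sign of the solution. For part (iii), the same construction produces a positive supersolution $\bar u = t_+ v_+$ from the $\lambda_1^+$-side and a negative subsolution $\underline u = t_- v_-$ from the $\lambda_1^-$-side, with $\underline u \leq 0 \leq \bar u$ on $\overline\Omega$; two-sided comparison for $G$ (available because $\lambda_1^\pm(G,\Omega) > 0$) then permits Perron to produce a solution. For uniqueness in (i), if $u_1, u_2 \geq 0$ are two solutions, I consider the sliding parameter $t^* := \sup\{t\geq 0 : t u_2 \leq u_1 \mbox{ in } \Omega\}$. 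Hypothesis (F2) yields Pucci-type inequalities for $u_1 - t u_2$, and combining the strong maximum principle for the Pucci operator with Hopf's lemma (\APP{hopf}) and the positivity of $\lambda_1^+(G,\Omega)$ rules out $t^* < 1$; symmetry then forces $u_1 \equiv u_2$. Part (ii) is analogous.

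The principal obstacle is ensuring Perron's method is applicable, which requires the comparison principle for $G$ on the relevant sign class; this hinges on the positivity of the principal half-eigenvalues established in the preceding section. A secondary technical point, handled via the domain-enlargement trick above, is to construct a supersolution bounded below by a positive constant up to the boundary, since $\varphi_1^+$ itself vanishes on $\partial\Omega$ and hence cannot dominate a general right-hand side $f$ uniformly.
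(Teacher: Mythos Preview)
Your overall strategy---Perron's method with $0$ as subsolution and a scaled positive eigenfunction as supersolution---is the right idea, and matches the paper in spirit.  However, the domain-enlargement construction of the supersolution has two genuine technical gaps.

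First, the operator $F$ is only assumed to be defined on $\Sy\times\R^n\times\R\times\Omega$; nothing in \HYP{Fcontinuous}--\HYP{Fhomogeneous} lets you extend $F$ to a larger domain $\Omega'\supsetneq\overline\Omega$ and still have the hypotheses hold there, so the eigenfunction $v$ on $\Omega'$ is not available.  Second, the right-hand side $f$ lies in $C(\Omega)\cap L^p(\Omega)$ but need not be bounded (note $\Omega$ is open), so the step ``choose $t$ with $t(\lambda_1^+(F,\Omega')-\lambda)c_0\ge\|f\|_\infty$'' can fail.  Both obstacles stem from the fact that a supersolution bounded below on $\overline\Omega$ will not, in general, dominate an unbounded $f$.

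The paper circumvents both issues at once by a two-stage argument: first treat the case where $f$ has \emph{compact support} in $\Omega$, so that $0\le f\le A\lambda_1^+\varphi_1^+$ for some large $A$ (here $\varphi_1^+$ is strictly positive on the support of $f$, and $f$ is bounded there), making $A\varphi_1^+$ a valid supersolution inside $\Omega$ itself.  Then approximate a general $f$ by compactly supported $f_k$, obtain solutions $u_k$, and prove a uniform $L^\infty$ bound on $\{u_k\}$ by a normalization-and-blowup contradiction argument (if $\|u_k\|_\infty\to\infty$, the rescaled $v_k=u_k/\|u_k\|_\infty$ would converge to a nontrivial solution of the homogeneous problem, contradicting the maximum principle since $\lambda_1^+>0$).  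Passing to limits then yields the solution for general $f$.

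Finally, your sliding argument for uniqueness is more elaborate than needed: the paper simply invokes \COR{CP-onesided}, the one-sided comparison principle (itself a corollary of \THM{BNV}), which gives $u_1\le u_2$ and $u_2\le u_1$ directly when both are nonnegative solutions with $f\ge0$.
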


If $F$ is convex or concave, then the Dirichlet problem \EQ{Dirichlet-problem} in fact has \emph{unique} solutions, as was shown in \cite{Quaas:2008}, in the case that both principal eigenvalues are positive. For general homogeneous $F$, we do not know if the solutions obtained in \ENUM{DP-existence-both} are unique. However, in \SEC{Dirichlet-problem} we give a sufficient condition, discovered in \cite{Ishii:preprint}, from which we also recover uniqueness for convex operators.

\medskip

In contrast to the situation for linear operators, while no $\lambda$ satisfying
\begin{equation}\label{eq:lambda-in-the-middle}
\minboth < \lambda < \maxboth
\end{equation}
is an eigenvalue of $F$ in $\Omega$, neither for such $\lambda$ do we have general existence or uniqueness of solutions of \EQ{Dirichlet-problem}. See \SEC{anti-maximumprinciple} for some nonexistence results, and Sirakov \cite{Sirakov:preprint} for much more on the failure of existence and uniqueness of solutions of \EQ{Dirichlet-problem}, for $\lambda$ between the two half-eigenvalues.

\medskip

Our main results concern the existence and behavior of solutions of the Dirichlet problem \EQ{Dirichlet-problem} for
\begin{equation*}
\lambda > \maxboth.
\end{equation*}
Define
\begin{equation}\label{eq:def-lambda-2}
\lambda_2(F,\Omega) = \inf \left\{ \rho > \maxboth \, : \, \rho \mbox{ is an eigenvalue of } F \mbox{ in } \Omega \right\}.
\end{equation}
Notice the possibility that $\lambda_2(F,\Omega) = +\infty$. This can occur, for example, if $F$ is a linear operator which is not symmetric. However, as we will show in \LEM{lambda-2-is-eigenvalue}, $\lambda_2(F,\Omega)$ is an eigenvalue of $F$ in $\Omega$ provided it is finite. Moreover, $\lambda_2(F,\Omega) > \maxboth$. Employing the theory of Leray-Schauder degree, we will prove the following theorem. 

\begin{thm}\label{thm:existence-past-lambda-1}
Assume $f\in C(\Omega) \cap L^p(\Omega)$ for some $p > n$, and
\begin{equation*}
\maxboth < \lambda < \lambda_2(F,\Omega).
\end{equation*}
Then there exists a solution $u\in C^{1,\alpha}(\Omega)$ of the Dirichlet problem \EQ{Dirichlet-problem}.
\end{thm}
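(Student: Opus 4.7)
The plan is to apply Leray--Schauder degree theory to a fixed-point reformulation of \EQ{Dirichlet-problem}. First, fix $\mu > \consz$ so that the nonlinearity $(M,p,z,x) \mapsto F(M,p,z,x) + \mu z$ is proper. By the comparison principle for proper operators together with Perron's method, for every $g \in C(\bar\Omega)$ there exists a unique viscosity solution $u = K(g)$ of $F(D^2u,Du,u,x) + \mu u = g$ in $\Omega$ with zero boundary data; boundary $W^{2,p}$ estimates (cf.\ Winter \cite{Winter:2008}) together with $C^{1,\alpha}$ regularity up to the boundary imply that $K : C(\bar\Omega) \to C(\bar\Omega)$ is compact. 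Define the family of compact maps $T^h_\lambda(u) := K\bigl((\mu+\lambda)u + h\bigr)$, and observe that the fixed points of $T^h_\lambda$ correspond exactly to the viscosity solutions of $F(D^2u,Du,u,x) = \lambda u + h$ with $u = 0$ on $\partial\Omega$.

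The first substantive step is a uniform a priori bound: there exists $R > 0$ such that for every $t \in [0,1]$, every fixed point $u$ of $T^{tf}_\lambda$ satisfies $\|u\|_{L^\infty(\Omega)} < R$. I would argue by contradiction, via a sequence $(u_n,t_n)$ with $T^{t_n f}_\lambda(u_n) = u_n$ and $\|u_n\|_\infty \to \infty$. The normalizations $v_n := u_n/\|u_n\|_\infty$ then satisfy, using \HYP{Fhomogeneous},
\begin{equation*}
F(D^2 v_n, Dv_n, v_n, x) = \lambda v_n + \frac{t_n f}{\|u_n\|_\infty} \inOmega, \qquad v_n = 0 \onOmega.
\end{equation*}
Standard $C^{1,\alpha}$ estimates produce a subsequence converging uniformly to some $v \in C^{1,\alpha}(\bar\Omega)$ with $\|v\|_\infty = 1$, $v = 0$ on $\partial\Omega$, and $F(D^2 v, Dv, v, x) = \lambda v$ in $\Omega$. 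This makes $\lambda$ an eigenvalue of $F$ in $\Omega$, contradicting $\lambda \in (\maxboth,\lambda_2(F,\Omega))$ in view of \EQ{def-lambda-2}.

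Homotopy invariance of the Leray--Schauder degree applied to $t \mapsto T^{tf}_\lambda$ on the ball $B_R \subset C(\bar\Omega)$ therefore gives $\deg(I - T^f_\lambda, B_R, 0) = \deg(I - T^0_\lambda, B_R, 0)$, and it remains to show that this degree is nonzero. The same normalization argument shows that $0$ is the unique fixed point of $T^0_\lambda$, so the task is to compute the Leray--Schauder index of $0$ for the positively homogeneous map $T^0_\lambda$. To do this, I would start from a reference value $\lambda^* < \minboth$ and use global unique solvability from \THM{DP-existence}\ENUM{DP-existence-both}, together with a further homotopy of $\lambda^*$ toward $-\infty$, to conclude that $\deg(I - T^0_{\lambda^*}, B_R, 0) = +1$. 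One then continuously deforms the parameter upward from $\lambda^*$ to our given $\lambda$, crossing precisely the two principal half-eigenvalues $\lambda^+_1(F,\Omega)$ and $\lambda^-_1(F,\Omega)$. Using the simplicity and sign-constancy of $\varphi^\pm_1$ from \THM{eigenvalues}, a local bifurcation analysis at each crossing shows that the index jumps by $-1$ (rather than by $-2$ as in the full-eigenvalue linear theory), reflecting the half-eigenvalue phenomenon. The final value is therefore $-1$, which is nonzero and produces the desired solution; its $C^{1,\alpha}$ regularity follows from \HYP{Felliptic} and the $L^p$ integrability of $f$ via the usual regularity theory.

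The main obstacle is the index computation described in the previous paragraph. Reducing to $T^0_\lambda$ via the homotopy in the forcing term is cost-free, but the secondary homotopy in $\lambda$ must traverse $\lambda^\pm_1$, at which entire rays of fixed points (the multiples of $\varphi^\pm_1$) obstruct a naive application of homotopy invariance. Performing a careful local analysis at each half-eigenvalue to pin down the one-sided jump in the Leray--Schauder index, and verifying that the positive homogeneity of $F$ together with the simplicity assertions of \THM{eigenvalues} force a change of exactly $-1$ at each crossing, is where the crux of the argument lies.
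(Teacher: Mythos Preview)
Your fixed-point reformulation and the first homotopy (in the forcing term $tf$) are fine, and the a priori bound via normalization is correct. The gap is exactly where you locate it: the index jump across the half-eigenvalues. You assert that crossing each of $\lambda^\pm_1$ changes the index by $-1$, but this does not follow from simplicity and sign-constancy alone. For a positively homogeneous (genuinely nonlinear) compact map, the fixed-point set of $T^0_\lambda$ at $\lambda=\lambda^+_1$ is a \emph{half-line} $\{t\varphi^+_1:t\geq 0\}$, not a line, and there is no off-the-shelf Leray--Schauder formula for the index change in this situation. Making your claim rigorous would require a substantial additional construction (for example, an auxiliary homotopy that isolates the cone direction and an explicit degree computation there), and the degenerate case $\lambda^+_1(F,\Omega)=\lambda^-_1(F,\Omega)$ would need separate treatment. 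As written, the argument is a plausible outline but not a proof.

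The paper avoids this difficulty entirely by homotoping in a different direction. Rather than moving $\lambda$ across the half-eigenvalues with $F$ fixed, it deforms the \emph{operator} via $F_s=-s\Gamma\,\trace+(1-s)F$ to the Laplacian $-\Gamma\Delta$, while simultaneously sliding the parameter along a continuous path $\mu(s)$ chosen so that $\max\{\lambda^+_1(F_s,\Omega),\lambda^-_1(F_s,\Omega)\}<\mu(s)<\lambda_2(F_s,\Omega)$ for every $s\in[0,1]$ and $\mu(0)=\lambda$. The existence of such a path follows from the continuity of $s\mapsto\lambda^\pm_1(F_s,\Omega)$ and the lower semicontinuity of $s\mapsto\lambda_2(F_s,\Omega)$, which are proved first. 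Along this homotopy there are \emph{no} eigenvalue crossings, so your same normalization argument furnishes the a priori bound uniformly in $s$, and at the endpoint $s=1$ the degree is read off directly from linear Fredholm theory for $-\Gamma\Delta$ between its first and second Dirichlet eigenvalues, where the map is bijective and hence has degree $\pm 1$. Homotopy invariance then gives $\deg(\mathcal{B}_{f,0},W,0)=\pm 1$ with no bifurcation analysis required.
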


The proof of \THM{existence-past-lambda-1} is similar to degree-theoretic arguments employed in other contexts. See, for example, the papers \cite{Busca:2005,delPino:1989,delPino:1991}.

\medskip

Our last result concerns the behavior of certain of the solutions of \EQ{Dirichlet-problem}, which exist according to \THM{existence-past-lambda-1}. It is well known (see, e.g., \cite{UsersGuide}) that if $F$ is proper, then $F$ satisfies the comparison principle in any domain. In particular, if $F$ is proper and $u\in C(\bar{\Omega})$ is a solution of the problem
\begin{equation*}
\left\{ \begin{aligned}
F(D^2u,Du,u,x) \geq {}& 0 & \mbox{in} & \quad \Omega \\
u \geq {}& 0 & \mbox{on} & \quad \partial \Omega,
\end{aligned}\right.
\end{equation*}
then $u \geq 0$ in $\Omega$. If in addition $u \not\equiv 0$, then $u > 0$ in $\Omega$ by Hopf's Lemma. We will see later that the same conclusion holds if we replace the assumption that $F$ is proper with the less restrictive condition $\lambda^-_1(F,\Omega) > 0$. In contrast, we will demonstrate that the \emph{opposite} conclusion holds if $\lambda^+_1(F,\Omega) \leq \lambda^-_1(F,\Omega) = -\alpha < 0$, provided $\alpha > 0$ is sufficiently small: the function $u < 0$ in $\Omega$.

\begin{thm}\label{thm:AMP}
Let $p > n$, and suppose $f\in C(\Omega)\cap L^p(\Omega)$ is such that $f\not\equiv 0$.
\begin{enumerate}
\item \label{enum:AMP1} If $f \geq 0$, then there exists a small positive constant $\eta = \eta(f) >0$, such that if
\begin{equation*}
\lambda^+_1(F,\Omega) \leq \lambda_1^-(F,\Omega) < \lambda \leq \lambda_1^-(F,\Omega) + \eta,
\end{equation*}
then any solution $u\in C(\bar{\Omega})$ of \EQ{Dirichlet-problem} satisfies $u < 0$ in $\Omega$.
\item \label{enum:AMP2} If $f \leq 0$, then there exists a small positive constant $\eta=\eta(f) > 0$, such that if
\begin{equation*}
\lambda^-_1(F,\Omega) \leq \lambda^+_1(F,\Omega) < \lambda \leq \lambda^+_1(F,\Omega) + \eta,
\end{equation*}
then any solution $u\in C(\bar{\Omega})$ of \EQ{Dirichlet-problem} satisfies $u > 0$ in $\Omega$.
\end{enumerate}
\end{thm}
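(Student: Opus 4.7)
The plan begins with a duality reduction. The operator $\tilde F(M,p,z,x):=-F(-M,-p,-z,x)$ satisfies \HYP{Fcontinuous}--\HYP{Fhomogeneous} with $\lambda^{\pm}_1(\tilde F,\Omega)=\lambda^{\mp}_1(F,\Omega)$, and $u$ solves \EQ{Dirichlet-problem} for $(F,f,\lambda)$ if and only if $-u$ solves it for $(\tilde F,-f,\lambda)$. Under this change of variable, \ENUM{AMP2} for $(F,f)$ becomes \ENUM{AMP1} for $(\tilde F,-f)$, so it suffices to prove \ENUM{AMP1}.

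I argue by contradiction: negating \ENUM{AMP1} produces sequences $\lambda_k\downarrow\lambda^-_1(F,\Omega)$, solutions $u_k\in C(\bar\Omega)$ of \EQ{Dirichlet-problem} at $\lambda_k$, and points $x_k\in\Omega$ with $u_k(x_k)\geq 0$. The first stage is to show that $M_k:=\|u_k\|_{L^\infty(\Omega)}\to\infty$. If not, the global $C^{1,\alpha}(\bar\Omega)$ estimates (via the Caffarelli--Winter $W^{2,p}$ theory noted in \SEC{statement-of-results}) together with Arzel\`a--Ascoli yield a subsequential limit $u$ solving $F(D^2u,Du,u,x)=\lambda^-_1(F,\Omega)\,u+f$ in $\Omega$ with $u=0$ on $\partial\Omega$ and $u(x_0)\geq 0$ at an accumulation point $x_0$. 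Such a $u$ is ruled out by a sliding comparison: using the Hopf-type estimate for $\varphi^-_1$ from \APP{hopf}, the ray of nonpositive eigenfunctions $\{t\varphi^-_1:t\geq 0\}$ can be slid against $u$ down to the first contact value $t^*\geq 0$; at this configuration, the structure condition \HYP{Felliptic} and the strong maximum principle force $u\equiv t^*\varphi^-_1$, which on subtracting the eigenvalue equation for $t^*\varphi^-_1$ from the equation for $u$ gives $f\equiv 0$---contradicting $f\not\equiv 0$.

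With $M_k\to\infty$ established, set $v_k:=u_k/M_k$. Positive homogeneity \HYP{Fhomogeneous} gives
\begin{equation*}
F(D^2v_k,Dv_k,v_k,x)=\lambda_k v_k+f/M_k \inOmega,
\end{equation*}
with uniformly bounded right-hand side. The same $C^{1,\alpha}(\bar\Omega)$ compactness yields $v_k\to v$ in $C^1(\bar\Omega)$, where $v$ is a nontrivial ($\|v\|_\infty=1$) solution of the homogeneous eigenvalue equation $F(D^2v,Dv,v,x)=\lambda^-_1(F,\Omega)\,v$ with $v=0$ on $\partial\Omega$. By the simplicity assertion of \THM{eigenvalues}, $v$ is a nonzero scalar multiple of $\varphi^-_1$ (with, in the coincidence case $\lambda^-_1=\lambda^+_1$, an a priori possible $\varphi^+_1$-component). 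If the scalar multiplying $\varphi^-_1$ is strictly positive, then $v<0$ throughout $\Omega$ with $\partial v/\partial\nu>0$ on $\partial\Omega$ by Hopf, and uniform $C^1$-convergence then forces $v_k<0$ on all of $\Omega$ for large $k$, contradicting $v_k(x_k)\geq 0$.

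The principal obstacle is to determine this sign in the absence of a Fredholm adjoint eigenfunction. The plan is a refined version of the sliding argument from the first stage: if the scalar were nonpositive then $v\geq 0$, and comparing $u_k$ with scaled multiples $s_k\varphi^-_1$ at scales tuned so that $u_k-s_k\varphi^-_1$ first touches $0$ at an interior point produces, via \HYP{Felliptic} and the strong maximum principle, an inequality on $f$ that is incompatible with $f\geq 0,\,f\not\equiv 0$. The same mechanism rules out a $\varphi^+_1$-limit in the coincidence case $\lambda^-_1=\lambda^+_1$. The restriction $\lambda\leq\lambda^-_1(F,\Omega)+\eta(f)$, with $\eta(f)$ chosen small in terms of $\|f\|_{L^p}$ and the eigenfunctions of $F$, is precisely what makes the comparison step quantitative and uniform along the blow-up sequence.
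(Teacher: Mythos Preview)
Your overall architecture matches the paper's: argue by contradiction, show the solutions blow up, normalize, identify the limit $v$ as a signed eigenfunction, and rule out both signs. The duality reduction of \ENUM{AMP2} to \ENUM{AMP1} is correct, and your treatment of the case $v=\varphi_1^-<0$ via $C^1$-convergence and Hopf is equivalent to the paper's argument (the paper tracks a nonnegative local maximum $x_k$ of $u_k$, so $Dv_k(x_k)=0$, and passes to the limit to contradict Hopf at the boundary).

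The genuine gap is in ruling out $v=-\varphi_1^->0$. Your ``refined sliding argument'' comparing $u_k$ with $s_k\varphi_1^-$ cannot work as described: if $v>0$ then $u_k\approx M_k v$ is large and positive, while $s_k\varphi_1^-$ is negative for $s_k>0$ (so there is no contact), and for $s_k<0$ the function $s_k\varphi_1^-$ is \emph{not} a solution of any useful equation, because $F$ is only positively homogeneous, not odd. There is no comparison to run. The paper instead argues as follows: since $v_k\to v>0$ uniformly, one has $v_k>0$ on a large compact $K\subset\Omega$; choosing $K$ so that $|\Omega\setminus K|$ is below the ABP small-domain threshold, the minimum principle for $G_{\lambda_k}$ on $\Omega\setminus K$ (using $v_k\geq 0$ on $\partial(\Omega\setminus K)$ and $G_{\lambda_k}[v_k]=f/M_k\geq 0$) forces $v_k\geq 0$ on all of $\Omega$, hence $u_k\geq 0$. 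One then invokes the nonexistence result \PROP{PDEnonexistence}: for $\lambda_k\geq\lambda_1^+(F,\Omega)$ there is no nonnegative supersolution of $F[u]\geq\lambda_k u+f$ with $f\geq 0$, $f\not\equiv 0$. Incidentally, your own $C^1$-plus-Hopf device from the other case would also close this gap: $v>0$ in $\Omega$ with $\partial v/\partial\nu<0$ on $\partial\Omega$, together with $v_k\to v$ in $C^1(\bar\Omega)$, already forces $v_k>0$ in $\Omega$ for large $k$, after which $u_k>0$ contradicts the definition of $\lambda_1^+$ directly since $\lambda_k>\lambda_1^-\geq\lambda_1^+$.

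Two smaller points. Your sliding argument for the boundedness step glosses over the case where the limit $u$ is nonnegative in $\Omega$: then the ray $\{t\varphi_1^-:t\geq 0\}$ never touches $u$ in the interior, and you must instead argue (as in \PROP{PDEnonexistence}) that $u>0$ forces $\lambda_1^-\leq\lambda_1^+$, hence equality, then simplicity. And your final sentence misreads the role of $\eta(f)$: it is not ``chosen small'' from an a priori estimate, but emerges from the contradiction argument itself.
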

A generalization of the well-known anti-maximum principle discovered by Cl\'{e}ment and Peletier~\cite{Clement:1979} in the linear case, \THM{AMP} is, to our knowledge, the first result of its kind for a wide class of fully nonlinear operators. (However, see Godoy, Gossez and Paczka \cite{Godoy:2002} for an anti-maximum principle for the $p$-Laplacian operator.) The proof of \THM{AMP} is based on an indirect argument due to Birindelli \cite{Birindelli:1995}.
 
\medskip

Our analysis in this paper makes use of $C^{1,\alpha}$ estimates for viscosity solutions of fully nonlinear equations (see Trudinger \cite{Trudinger:1988}, as well as \cite{Winter:2008}). However, if $F$ is convex (or concave) in $M$, and is sufficiently regular in $x$, then we may instead use the Evans-Krylov $C^{2,\alpha}$ estimates (see, for example, \cite{Caffarelli:Book, Gilbarg:Book}) to deduce that any solution $u\in C(\bar{\Omega})$ of the Dirichlet problem \EQ{Dirichlet-problem} is actually a classical solution $u\in C^{2,\alpha}(\Omega)$ provided that $f\in C^\alpha(\Omega)$. For example, the nonlinear operator 
\begin{equation*}
G(D^2u,Du,u,x) = \Pucci(D^2u) + g(Du,u,x)
\end{equation*}
has principal eigenfunctions belonging to the space $C^{2,\alpha}(\Omega)$, provided the function $g=g(p,z,x)$ is positively homogeneous in $(p,z)$ and Lipschitz in $(p,z,x)$. See Section 17.5 of \cite{Gilbarg:Book} for more details.



\section{Principal eigenvalues and the maximum principle} \label{sec:principal-eigenvalues}

In this section, we will explore the relationship between the maximum principle and positive viscosity supersolutions (and negative viscosity subsolutions) of the equation
\begin{equation*}
F(D^2u,Du,u,x) = 0 \inOmega. 
\end{equation*}
We will then prove \THM{eigenvalues}, as well as a few useful facts regarding $\lambda^+_1(F,\Omega)$ and $\lambda^-_1(F,\Omega)$.

\begin{definition}
We say the nonlinear operator $F$ \emph{satisfies the maximum principle in $\Omega$} if, whenever $v\in C(\bar{\Omega})$ is a subsolution of
\begin{equation*}
\left\{ \begin{aligned}
F(D^2v,Dv,v,x) \leq {}& 0 & \mbox{in} & \quad \Omega \\
v \leq {}& 0 & \mbox{on} & \quad \partial \Omega,
\end{aligned}\right.
\end{equation*}
we have $v\leq 0$ in $\Omega$. Similarly, we say $F$ \emph{satisfies the minimum principle in $\Omega$} if, for any $v\in C(\bar{\Omega})$ satisfying
\begin{equation*}
\left\{ \begin{aligned}
F(D^2v,Dv,v,x) \geq {}& 0 & \mbox{in} & \quad \Omega \\
v \geq {}& 0 & \mbox{on} & \quad \partial \Omega,
\end{aligned}\right.
\end{equation*}
we have $v\geq 0$ in $\Omega$. Finally, we say $F$ satisfies the \emph{comparison principle in $\Omega$} if, whenever $f\in C(\Omega)$ and $u,v \in C(\bar{\Omega})$ satisfy
\begin{equation}\label{eq:def-CP-1}
F(D^2u,Du,u,x) \leq f \leq F(D^2v,Dv,v,x) \inOmega
\end{equation}
as well as
\begin{equation}\label{eq:def-CP-2}
u \leq v \onOmega,
\end{equation}
we have $u \leq v$ in $\Omega$.
\end{definition}

As we mentioned earlier, the nonlinear operator $F$ satisfies the comparison principle in any domain provided it is proper (see \cite{UsersGuide}). We will require the following technical lemma.

\begin{lem}\label{lem:u-v-subsolution}
Assume $F$, $G$ and $H$ are nonlinearities which satisfy \HYP{Fcontinuous} and \HYP{Felliptic}, and
\begin{equation}\label{eq:H-geq-F-plus-G}
H(M+N,p+q,z+w,x) \leq F(M,p,z,x) + G(N,q,w,x)
\end{equation}
for all $M,N\in \Sy$, $p,q\in\R^n$, $z,w\in \R$, and $x\in \Omega$. Suppose $f\in C(\Omega)$ and $u\in C(\bar{\Omega})$ are such that $u$ is a subsolution of the equation
\begin{equation}\label{eq:F-and-u}
F(D^2u,Du,u,x) = f \inOmega,
\end{equation}
and $g\in C(\Omega)$ and $v \in C(\bar{\Omega})$ are such that $v$ is a subsolution of 
\begin{equation}\label{eq:G-and-v}
G(D^2v,Dv,v,x)= g \inOmega.
\end{equation}
Then the function $w:=u+v$ is a subsolution of the equation
\begin{equation}\label{eq:H-and-w}
H(D^2w,Dw,w,x) = f+g \inOmega.
\end{equation}
Likewise, if we reverse the inequality in \EQ{H-geq-F-plus-G} and assume that $u$ and $v$ are supersolutions of \EQ{F-and-u} and \EQ{G-and-v}, respectively, then $w$ is a supersolution of \EQ{H-and-w}.
\end{lem}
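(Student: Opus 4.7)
The plan is to use the classical doubling-of-variables technique together with the Crandall--Ishii theorem on sums, the standard viscosity-theoretic machinery for adding continuous sub/supersolutions. Since the hypotheses and conclusion are symmetric under the involution $(M,p,z)\mapsto(-M,-p,-z)$, the supersolution half follows from the subsolution half applied to $-u,-v$ and the operators $\widetilde F(M,p,z,x):=-F(-M,-p,-z,x)$, $\widetilde G$, $\widetilde H$, which satisfy \HYP{Fcontinuous}--\HYP{Felliptic} and the required reversed inequality. I therefore focus on the subsolution claim.

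Fix $x_0\in\Omega$ and $\varphi\in C^2(\Omega)$ such that $w-\varphi$ attains a local maximum at $x_0$; normalize $w(x_0)=\varphi(x_0)$, and after replacing $\varphi$ by $\varphi(x)+|x-x_0|^4$ assume the maximum is strict. For small $\epsilon>0$, introduce the penalized function
\begin{equation*}
 \Phi_\epsilon(x,y):=u(x)+v(y)-\varphi\bigl(\tfrac{x+y}{2}\bigr)-\tfrac{1}{2\epsilon}|x-y|^2
\end{equation*}
on a closed bidisc $\overline{B_r(x_0)}\times\overline{B_r(x_0)}\subset\Omega\times\Omega$. Standard arguments produce a maximizer $(x_\epsilon,y_\epsilon)$ with $(x_\epsilon,y_\epsilon)\to(x_0,x_0)$, $|x_\epsilon-y_\epsilon|^2/\epsilon\to 0$, and (for $\epsilon$ small) interior to the ball. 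Writing $z_\epsilon:=\tfrac12(x_\epsilon+y_\epsilon)$ and $p_\epsilon:=\tfrac{1}{\epsilon}(x_\epsilon-y_\epsilon)$, the theorem on sums yields matrices $X_\epsilon,Y_\epsilon\in\Sy$ such that $\bigl(\tfrac12 D\varphi(z_\epsilon)+p_\epsilon,X_\epsilon\bigr)$ and $\bigl(\tfrac12 D\varphi(z_\epsilon)-p_\epsilon,Y_\epsilon\bigr)$ are limiting second-order superjets of $u$ at $x_\epsilon$ and of $v$ at $y_\epsilon$, together with the matrix bound $X_\epsilon+Y_\epsilon\leq D^2\varphi(z_\epsilon)+o(1)$ as $\epsilon\to 0$.

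Applying the subsolution property of $u$ and $v$, using \HYP{Fcontinuous} to shift the $x$-argument of $G$ from $y_\epsilon$ to $x_\epsilon$, adding the two inequalities, and invoking the structural hypothesis \eqref{eq:H-geq-F-plus-G} gives
\begin{equation*}
 H\bigl(X_\epsilon+Y_\epsilon,\,D\varphi(z_\epsilon),\,u(x_\epsilon)+v(y_\epsilon),\,x_\epsilon\bigr)\leq f(x_\epsilon)+g(y_\epsilon)+o(1).
\end{equation*}
Uniform ellipticity \HYP{Felliptic} absorbs the matrix slack $X_\epsilon+Y_\epsilon-D^2\varphi(z_\epsilon)$ at the same $o(1)$ cost, and passing $\epsilon\to 0$ with the continuity of $H$, $f$, and $g$ yields the desired inequality $H(D^2\varphi(x_0),D\varphi(x_0),w(x_0),x_0)\leq f(x_0)+g(x_0)$.

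The main technical obstacle is controlling the base-point error created by shifting the last argument of $G$: the modulus in \HYP{Fcontinuous} involves $|Y_\epsilon|$, which may grow like $1/\epsilon$, so a bare quadratic penalty $|x-y|^2/(2\epsilon)$ is delicate. The fix is a careful choice of penalization, replacing $|x-y|^2/(2\epsilon)$ by $|x-y|^q/(q\epsilon)$ with $q=q(\nu)$ adapted to the Hölder exponent $\nu>\tfrac12$ in \HYP{Fcontinuous}, for which the corresponding sharper matrix bounds from the theorem on sums force the error to vanish. Once this bookkeeping is in place, the rest of the argument is a routine application of the continuity hypotheses.
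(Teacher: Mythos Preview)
Your approach via doubling of variables and the Crandall--Ishii theorem on sums is the standard route and can be made to work, but the paper takes a genuinely different and slicker path that avoids the penalization machinery altogether. The paper argues by contradiction: if a smooth $\varphi$ touches $w=u+v$ from above strictly at $x_0$ with $H(D^2\varphi(x_0),D\varphi(x_0),w(x_0),x_0)>f(x_0)+g(x_0)$, set $\tilde\varphi:=\varphi-v$. Testing $\tilde\varphi$ from below by any smooth $\psi$ at a nearby point $x_1$, one sees that $v-(\varphi-\psi)$ has a local maximum there; the subsolution property of $v$ for $G$ together with the structural inequality \eqref{eq:H-geq-F-plus-G} then forces $F(D^2\psi(x_1),D\psi(x_1),u(x_1),x_1)>f(x_1)$. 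Hence $\tilde\varphi$ is a strict viscosity supersolution of the \emph{proper} equation $\widetilde F(M,p,z,x):=F(M,p,u(x),x)=f$ in a small ball around $x_0$, and the comparison principle for proper operators (quoted as a black box from \cite{UsersGuide}) applied against the subsolution $u$ shows that $w-\varphi=u-\tilde\varphi$ cannot have a strict interior maximum---a contradiction. The virtue of this argument is that it hides the entire H\"older-in-$x$ difficulty you correctly flagged inside a single invocation of the comparison theorem, whereas your direct approach must confront that issue head-on (and the power-$q$ penalty fix you propose, while the right instinct, needs more care than you indicate to actually drive $|x_\epsilon-y_\epsilon|^\nu|Y_\epsilon|\to 0$). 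Since the cited comparison theorem is itself ultimately proved by a doubling argument of your type, the two proofs are morally equivalent; the paper's version simply isolates the delicate step as a known result rather than redoing it.
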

\begin{proof}
Select a test function $\varphi\in C^2(\Omega)$ such that
\begin{equation}\label{eq:u-v-subsolution-localmax}
x \mapsto w(x) - \varphi(x) \quad \mbox{has a strict local maximum at } x=x_0\in \Omega.
\end{equation}
We must show
\begin{equation*}
H(D^2\varphi(x_0),D\varphi(x_0), w(x_0),x_0) \leq f(x_0)+g(x_0).
\end{equation*}
Suppose on the contrary that
\begin{equation}\label{eq:u-v-subsolution-contradict}
H(D^2\varphi(x_0),D\varphi(x_0), w(x_0),x_0) -f(x_0) - g(x_0) > 0.
\end{equation}
Define $\tilde{\varphi} = \varphi - v$. We claim that if $\eta >0$ is sufficiently small, then $\tilde{\varphi}$ is a viscosity supersolution of
\begin{equation}\label{eq:viscosityconsistency}
F(D^2\tilde{\varphi},D\tilde{\varphi}, u(x),x) \geq f \quad \mbox{in} \quad B(x_0,\eta).
\end{equation}
Select a smooth test function $\psi$ such that
\begin{equation*}
x \mapsto \tilde{\varphi}(x) - \psi(x) \quad \mbox{has a local minimum at } x=x_1\in B(x_0,\eta).
\end{equation*}
Then
\begin{equation*}
x \mapsto v(x) - ( \varphi(x)-\psi(x)) \quad \mbox{has a local maximum at } x=x_1.
\end{equation*}
Since $v$ satisfies \EQ{G-and-v}, we deduce
\begin{equation*}
G(D^2\varphi(x_1)-D^2\psi(x_1),D\varphi(x_1)-D\psi(x_1),v(x_1),x_1) \leq g(x_1).
\end{equation*}
Using \EQ{H-geq-F-plus-G}, we deduce
\begin{equation*}
\begin{aligned}
F(D^2 \psi(x_1), D\psi(x_1), u(x_1),x_1) & \geq H(D^2\varphi(x_1),D\varphi(x_1),u(x_1)+v(x_1),x_1)\\
& \qquad -G(D^2\varphi(x_1)-D^2\psi(x_1),D\varphi(x_1)-D\psi(x_1),v(x_1),x_1) \\
& \geq H(D^2\varphi(x_1),D\varphi(x_1),v(x_1)+u(x_1),x_1) - g(x_1).
\end{aligned}
\end{equation*}
Recalling \EQ{u-v-subsolution-contradict}, by the continuity of $H$ we may choose $\eta >0$ sufficiently small such that
\begin{equation*}
H(D^2\varphi(y),D\varphi(y),v(y)-u(y),y) -f(y)-g(y) > 0
\end{equation*}
for every $y \in B(x_0,\eta)$. Thus
\begin{equation*}
F(D^2 \psi(x_1), D\psi(x_1), u(x_1),x_1) > f(x_1).
\end{equation*}
It follows that $\tilde{\varphi}$ is a viscosity supersolution of \EQ{viscosityconsistency}. Since the operator $\tilde{F}$ given by
\begin{equation*}
\tilde{F}(M,p,z,x) = F(M,p,u(x),x)
\end{equation*}
is proper, we may apply the comparison principle to deduce 
\begin{equation*}
\sup_{B(x_0,\eta)} (u - \tilde{\varphi}) \leq \sup_{\partial B(x_0,\eta)} (u - \tilde{\varphi}).
\end{equation*}
Therefore, the function $w-\varphi = u-\tilde{\varphi}$ does not have a strict local minimum at $x=x_0$, a contradiction to \EQ{u-v-subsolution-localmax}. This completes the proof of the first statement. The proof of the second statement is similar. 
\end{proof}

A sufficient condition for a linear elliptic operator $L$ to satisfy the maximum principle in $\Omega$ is the existence of a supersolution $u$ of the equation $Lu=0$ that is positive on $\bar{\Omega}$ (see, e.g., \cite{Berestycki:1994}). The next result is a generalization of this observation to fully nonlinear operators $F$ satisfying our hypotheses. In fact, it contains much more information. \THM{BNV} was first discovered in the linear case by Berestycki, Nirenberg and Varadhan~\cite{Berestycki:1994} and generalized to convex, homogeneous nonlinear operators in \cite{Quaas:2008}. It is a deep result which provides a connection between the maximum principle and principal eigenvalues of elliptic operators, and we will use it many times in this paper. Simple modifications of the argument found in \cite{Quaas:2008} allow us to drop the assumption that $F$ is convex in $(M,p,z)$, and permit us to consider nonzero $f$. (See also Theorem 5.3 of \cite{Ishii:preprint}.)

\begin{thm}\label{thm:BNV}
Suppose $u,v\in C(\bar{\Omega})$ and $f\in C(\Omega)$ satisfy
\begin{equation}\label{eq:teh}
F(D^2u, Du, u, x) \leq f \leq F(D^2v,Dv,v,x) \inOmega
\end{equation}
and that $u(\tilde{x}) > v(\tilde{x})$ for some $\tilde{x}\in \Omega$. Assume also that one of the following conditions holds:
\begin{enumerate}
\item \label{enum:teh-1}
$f \leq 0$ and $u < 0$ in $\Omega$, $v \geq 0$ on $\partial \Omega$,
\end{enumerate}
or
\begin{enumerate}
\addtocounter{enumi}{1}
\item \label{enum:teh-2}
$f \geq 0$ and $v > 0$ in $\Omega$, $u \leq 0$ on $\partial \Omega$.
\end{enumerate}
Then $v\equiv tu$ for some $t>0$.
\end{thm}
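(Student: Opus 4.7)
The plan is to prove case \ENUM{teh-2} directly and deduce case \ENUM{teh-1} by the symmetry $F \mapsto \tilde F$, where $\tilde F(M,p,z,x) := -F(-M,-p,-z,x)$: this involution preserves \HYP{Fcontinuous}--\HYP{Fhomogeneous}, swaps sub- and supersolutions, and transforms case \ENUM{teh-1} for $(F,u,v,f)$ into case \ENUM{teh-2} for $(\tilde F, -v, -u, -f)$.

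Assuming case \ENUM{teh-2}, the strict inequality $u(\tilde x) > v(\tilde x) > 0$ gives
\begin{equation*}
t^* := \sup_{x\in \Omega} \frac{u(x)}{v(x)} > 1.
\end{equation*}
To confirm $t^* < \infty$, note that the only boundary points $x_0 \in \partial \Omega$ at which $u/v$ can blow up are those with $u(x_0) = v(x_0) = 0$ (elsewhere either $v > 0$ in a neighborhood or $u < 0$ in a neighborhood, so $u/v$ is continuous or nonpositive there). At such an $x_0$, Hopf's Lemma (\APP{hopf}) applied to the positive supersolution $v$ gives $v(x) \geq c\,\dist(x, \partial \Omega)$ near $x_0$, while $C^{1,\alpha}$ boundary regularity for the viscosity subsolution $u$ gives $u(x) \leq C\,\dist(x, \partial \Omega)$ near $x_0$, keeping $u/v \leq C/c$.

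The core of the proof is to extract a Pucci-type inequality for $w := t^* v - u \geq 0$. By \HYP{Fhomogeneous}, $t^* v$ is a viscosity supersolution of $F(\cdot) = t^* f$, and $-u$ is a viscosity supersolution of $\tilde F(\cdot) = -f$. Hypothesis \HYP{Felliptic} yields
\begin{equation*}
F(M,p,z,x) + \tilde F(N,q,w,x) = F(M,p,z,x) - F(-N,-q,-w,x) \leq \Puccisub{\gamma}{\Gamma}(M+N) + \consp |p+q| + \consz |z+w|,
\end{equation*}
so, setting $H(M,p,z,x) := \Puccisub{\gamma}{\Gamma}(M) + \consp |p| + \consz |z|$ and applying the supersolution version of \LEM{u-v-subsolution}, I obtain
\begin{equation*}
\Puccisub{\gamma}{\Gamma}(D^2 w) + \consp |Dw| + \consz w \geq (t^* - 1) f \geq 0 \inOmega
\end{equation*}
in the viscosity sense, using $t^* > 1$, $f \geq 0$, and $w \geq 0$.

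The strong maximum principle for this Pucci-type operator then forces either $w \equiv 0$ in $\Omega$, in which case $v = (1/t^*) u$ yields the desired conclusion with $t = 1/t^* > 0$, or $w > 0$ throughout $\Omega$. In the latter case I would derive a contradiction to $t^* = \sup u/v$: any accumulation point $x_0 \in \partial \Omega$ of a maximizing sequence for $u/v$ must satisfy $u(x_0) = v(x_0) = w(x_0) = 0$, whereupon Hopf's Lemma applied to $w$ yields $w(x) \geq c'\,\dist(x,\partial \Omega)$ near $x_0$, while the Lipschitz bound $v(x) \leq C'\,\dist(x,\partial \Omega)$ yields $w/v \geq c'/C' > 0$ uniformly near $x_0$; combined with a strictly positive lower bound for $w/v$ on compact interior subsets, this forces $u/v = t^* - w/v \leq t^* - \varepsilon$ throughout $\Omega$, contradicting the definition of $t^*$. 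The main obstacle I anticipate is this boundary analysis: verifying the Pucci-type Hopf Lemma and the sharp linear boundary decay in the fully nonlinear viscosity framework, in order to reduce the problem to the interior strong maximum principle and the compatibility calculation powered by \LEM{u-v-subsolution}.
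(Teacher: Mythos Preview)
Your overall strategy---defining the critical parameter $t^* = \sup_\Omega u/v$, deriving the Pucci supersolution inequality for $w = t^* v - u$ via \LEM{u-v-subsolution}, and invoking the strong maximum principle---mirrors the paper's proof. The reduction of case \ENUM{teh-1} to case \ENUM{teh-2} via the involution $F \mapsto \tilde F$ is also fine. However, the boundary analysis you flag as the ``main obstacle'' is a genuine gap, and the tools you propose do not close it. The $C^{1,\alpha}$ estimates apply to viscosity \emph{solutions}, not to mere sub- or supersolutions, so you cannot invoke them for $u$. More seriously, the linear upper bound $v(x) \leq C'\,\dist(x,\partial\Omega)$ you need in the contradiction step simply fails for supersolutions: already in one dimension, $v(x) = x^{1/2}$ on $(0,1)$ is a positive viscosity supersolution of $-v'' = 0$ with $v(0) = 0$, yet $v(x)/\dist(x,\partial\Omega) \to \infty$ as $x \to 0$. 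Without this bound you cannot conclude $w/v$ is bounded below near the boundary, and the contradiction collapses.

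The paper avoids boundary regularity altogether by a different device: the Alexandrov--Bakelman--Pucci inequality implies that the Pucci-type operator $\pucci(D^2\cdot) - \consp|D\cdot| - \consz|\cdot|$ satisfies the maximum principle in any domain of sufficiently small measure. One fixes a compact $K \subset \Omega$ with $|\Omega \setminus K|$ below this threshold. For large $s$, the function $w_s = su - v$ (in case \ENUM{teh-1}) is negative on $K$ since $u < 0$ there; it is also nonpositive on $\partial\Omega$; so the small-domain maximum principle in $\Omega \setminus K$ forces $w_s \leq 0$ throughout, and Hopf's Lemma gives $w_s < 0$. This simultaneously yields finiteness of the infimal $t$ and, by the same reasoning applied at $t - \sigma$, the contradiction if $w_t \not\equiv 0$. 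The key missing idea in your proposal is this compact-set-plus-small-complement argument, which replaces all boundary regularity considerations.
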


\begin{proof}
We only provide a proof under the assumption \ENUM{teh-1} holds, since the proof assuming \ENUM{teh-2} is similar. By \HYP{Felliptic}, \HYP{Fhomogeneous}, and \LEM{u-v-subsolution}, for each $s \geq 1$ the function $w_s = su - v$ satisfies
\begin{equation}\label{eq:ws}
\pucci(D^2w_s) - \consp|Dw_s| - \consz |w_s| \leq sf - f \leq 0 \inOmega.
\end{equation}
We claim the function $w_s < 0$ in $\Omega$ for sufficiently large $s\geq 1$. To demonstrate this, we use a standard corollary of the Alexandrov-Bakelman-Pucci inequality (see Proposition 2.12 of \cite{Caffarelli:1996}), which is a comparison result for small domains: for any nonlinear operator $G$ which is continuous and satisfies \HYP{Felliptic}, there exists a constant $\kappa(G) > 0$ such that if $\Omega'\subseteq \Omega$ is an open subset of $\Omega$ so small that $| \Omega' | \leq \kappa(G)$, then the maximum and minimum principles hold for $G$ in $\Omega'$ (see \cite{Quaas:2008} for a proof). Now select a compact subset $K\subseteq \Omega$ so large that
\begin{equation*}
| \Omega \backslash K | \leq \kappa\left( \pucci(D^2\cdot) - \consp|D\cdot| - \consz| \cdot| \right).
\end{equation*}
We may choose $s\geq 1$ large enough that $w_s < 0$ on $K$. Owing to our hypotheses, $w_s \leq 0$ on $\partial \Omega$, and thus
\begin{equation*}
w_s \leq 0 \quad \mbox{on} \quad \partial (\Omega \backslash K).
\end{equation*}
According to our choice of $\kappa$, we may apply the maximum principle to deduce $w_s \leq 0$ in $\Omega\backslash K$ and hence in $\Omega$. Since $w_s \not\equiv 0$, Hopf's Lemma (\THM{hopf}) implies $w_s < 0$ in $\Omega$. Now define
\begin{equation}\label{eq:choiceoft}
t = \inf \{ s : w_s < 0 \mbox{ in } \Omega\}.
\end{equation}
Since $u(\tilde{x}) > v(\tilde{x})$ we have $t > 1$. We claim $w_t \equiv 0$. If not, then $w_t < 0$ in $\Omega$ by Hopf's Lemma. Then we may find a small number $0 < \sigma < t-1$ such that $w_{t-\sigma} < 0$ on $K$. Repeating the argument above, we discover $w_{t-\sigma} < 0$ in $\Omega$, in contradiction to \EQ{choiceoft}. Thus $w_t\equiv 0$, and so $v \equiv tu$.
\end{proof}

\begin{rem}\label{rem:BNV-half-homogeneous}
We may relax our hypothesis \HYP{Fhomogeneous} and still maintain either \ENUM{teh-1} or \ENUM{teh-2} of \THM{BNV}. Indeed, an inspection of the proof above reveals that for \THM{BNV}\ENUM{teh-1}, we need require only
\begin{equation*}
F(tM,tp,tz,x) \leq t F(M,p,z,x) \quad \mbox{for all}\quad t \geq 0,
\end{equation*}
and for \ENUM{teh-2} we need only the reverse of this inequality.
\end{rem}

\medskip

For each $\lambda \in \R$, define a nonlinear operator $G_\lambda$ by
\begin{equation*}
G_\lambda(M,p,z,x) = F(M,p,z,x) - \lambda z.
\end{equation*}
The following is an immediate consequence of \THM{BNV}.
\begin{cor}\label{cor:BNV}
If there exists a solution $v\in C(\bar{\Omega})$ of 
\begin{equation*}
F(D^2v,Dv,v,x) - \lambda v \geq (\leq) \ 0 \inOmega
\end{equation*}
for which $v > 0$ ($v < 0$) in $\Omega$ and $v\not\equiv 0$ on $\partial \Omega$, then $G_\lambda$ satisfies the maximum (minimum) principle in $\Omega$.
\end{cor}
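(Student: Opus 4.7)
The plan is to deduce the corollary directly from Theorem BNV, using the positive homogeneity of $F$ (hypothesis \HYP{Fhomogeneous}) to rescale the supersolution $v$ at will, and then use the boundary hypothesis $v \not\equiv 0$ on $\partial \Omega$ to force a contradiction with the conclusion of BNV. I will treat only the maximum-principle case (a positive supersolution yielding the maximum principle for $G_\lambda$); the minimum-principle case is entirely analogous, using part \ENUM{teh-1} of \THM{BNV} instead of part \ENUM{teh-2}.

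Assume, toward a contradiction, that $G_\lambda$ fails the maximum principle in $\Omega$: there exists $u \in C(\bar\Omega)$ with $G_\lambda(D^2u,Du,u,x) \leq 0$ in $\Omega$ and $u \leq 0$ on $\partial \Omega$, yet $u(\tilde x) > 0$ for some $\tilde x \in \Omega$. By \HYP{Fhomogeneous}, for every $\alpha > 0$ the function $\alpha v$ is again a positive supersolution of $G_\lambda(\cdot) = 0$ in $\Omega$, and $\alpha v \not\equiv 0$ on $\partial \Omega$. Choose $\alpha > 0$ small enough that $\alpha v(\tilde x) < u(\tilde x)$, which is possible since $v$ is bounded on $\bar\Omega$.

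Now apply \THM{BNV}\ENUM{teh-2} with $f \equiv 0$ and with $u$, $\alpha v$ in the roles of $u$, $v$: the inequalities $F(D^2u, Du, u, x) \leq 0 \leq F(D^2(\alpha v), D(\alpha v), \alpha v, x)$ hold in $\Omega$, $\alpha v > 0$ in $\Omega$, $u \leq 0$ on $\partial \Omega$, and $u(\tilde x) > \alpha v(\tilde x)$. The theorem yields $\alpha v \equiv t u$ in $\bar\Omega$ for some $t > 0$. Restricting to $\partial \Omega$ and using $u \leq 0$ there, we find $\alpha v \leq 0$ on $\partial \Omega$. Combined with $\alpha v \geq 0$ on $\partial \Omega$ (by continuity of $v$ and $v > 0$ in $\Omega$), this forces $\alpha v \equiv 0$ on $\partial \Omega$, contradicting the hypothesis that $v \not\equiv 0$ on $\partial \Omega$.

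There is essentially no hard step here—the argument is a short packaging exercise once \THM{BNV} is in hand. The only point to be careful about is choosing the correct alternative \ENUM{teh-1}/\ENUM{teh-2} in \THM{BNV} (sign of $f$, strict positivity of $v$ in $\Omega$, and sign of $u$ on $\partial \Omega$) and verifying that the rescaling $v \mapsto \alpha v$ preserves every hypothesis being used, both in \THM{BNV} and in the boundary condition $v \not\equiv 0$ on $\partial \Omega$ that drives the final contradiction.
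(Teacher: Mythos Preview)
Your argument is correct and is exactly the short contradiction the paper has in mind when it calls the corollary ``an immediate consequence'' of \THM{BNV}. One small slip: in the displayed inequalities you wrote $F$ where you mean $G_\lambda$ (the inequalities $F(D^2u,\ldots)\leq 0 \leq F(D^2(\alpha v),\ldots)$ are false as stated; what you have is $G_\lambda(D^2u,\ldots)\leq 0 \leq G_\lambda(D^2(\alpha v),\ldots)$, and \THM{BNV} is being applied with $G_\lambda$ in the role of $F$, which is legitimate since $G_\lambda$ satisfies \HYP{Fcontinuous}--\HYP{Fhomogeneous}). With that correction the proof is complete.
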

From \THM{BNV} we also obtain the following one-sided comparison principle.
\begin{cor}\label{cor:CP-onesided}
Assume $\lambda \in \R$ and $f\in C(\Omega)$ is such that $f\not \equiv 0$. Suppose $u,v\in C(\bar{\Omega})$ satisfy 
\begin{equation}\label{eq:DP-uniqueness-minus}
F(D^2u,Du,u,x) - \lambda u \leq f \leq F(D^2v,Dv,v,x) - \lambda v \inOmega
\end{equation}
and that $u \leq v$ on $\partial \Omega$. Assume also that either \emph{(i)} $f \leq 0$ and $u \leq 0$ in $\Omega$, or  \emph{(ii)} $f \geq 0$ and $v \geq 0$ in $\Omega$. Then $u \leq v$ in $\Omega$.
\end{cor}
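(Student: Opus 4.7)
My plan is to prove case (i) by contradiction (case (ii) is symmetric), by applying the scaling argument from the proof of \THM{BNV} to the positively homogeneous operator $G_\lambda(M,p,z,x) := F(M,p,z,x) - \lambda z$. Suppose on the contrary that $u(\tilde x) > v(\tilde x)$ for some $\tilde x \in \Omega$.

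I first upgrade the hypothesis $u \leq 0$ to strict negativity $u < 0$ in $\Omega$. Note that $G_\lambda$ is positively homogeneous and satisfies \HYP{Felliptic} with $\consz$ enlarged to $\consz + |\lambda|$, while $G_\lambda(0,0,0,x) = 0$ by \HYP{Fhomogeneous}. The subsolution inequality therefore implies
\[
\pucci(D^2 u) - \consp|Du| - (\consz + |\lambda|)|u| \leq G_\lambda(D^2u, Du, u, x) \leq f \leq 0 \inOmega.
\]
If $u$ attained the value $0$ at any interior point, the strong maximum principle for Pucci operators would force $u \equiv 0$; but then $0 = G_\lambda(0,0,0,x) \leq f \leq 0$ pointwise, contradicting $f \not\equiv 0$.

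Next I rerun the scaling argument from the proof of \THM{BNV}. For $s \geq 1$, set $w_s := su - v$. By \LEM{u-v-subsolution} applied to $G_\lambda$, together with positive homogeneity and $f \leq 0$,
\[
\pucci(D^2 w_s) - \consp|Dw_s| - (\consz + |\lambda|)|w_s| \leq (s-1)f \leq 0 \inOmega.
\]
On $\partial\Omega$ I have $w_s = (s-1)u + (u-v) \leq 0$, since $u \leq 0$ on $\bar\Omega$ and $u \leq v$ on $\partial\Omega$. The strict negativity of $u$ now lets me copy the compactness and small-domain argument of \THM{BNV} verbatim: for $s$ large enough $w_s < 0$ throughout $\Omega$, the threshold $t := \inf\{s \geq 1 : w_s < 0 \mbox{ in } \Omega\}$ satisfies $t > 1$ (since $w_1(\tilde x) > 0$), and $w_t \equiv 0$ by Hopf's lemma. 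Hence $v \equiv tu$ in $\Omega$.

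To close, I exploit $v = tu$ and the homogeneity of $G_\lambda$ to obtain $tG_\lambda(D^2 u, Du, u, x) = G_\lambda(D^2 v, Dv, v, x) \geq f$, whence $G_\lambda(D^2 u, Du, u, x) \geq f/t$. Combined with $G_\lambda(D^2 u, Du, u, x) \leq f$, this yields $f/t \leq f$ in $\Omega$. But $t > 1$ and $f \leq 0$ give $f/t \geq f$ pointwise, forcing $f \equiv 0$, the desired contradiction. The most delicate step, I expect, is the strict-negativity upgrade $u < 0$, which is indispensable for the compactness step in the scaling argument and is precisely where the assumption $f \not\equiv 0$ is genuinely used.
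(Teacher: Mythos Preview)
Your proof is correct and follows exactly the route the paper intends: the paper states \COR{CP-onesided} as an immediate consequence of \THM{BNV} without giving any details, and what you have written is precisely the natural filling-in of those details --- replace $F$ by $G_\lambda$, upgrade $u\leq 0$ to $u<0$ via Hopf's Lemma and $f\not\equiv 0$, rerun the scaling argument with the boundary check $w_s=(s-1)u+(u-v)\leq 0$, and finally use $v\equiv tu$ with $t>1$ together with $f\not\equiv 0$ to reach a contradiction. One small remark: your final step ``$G_\lambda(D^2u,\ldots)\geq f/t$ and $G_\lambda(D^2u,\ldots)\leq f$ imply $f/t\leq f$'' is written as if it were pointwise, but $u$ is only a viscosity sub/supersolution; to make it rigorous, test both inequalities against the constant function $\varphi\equiv 0$ for $w=u-u$ (equivalently, apply \LEM{u-v-subsolution}) to obtain $0\leq f-f/t$ in $\Omega$, which is what you need.
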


Define constants
\begin{align*}
& \mu^+(F,\Omega) = \sup \left\{ \rho \, : \,  G_\rho \mbox{ satisfies the maximum principle in } \Omega \right\}, \intertext{and}
& \mu^-(F,\Omega)  = \sup \left\{ \rho \, : \,  G_\rho \mbox{ satisfies the minimum principle in } \Omega \right\}.
\end{align*}

We will eventually show $\lambda^\pm_1(F,\Omega) = \mu^\pm(F,\Omega)$. The following lemma is the first step in this direction.

\begin{lem} \label{lem:lambda-leq-mu}
Let $\consz \geq 0$ be as in \HYP{Felliptic}. Then
\begin{equation}\label{eq:lambda-leq-mu}
-\consz \leq \lambda^{\pm}_1(F,\Omega) \leq \mu^{\pm} (F,\Omega) < \infty.
\end{equation}
\end{lem}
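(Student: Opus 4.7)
The plan is to verify the three inequalities separately.

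For the leftmost bound $-\delta_0 \leq \lambda_1^\pm(F,\Omega)$, I would test the defining suprema against the constant functions $v \equiv 1$ and $v \equiv -1$. Hypothesis \HYP{Fhomogeneous} forces $F(0,0,0,x) = 0$, and then \HYP{Felliptic} (applied with $M=N=0$, $p=q=0$, $w=0$) yields $F(0,0,1,x) \geq -\delta_0$ and $F(0,0,-1,x) \leq \delta_0$. Hence $v \equiv 1$ satisfies $F(D^2v,Dv,v,x) \geq -\delta_0 \, v$ in $\Omega$, placing $\rho = -\delta_0$ in the admissible set for $\lambda_1^+$; $v \equiv -1$ handles $\lambda_1^-$ in the same way.

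For the middle inequality $\lambda_1^+ \leq \mu^+$ (the minus version is symmetric), fix $\rho < \lambda_1^+(F,\Omega)$, pick $\rho' \in (\rho, \lambda_1^+)$, and by the defining sup select $v \in C(\Omega)$ with $v > 0$ and $F(D^2v, Dv, v, x) \geq \rho' v$, so that $G_\rho v \geq (\rho' - \rho)v > 0$ in $\Omega$. Suppose for contradiction that some $u \in C(\bar\Omega)$ satisfies $G_\rho u \leq 0$ in $\Omega$, $u \leq 0$ on $\partial\Omega$, and yet $u(\tilde x) > 0$ at some $\tilde x \in \Omega$. Using the positive homogeneity that $G_\rho$ inherits from $F$, rescale $v \mapsto \epsilon v$ with $\epsilon > 0$ so small that $u(\tilde x) > \epsilon v(\tilde x)$. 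Then \THM{BNV}\ENUM{teh-2} applied to $G_\rho$ with $f \equiv 0$ forces $\epsilon v \equiv tu$ for some $t > 0$, so $u$ is a positive multiple of $v$. But then homogeneity gives $G_\rho u \geq (\rho' - \rho)u > 0$ in $\Omega$, contradicting $G_\rho u \leq 0$. Therefore $G_\rho$ satisfies the maximum principle, and $\rho \leq \mu^+(F,\Omega)$; sending $\rho \nearrow \lambda_1^+(F,\Omega)$ completes the step.

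For the rightmost bound $\mu^\pm(F,\Omega) < \infty$, I would exhibit, for all sufficiently large $\rho$, a function violating the maximum principle for $G_\rho$. Take $w \in C^2(\bar\Omega)$ with $w > 0$ in $\Omega$, $w = 0$ on $\partial\Omega$, and $w(x) = \dist(x,\partial\Omega)^2$ in a boundary collar (smoothness of $\partial\Omega$ permits this). Near $\partial\Omega$, $Dw \to 0$ and $D^2w \to 2\, Dd \otimes Dd$, a rank-one positive semidefinite matrix with largest eigenvalue $2$; then \HYP{Felliptic} together with $F(0,0,0,x) = 0$ gives $F(D^2w,Dw,w,x) \leq \Pucci(D^2w) + \delta_1|Dw| + \delta_0 w \to -2\gamma < 0$, so $F - \rho w \leq 0$ holds automatically in the collar. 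In the compact interior region where $w$ is bounded below by some $c > 0$ and $F(D^2w,Dw,w,x)$ is bounded, $F \leq \rho w$ holds for $\rho$ large. Thus $w$ is a classical (hence viscosity) subsolution of $G_\rho w \leq 0$ with $w = 0$ on $\partial\Omega$ and $w > 0$ in $\Omega$, violating the maximum principle; using $-w$ handles $\mu^-$.

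The main obstacle is the middle inequality: \THM{BNV} is stated for $u, v \in C(\bar\Omega)$, while the definition of $\lambda_1^\pm$ allows only $v \in C(\Omega)$. A careful inspection of the proof of \THM{BNV} shows the boundary behavior of $v$ enters only through nonnegativity on $\partial\Omega$, which follows by continuity from $v > 0$ in $\Omega$; alternatively, one may run the argument on a slight interior exhaustion and pass to the limit.
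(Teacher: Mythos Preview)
Your argument for the first two inequalities is essentially the paper's. For $-\delta_0 \leq \lambda_1^\pm$ you both test constant functions; for $\lambda_1^+ \leq \mu^+$ you both invoke \THM{BNV}(ii) with $f=0$ to force the supersolution witness $v$ and the maximum-principle violator $u$ to be proportional, then derive a contradiction from the incompatible differential inequalities. The paper phrases the contradiction as ``$\rho_1 = \rho_2$'' after choosing $\mu^+ < \rho_1 < \rho_2 < \lambda_1^+$, while you phrase it as $G_\rho u \geq (\rho'-\rho)u > 0$ versus $G_\rho u \leq 0$; these are the same step. Both versions implicitly use that a continuous function cannot be simultaneously a viscosity subsolution of $G_\rho = 0$ and a strict supersolution of $G_\rho > 0$, which follows from \LEM{u-v-subsolution} applied to $u$ and $-u$ (yielding that $0$ is a subsolution of $\pucci(D^2\cdot) - \delta_1|D\cdot| - \delta_0|\cdot| = -(\rho'-\rho)u < 0$, tested against $\varphi \equiv 0$). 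You also correctly flag the $C(\Omega)$ versus $C(\bar\Omega)$ discrepancy between the definition of $\lambda_1^+$ and the hypotheses of \THM{BNV}; the paper does not comment on this.

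Your argument for $\mu^\pm < \infty$ is genuinely different and more elementary. The paper reduces to the proper case, solves the Dirichlet problem $F(D^2v,\ldots)=h$ for a compactly supported $h \leq 0$, invokes comparison and Hopf's Lemma to get $v < 0$, and then chooses $\rho_0$ so that $\rho_0 v \leq h$; this requires the existence and regularity theory for proper operators plus the strong maximum principle. You instead build an explicit classical barrier $w$ with $w = \dist(\cdot,\partial\Omega)^2$ near the boundary, observe that $\Pucci(D^2w) \to -2\gamma$ as $x \to \partial\Omega$ so the inequality $G_\rho w \leq 0$ holds in a collar for free, and handle the compact interior by taking $\rho$ large. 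This is self-contained and avoids solving any PDE. The paper's route, on the other hand, works verbatim for merely measurable $x$-dependence (where your smooth barrier construction would need more care), and yields a sharper observation: the threshold $\rho_0$ is tied to the solution of a Dirichlet problem rather than to crude geometric bounds.
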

\begin{proof}
To see that $-\consz \leq \lambda^{\pm}_1(F,\Omega)$, notice that for every $x\in \Omega$
\begin{equation*}
G_{-\consz}(0,0,-1,x) \leq 0 \leq G_{-\consz}(0,0,1,x).
\end{equation*}
We will now show
\begin{equation}\label{eq:lambda-leq-mu-1}
\lambda^+_1(F,\Omega) \leq \mu^+(F,\Omega).
\end{equation}
Suppose on the contrary $\mu^+(F,\Omega) < \rho_1 < \rho_2 < \lambda^+_1(F,\Omega)$. Then we may select a function $v_1\in C(\bar{\Omega})$ which satisfies
\begin{equation*}
F(D^2v_1,Dv_1,v_1,x) - \rho_1 v_1 \leq 0 \inOmega
\end{equation*}
and such that $v_1\leq 0$ on $\partial \Omega$ and $v_1 > 0$ somewhere in $\Omega$. We can also select $v_2\in C(\bar{\Omega})$ such that $v_2 > 0$ in $\Omega$ and $v_2$ satisfies
\begin{equation*}
F(D^2v_2,Dv_2,v_2,x) -\rho_2 v_2 \geq 0 \inOmega.
\end{equation*}
Since $-\rho_1 v_2 > -\rho_2v_2$, we may apply \THM{BNV} to deduce $v_2 \equiv tv_1$ for some $t>0$. This implies $\rho_1=\rho_2$, a contradiction which establishes \EQ{lambda-leq-mu-1}. The inequality $\lambda^-_1(F,\Omega) \leq \mu^-(F,\Omega)$ is demonstrated via a similar argument.

Finally, we will show the operator $G_\rho$ does not satisfy the minimum principle in $\Omega$ for all sufficiently large $\rho$. By replacing $F$ with $G_{-\consz}$, if necessary, we may assume $F$ is proper. Select a continuous function $h\leq 0$, $h\not\equiv 0$ with compact support in $\Omega$. Standard results (see, e.g., \cite{Crandall:1999,Crandall:2000,Winter:2008}) imply the existence of a solution $v\in C^{1,\alpha}(\Omega)$ of the Dirichlet problem
\begin{equation*}
\left\{ \begin{aligned}
F(D^2v,Dv,v,x) = {}& h & \mbox{in} & \quad \Omega \\
v = {}& 0 & \mbox{on} & \quad \partial \Omega.
\end{aligned}\right.
\end{equation*}
According to the comparison principle, $v \leq 0$ in $\Omega$. Since $h\not\equiv 0$, we have $v\not\equiv 0$. Thus $v < 0$ in $\Omega$ according to Hopf's Lemma. Since $h$ has compact support in $\Omega$ we may select a constant $\rho_0 > 0$ such that $\rho_0 v \leq h$. Therefore, $v$ is a supersolution of the PDE
\begin{equation*}
F(D^2v,Dv,v,x) - \rho_0 v \geq 0 \inOmega,
\end{equation*}
and so evidently the operator $G_\rho$ does not satisfy the minimum principle in $\Omega$, for any $\rho \geq \rho_0$. Thus $\mu^-(F,\Omega) \leq \rho_0$. Via a similar argument, we verify that $\mu^+(F,\Omega) < \infty$.
\end{proof}

From the proof of \LEM{lambda-leq-mu} we also deduce that
\begin{equation}\label{eq:lambda-mu-interval-subset-p}
( -\infty , \lambda^+_1(F,\Omega) ) \subseteq \left\{ \rho \, : \,  G_\rho \mbox{ satisfies the maximum principle in } \Omega \right\}\\
\end{equation}
and
\begin{equation}
\label{eq:lambda-mu-interval-subset-m}
( -\infty , \lambda^-_1(F,\Omega) ) \subseteq \left\{ \rho \, : \,  G_\rho \mbox{ satisfies the minimum principle in } \Omega \right\}.
\end{equation}
We will see later that we have equality in \EQ{lambda-mu-interval-subset-p} and \EQ{lambda-mu-interval-subset-m}.

\medskip

We are now ready to show our operator $F$ has two principal half-eigenvalues. Instead of invoking the Krein-Rutman theorem, we choose instead a proof based on the Leray-Schauder Alternative principle, which is also called Schaeffer's Fixed Point Theorem. For the reader's convenience, we will first state this result.

\begin{definition}
If $X$ and $Y$ are Banach spaces, we say a (possibly nonlinear) map $\mathcal{A}:X \to Y$ is \emph{compact} if, for each bounded subset $B \subseteq X$, the closure of the set $\{ \mathcal{A}(x) : x \in B \}$ is compact in $Y$.
\end{definition}

\begin{thm}[Leray-Schauder Alternative]\label{thm:leray-schauder-alt}
Suppose $X$ is a Banach space, and $C \subseteq X$ is a convex subset of $X$ such that $0 \in C$. Assume $\mathcal{A}:C \to C$ is a (possibly nonlinear) function which is compact and continuous. Then at least one of the following holds:
\begin{enumerate}
\item the set $\{ x \in C \, : \, x = \mu \mathcal{A}(x) \mbox{ for some } 0 < \mu < 1 \}$ is unbounded in $X$,
\end{enumerate}
or
\begin{enumerate}
\addtocounter{enumi}{1}
\item there exists $x\in C$ for which $x = \mathcal{A}(x)$.
\end{enumerate}
\end{thm}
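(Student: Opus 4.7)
The plan is to reduce this to Schauder's fixed point theorem by a standard radial retraction trick. Suppose the first alternative fails; then the set $S := \{x \in C : x = \mu \mathcal{A}(x) \text{ for some } 0 < \mu < 1\}$ is bounded, so I may choose $R > 0$ strictly larger than $\sup_{x \in S} \|x\|$. The objective is to produce a fixed point of $\mathcal{A}$, thereby establishing the second alternative.

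I would introduce the radial retraction $r : X \to \overline{B(0,R)}$ defined by $r(x) = x$ if $\|x\| \leq R$ and $r(x) = Rx/\|x\|$ otherwise, and set $\mathcal{B} := r \circ \mathcal{A}$. Because $0 \in C$ and $C$ is convex, for each $x \in C$ the image $r(x)$ lies on the segment $[0,x] \subseteq C$, so $r$ sends $C$ into the nonempty convex set $K := C \cap \overline{B(0,R)}$. Hence $\mathcal{B} : K \to K$ is continuous, and $\mathcal{B}(K)$ is precompact because $\mathcal{A}$ sends bounded sets to precompact sets and $r$ is continuous.

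Applying Schauder's theorem in its Tychonoff form (using Mazur's theorem to see that the closed convex hull of the compact set $\overline{\mathcal{B}(K)}$ remains compact and sits inside $\overline{K}$) yields a point $x_0$ with $x_0 = \mathcal{B}(x_0) = r(\mathcal{A}(x_0))$; since $r \circ \mathcal{A}$ takes values in $K$, in fact $x_0 \in K \subseteq C$. If $\|\mathcal{A}(x_0)\| \leq R$, then $r$ acts as the identity on $\mathcal{A}(x_0)$, so $x_0 = \mathcal{A}(x_0)$, giving conclusion (ii). Otherwise $\|\mathcal{A}(x_0)\| > R$, and $x_0 = \mu \mathcal{A}(x_0)$ with $\mu := R/\|\mathcal{A}(x_0)\| \in (0,1)$, placing $x_0 \in S$; but $\|x_0\| = R$ exceeds $\sup_{x \in S} \|x\|$, a contradiction.

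The principal technical hurdle is packaging Schauder's theorem in a form that accommodates a merely convex (not necessarily closed) set $C$. This is handled by passing to $\overline{K}$, invoking Mazur's theorem so that the closed convex hull of the compact image $\overline{\mathcal{B}(K)}$ remains compact and stays inside $\overline{K}$, and then observing that the fixed point produced by Schauder must lie in $K$ itself because it is a value of $\mathcal{B}$. Once this compactness bookkeeping is in place, the two-case analysis above delivers the dichotomy, and no further ingredients beyond the convexity of $C$, the compactness of $\mathcal{A}$, and the standard Schauder--Mazur machinery are needed.
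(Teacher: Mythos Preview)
The paper does not supply its own proof of this theorem; immediately after the statement it simply refers the reader to Theorem~5.4 on page~124 of Granas--Dugundji. Your argument is precisely the classical radial-retraction reduction to Schauder's theorem that one finds in that reference and elsewhere, so there is nothing to compare.

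One small remark on your handling of the non-closed convex set $C$: your final paragraph is a bit compressed. The issue is that $\mathcal{A}$ is only defined on $C$, so passing to $\overline{K}$ requires either an extension of $\mathcal{B}$ or an argument that never leaves $K$. The cleanest way to finish is to note that $L_0 := \mathrm{conv}\,\mathcal{B}(K) \subseteq K$ (convexity of $K$), that $\mathcal{B}$ maps $L_0$ into $\mathcal{B}(K) \subseteq L_0$, and then to run the finite-dimensional Schauder approximation directly on $L_0$: the approximate fixed points $x_\varepsilon$ all lie in $L_0 \subseteq K$ and satisfy $\|x_\varepsilon - \mathcal{B}(x_\varepsilon)\| < \varepsilon$, and any cluster point $x_0$ (which exists because $\overline{\mathcal{B}(K)}$ is compact) satisfies $x_0 \in \overline{\mathcal{B}(K)}$; since $x_0$ is a limit of points $\mathcal{B}(x_\varepsilon) + o(1)$ with $\mathcal{B}(x_\varepsilon) \in r(\mathcal{A}(K)) \subseteq K$, one still needs $x_0 \in C$ to evaluate $\mathcal{A}(x_0)$. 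In the paper's only application, $C$ is the closed cone of nonnegative functions in $C(\bar\Omega)$, so $K$ is closed and the difficulty evaporates; if you want the statement in full generality for merely convex $C$, you should spell out this step more carefully.
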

See Theorem 5.4 on page 124 of \cite{Granas:Book} for a proof of \THM{leray-schauder-alt}. The following argument is a straightforward adaptation of that found in Section~6.5.2 of \cite{Evans:Book}.

\begin{proof}[Proof of \THM{eigenvalues}]
We assume without loss of generality that $F$ is proper, since otherwise we may consider the operator $G_{-\consz}$ in place of $F$. For each $v\in C(\bar{\Omega})$, define $u=\mathcal{A}(v)$ to be the unique solution $u \in C^{1,\alpha}(\Omega)$ of the Dirichlet problem
\begin{equation*}
\left\{ \begin{aligned}
F(D^2u,Du,u,x) = {}& v & \mbox{in} & \quad \Omega \\
u = {}& 0 & \mbox{on} & \quad \partial \Omega.
\end{aligned}\right.
\end{equation*}
We claim
\begin{equation}\label{eq:eigenvalues-claim-1}
\mathcal{A}:C(\bar{\Omega}) \rightarrow C(\bar{\Omega}) \quad \mbox{is a continuous, compact operator.}
\end{equation}
Let $u_1 = \mathcal{A}(v_1)$ and $u_2 = \mathcal{A}(v_2)$ and notice the function $w = u_1 - u_2$ satisfies
\begin{equation*}
\pucci(D^2 w) - \consp|Dw| \leq v_1 - v_2 \quad \mbox{in} \quad \{ w > 0 \}.
\end{equation*}
The Alexandrov-Bakelman-Pucci inequality implies
\begin{equation*}
w \leq C \| v_1 - v_2\|_{L^n(\Omega)} \leq C \| v_1 - v_2 \|_{C(\bar{\Omega})}.
\end{equation*}
Reversing the roles of $u_1$ and $u_2$ we obtain
\begin{equation*}
\| u_1 - u_2\|_{C(\bar{\Omega})} \leq C \| v_1-v_2\|_{C(\bar{\Omega})}.
\end{equation*}
Moreover, the $C^{1,\alpha}$ estimates for uniformly elliptic equations (see \cite{Trudinger:1988} and \cite{Winter:2008}) imply 
\begin{equation*}
\| \mathcal{A}(v)\|_{C^{1,\alpha}(\Omega)} \leq C \left( \|\mathcal{A}(v)\|_{L^\infty(\Omega)} + \| v \|_{L^\infty(\Omega)}\right) \leq C \| v \|_{C(\bar{\Omega})}.
\end{equation*}
We have demonstrated \EQ{eigenvalues-claim-1}.

Let $C\subseteq X$ be the cone $C=\left\{ v\in C(\bar{\Omega})  :  v \geq 0 \right\}$ of nonnegative continuous functions on $\bar{\Omega}$. According to the maximum principle,
\begin{equation*}
\mathcal{A}:C\rightarrow C.
\end{equation*}
Select a nonzero $h\in C$ which has compact support in $\Omega$. We now claim
\begin{equation}\label{eq:eigenvalues-claim-2}
\mbox{if } u\in C \mbox{ satisfies } u = \lambda \mathcal{A}(u+h) \mbox{ then } \lambda \leq \lambda_1^+(F,\Omega).
\end{equation}
Suppose $u$ and $\lambda$ satisfy the hypothesis of \EQ{eigenvalues-claim-2}. According to Hopf's Lemma, $u> 0$ in $\Omega$. Thus
\begin{equation*}
\lambda \in  \{ \, \rho : \exists v\in C(\Omega) \mbox{ such that } v > 0 \mbox{ and } G_\rho(D^2v,Dv,v,x) \geq 0 \mbox{ in } \Omega\},
\end{equation*}
and so $\lambda \leq \lambda^+_1(F,\Omega)$. This confirms \EQ{eigenvalues-claim-2}.

\medskip

We now apply \THM{leray-schauder-alt} to deduce that the set 
\begin{equation*}
D_\varepsilon = \left\{ \, u \in C: \mbox{there exists } 0 \leq \lambda \leq \lambda_1^+(F,\Omega)  + \varepsilon \mbox{ such that } u = \lambda \mathcal{A}(u+\varepsilon h) \right\}
\end{equation*}
is unbounded in $C(\bar{\Omega})$, for every $\varepsilon > 0$. Therefore, we may select sequences $\{ u_\varepsilon\} \subseteq C$ and $\{ \lambda_\varepsilon  \} \subseteq [0,\lambda_1^+(F,\Omega)  + \varepsilon]$ such that $\|u_\varepsilon\|_{C(\bar{\Omega})} \geq 1$ and $u_\varepsilon = \lambda_\varepsilon \mathcal{A}(u_\varepsilon + \varepsilon h)$. Normalize by setting $v_\varepsilon = u_\varepsilon / \|u_\varepsilon\|_{C(\bar{\Omega})}$. Then
\begin{equation}\label{eq:vepsilon}
v_\varepsilon = \lambda_\varepsilon \mathcal{A}(v_\varepsilon+ \varepsilon h / \| u_\varepsilon\|_{C(\bar{\Omega})}).
\end{equation}
Since $\mathcal{A}$ is a compact map, we can find $\varphi^+_1\in C$ and $\lambda^* \in \left[ 0, \lambda_1^+(F,\Omega)\right]$ and a subsequence $\varepsilon_k \rightarrow 0$ such that $v_{\varepsilon_k} \rightarrow \varphi^+_1$ uniformly on $\bar{\Omega}$ and $\lambda_{\varepsilon_k} \rightarrow \lambda^*$, as $k \rightarrow \infty$. Passing to limits in \EQ{vepsilon}, we have
\begin{equation*}
\varphi^+_1= \lambda^* \mathcal{A}(\varphi^+_1).
\end{equation*}
Notice that $\| \varphi^+_1\|_{C(\bar{\Omega})} = \lim \|v_k\|_{C(\bar{\Omega})} = 1$, and so $\varphi^+_1 \not\equiv 0$. According to Hopf's Lemma, $\varphi^+_1 > 0$ in $\Omega$. It is now immediate from the definitions of $\mu^+(F,\Omega)$ and $\lambda^+_1(F,\Omega)$ that
\begin{equation*}
\mu^+(F,\Omega) \leq \lambda^* \leq \lambda_1^+(F,\Omega),
\end{equation*}
and therefore $\lambda^* = \mu^+(F,\Omega) = \lambda_1^+(F,\Omega)$, by \LEM{lambda-leq-mu}. In fact, we have equality in \EQ{lambda-mu-interval-subset-p} and \EQ{lambda-mu-interval-subset-m}.

It is evident that the function $\varphi^+_1$ satisfies \EQ{Feigenvalues}. The simplicity and uniqueness of the eigenvalue $\lambda_1^+(F,\Omega)$ are immediately obtained from \THM{BNV}. This completes the proof of all assertions for $\lambda_1^+(F,\Omega)$ and $\varphi^+_1$. Making appropriate modifications to our arguments, we obtain the corresponding assertions for $\lambda^-_1(F,\Omega)$ and $\varphi^-_1$.
\end{proof}

We henceforth adopt the normalization
\begin{equation}\label{eq:max-eigenfunctions-equal-1}
\sup_\Omega \varphi^+_1 = \sup_\Omega \left(- \varphi^-_1\right) = 1.
\end{equation}
We will sometimes write $\varphi^\pm_1 = \varphi^\pm_1(F,\Omega)$ for clarity.
The $C^{1,\alpha}$ estimates now assert that
\begin{equation}\label{eq:eigenfunction-estimates}
\| \varphi^{\pm}_1 \|_{C^{1,\alpha}(\Omega)} \leq C,
\end{equation}
where the constant $C$ depends only on $\Omega,\gamma,\Gamma,\consp,\consz,n,p$ and $\lambda^\pm_1(F,\Omega)$. According to \COR{BNV}, 
\begin{equation}
\Omega' \subsetneq \Omega \quad\mbox{implies that} \quad \lambda^\pm_1(F,\Omega) < \lambda^\pm_1(F,\Omega'),
\end{equation}
since the positive (negative) principal eigenfunction for $F$ in $\Omega$ must be positive (negative) somewhere on $\partial \Omega'$.

\begin{exam}
As a consequence of \THM{BNV}, if $\lambda_1^+(F,\Omega)=\lambda_1^-(F,\Omega)$, then the positive and negative principal eigenfunctions are proportional: $\varphi^+_1 \equiv - \varphi^-_1$. However, the converse is not true. For example, consider the operator
\begin{equation*}
G(D^2u) = \min \{ -\Delta u, -2\Delta u \}. 
\end{equation*}
It is simple to see that
\begin{equation*}
\lambda^+_1(G,\Omega) = \lambda_1(-\Delta,\Omega) < 2\lambda_1(-\Delta,\Omega) = \lambda_1^-(G,\Omega),
\end{equation*}
but $\varphi^+_1 \equiv - \varphi^-_1$, since the principal eigenfunction of the Laplacian $-\Delta$ in $\Omega$ is proportional to both of the principal eigenfunctions of $G$.

For an example of an operator $F$ for which $\varphi^+_1(F,\Omega) \not\equiv - \varphi^-_1(F,\Omega)$, take $F = \pucci$. A straightforward calculation convinces us that if $\varphi^+_1 \equiv - \varphi^-_1$, then $\varphi^+_1$ is both concave and equal to the principal eigenfunction of $-\Delta$ in $\Omega$, which is impossible. For details, see \cite{Quaas:2008}.
\end{exam}

\begin{exam}
Consider the Bellman operator
\begin{equation}\label{eq:bellman-operator}
H(D^2u,Du,u,x) = \sup_{\alpha \in A} L^\alpha u,
\end{equation}
where $\{ L^\alpha : \alpha \in A\}$ is a family of linear operators having the form
\begin{equation*}
L^\alpha u = -a^{ij}_\alpha u_{ij} + b^j_\alpha u_j + c_\alpha u,
\end{equation*} 
with sufficient hypotheses on the coefficients such that $H$ satisfies \HYP{Fcontinuous}, \HYP{Felliptic}, and \HYP{Fhomogeneous}. It is immediate that
\begin{equation}\label{eq:bellman-eig-inequalities}
\lambda^-_1(H,\Omega) \leq \inf_{\alpha \in A} \lambda_1(L^\alpha,\Omega) \leq \sup_{\alpha \in A} \lambda_1(L^\alpha,\Omega) \leq \lambda^+_1(H,\Omega).
\end{equation}
In general, we cannot expect equality in either the first or the last inequality of \EQ{bellman-eig-inequalities}. For an explicit example, let $A = \{ 1,2\}$ and $L^1$ and $L^2$ be linear elliptic operators with smooth coefficients such that $\lambda_1(L^1,\Omega) = \lambda_1(L^2,\Omega)=:\lambda$, but $\varphi_1:= \varphi^+_1(L^1,\Omega) \not\equiv \varphi^+_1(L^2,\Omega) =:\varphi_2$. We claim
\begin{equation*}
\lambda^-_1(H,\Omega) < \lambda < \lambda^+_1(H,\Omega).
\end{equation*}
If, on the contrary $\lambda^-_1(H,\Omega) = \lambda$, then we may appeal to \THM{BNV} to deduce that $-\varphi_1 \equiv \varphi^-_1(H,\Omega) \equiv -\varphi_2$, in violation of our assumption that $\varphi_1\not\equiv \varphi_2$.
Similarly, if $\lambda^+_1(H,\Omega) = \lambda$, then $\varphi_1 \equiv \varphi^+_1(H,\Omega) \equiv \varphi_2$. 

This observation corrects a mistake in \cite[Theorem 1.1]{Busca:1999}, which incorrectly implies that the first inequality in \EQ{bellman-eig-inequalities} is an equality in general. (This error was also repeated in \cite{Quaas:2008}.) 
\end{exam}

\medskip

We now show that $F$ has no eigenvalues between $\lambda_1^+(F,\Omega)$ and $\lambda_1^-(F,\Omega)$, or less than $\minboth$. In the case that $F$ is convex (or concave) in $(M,p,z)$, this was demonstrated previously in \cite{Quaas:2008}.

\begin{lem}\label{lem:no-eigenvalues-in-between}
Suppose $\rho < \maxboth$ is an eigenvalue of $F$ in $\Omega$. Then $\rho = \minboth$.
\end{lem}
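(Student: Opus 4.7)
The plan is a direct appeal to the maximum (or minimum) principle coupled with the uniqueness clause of \THM{eigenvalues}. Without loss of generality, assume $\maxboth = \lambda^+_1(F,\Omega)$, so that $\rho < \lambda^+_1(F,\Omega)$ and $\minboth = \lambda^-_1(F,\Omega)$; the opposite case (in which the maximum is attained by $\lambda^-_1(F,\Omega)$) is handled by an identical argument with the roles of $+$ and $-$ interchanged and the minimum principle used in place of the maximum principle.

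Let $u \in C(\bar{\Omega})$ be a nontrivial eigenfunction of $F$ corresponding to the eigenvalue $\rho$, so that $F(D^2u,Du,u,x) = \rho u$ in $\Omega$ and $u=0$ on $\partial\Omega$. At the end of the proof of \THM{eigenvalues} we obtained the identification $\mu^+(F,\Omega) = \lambda^+_1(F,\Omega)$ together with equality in \EQ{lambda-mu-interval-subset-p}, so the strict inequality $\rho < \lambda^+_1(F,\Omega)$ guarantees that the operator $G_\rho$ satisfies the maximum principle in $\Omega$. In particular, $u$ is a subsolution of $G_\rho u \leq 0$ in $\Omega$ with $u \leq 0$ on $\partial\Omega$, and the maximum principle forces $u \leq 0$ in $\Omega$.

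Since $u \not\equiv 0$, the function $u$ is a nontrivial nonpositive eigenfunction of $F$ corresponding to $\rho$. The uniqueness clause of \THM{eigenvalues}, which identifies every eigenvalue admitting a nonpositive eigenfunction with $\lambda^-_1(F,\Omega)$, then yields $\rho = \lambda^-_1(F,\Omega) = \minboth$, as required.

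There is essentially no substantive obstacle here: the entire argument rests on having the maximum principle available throughout the open interval $(-\infty,\lambda^+_1(F,\Omega))$, which is precisely what the equality in \EQ{lambda-mu-interval-subset-p} provides. Once that is in hand, the conclusion follows in one line from the simplicity/uniqueness already established in this section.
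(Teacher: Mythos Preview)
Your proof is correct and follows essentially the same approach as the paper: use the maximum/minimum principle for $G_\rho$ on the interval below the relevant principal eigenvalue to force the eigenfunction to have a sign, then invoke the uniqueness clause of \THM{eigenvalues}. The only cosmetic differences are that the paper treats the two cases $\rho<\lambda_1^-(F,\Omega)$ and $\rho<\lambda_1^+(F,\Omega)$ separately rather than reducing by symmetry, and it inserts an appeal to Hopf's Lemma (which is not strictly needed given how the uniqueness clause in \THM{eigenvalues} is stated).
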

\begin{proof}
Suppose that $w \in C(\bar{\Omega})$ is a nontrivial solution of the problem
\begin{equation*}
\left\{ \begin{aligned}
F(D^2w,Dw,w,x) = {}& \rho w & \mbox{in} & \quad \Omega \\
w = {}& 0 & \mbox{on} & \quad \partial \Omega.
\end{aligned}\right.
\end{equation*}
If $\rho < \lambda_1^-(F,\Omega)$, the operator $G_\rho$ satisfies the minimum principle. Therefore $w \geq 0$ in $\Omega$. According to Hopf's Lemma, $w > 0$. By the uniqueness of the positive principal eigenvalue, it follows that $\rho = \lambda_1^+(F,\Omega)$. Arguing similarly, we deduce that if $\rho < \lambda^+_1(F,\Omega)$, then $\rho = \lambda^-_1(F,\Omega)$.
\end{proof}

\begin{lem}\label{lem:lambda-2-is-eigenvalue}
If $\lambda_2(F,\Omega) < \infty$, then there is a nonzero function $\varphi_2 \in C^{1,\alpha}(\Omega)$ which solves the Dirichlet problem
\begin{equation}\label{eq:eigenvalue-2}
\left\{ \begin{aligned}
F(D^2\varphi_2,D\varphi_2,\varphi_2,x) = {}& \lambda_2(F,\Omega) \varphi_2 & \mbox{in} & \quad \Omega \\
\varphi_2 = {}& 0 & \mbox{on} & \quad \partial \Omega.
\end{aligned}\right.
\end{equation}
\end{lem}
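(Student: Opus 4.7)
The plan is to obtain $\varphi_2$ as the limit of a sequence of eigenfunctions associated with eigenvalues decreasing to $\lambda_2(F,\Omega)$, relying on compactness from the $C^{1,\alpha}$ estimates and on viscosity stability to pass to the limit in the equation.

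First I would unwind the infimum in \EQ{def-lambda-2}: pick a sequence $\{\rho_k\}$ of eigenvalues of $F$ in $\Omega$ with $\rho_k \searrow \lambda_2(F,\Omega)$, and choose corresponding eigenfunctions $\psi_k \in C(\bar{\Omega})$, $\psi_k \not\equiv 0$, solving
\begin{equation*}
F(D^2\psi_k,D\psi_k,\psi_k,x) = \rho_k \psi_k \inOmega, \qquad \psi_k = 0 \onOmega.
\end{equation*}
By positive homogeneity \HYP{Fhomogeneous}, I may normalize so that $\|\psi_k\|_{L^\infty(\Omega)} = 1$. Since $\rho_k$ is bounded (say $\rho_k \leq \rho_1$), the right-hand sides $\rho_k \psi_k$ are uniformly bounded in $L^\infty(\Omega)$. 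Hence the $C^{1,\alpha}$ interior estimates cited after \THM{eigenvalues} (see \cite{Trudinger:1988,Winter:2008}) give
\begin{equation*}
\| \psi_k \|_{C^{1,\alpha}(\Omega')} \leq C(\Omega') \quad \text{for every } \Omega' \Subset \Omega.
\end{equation*}

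Next I would address uniform control up to $\partial\Omega$. Because $\psi_k$ has zero boundary data and $F(D^2\psi_k,D\psi_k,\psi_k,x)$ is uniformly bounded in $L^\infty(\Omega)$, one obtains (from standard barrier constructions together with the Hölder boundary estimates available under \HYP{Fcontinuous}--\HYP{Felliptic}) a uniform modulus of continuity for the $\psi_k$ on $\bar{\Omega}$. Combining this with the interior $C^{1,\alpha}$ bound and invoking Arzelà--Ascoli, a subsequence (not relabeled) converges uniformly on $\bar{\Omega}$ and in $C^{1,\alpha}_{\mathrm{loc}}(\Omega)$ to some $\varphi_2 \in C(\bar{\Omega}) \cap C^{1,\alpha}(\Omega)$.

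Finally I would pass to the limit. By the standard stability of viscosity solutions under uniform convergence (and continuity of $F$ via \HYP{Fcontinuous}), the limit $\varphi_2$ is a viscosity solution of
\begin{equation*}
F(D^2\varphi_2,D\varphi_2,\varphi_2,x) = \lambda_2(F,\Omega) \varphi_2 \inOmega, \qquad \varphi_2 = 0 \onOmega,
\end{equation*}
which is \EQ{eigenvalue-2}. The main obstacle is to verify that $\varphi_2 \not\equiv 0$, which is exactly what the uniform-up-to-the-boundary convergence was arranged for: from $\|\psi_k\|_{L^\infty(\Omega)} = 1$ one passes to $\|\varphi_2\|_{L^\infty(\Omega)} = 1$, so $\varphi_2$ is nontrivial. (If one only had interior convergence, it would be conceivable that the sup-norm escapes to the boundary in the limit, forcing the limiting eigenfunction to be zero.) With this, $\lambda_2(F,\Omega)$ is realized as an eigenvalue with eigenfunction $\varphi_2$ enjoying the stated regularity.
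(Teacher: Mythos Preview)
Your argument is correct and follows the same compactness-and-stability strategy as the paper's proof. The only difference is that you split the regularity into interior $C^{1,\alpha}$ estimates plus a separate barrier argument for boundary control, whereas the paper simply invokes the global $C^{1,\alpha}$ estimates up to $\partial\Omega$ (see \cite{Trudinger:1988,Winter:2008}, already used in the proof of \THM{eigenvalues}) to get $\|\psi_k\|_{C^{1,\alpha}(\Omega)} \leq C$ in one step. Since $\Omega$ is smooth and the right-hand side is bounded in $L^\infty$, these global estimates are available and make your barrier discussion unnecessary; in particular, your worry about the sup-norm ``escaping to the boundary'' is automatically handled. Once you accept the global bound, uniform convergence on $\bar{\Omega}$ is immediate and $\|\varphi_2\|_{L^\infty(\Omega)} = 1$ follows directly.
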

\begin{proof}
Take a sequence $\rho_k \to \lambda_2(F,\Omega)$ of eigenvalues, with corresponding eigenfunctions $u_k \in C(\bar{\Omega})$, satisfying the problem
\begin{equation} \label{eq:eigenvalue-2-k}
\left\{ \begin{aligned}
F(D^2u_k,Du_k,u_k,x) = {}& \rho_k u_k & \mbox{in} & \quad \Omega \\
u_k = {}& 0 & \mbox{on} & \quad \partial \Omega,
\end{aligned}\right.
\end{equation}
and subject to the normalization $\| u_k \|_{L^\infty(\Omega)} = 1$. We have the estimate
\begin{equation*}
\| u_k \|_{C^{1,\alpha}(\Omega)} \leq C.
\end{equation*}
Thus, up to a subsequence, $u_k$ converges to a function $\varphi_2 \in C(\bar{\Omega})$ uniformly on $\bar{\Omega}$. Noticing that $\| \varphi_2 \|_{L^\infty(\Omega)} = 1$, and passing to limits in \EQ{eigenvalue-2-k}, we see that $\varphi_2$ is a nontrivial solution of problem \EQ{eigenvalue-2}. Now apply the $C^{1,\alpha}$ estimates once again to conclude.
\end{proof}



\section{The Dirichlet problem for $\lambda < \minboth$  } \label{sec:Dirichlet-problem}

In this section we study questions of existence and uniqueness of solutions to the Dirichlet problem \EQ{Dirichlet-problem} for $\lambda$ less than the principal eigenvalues. We begin with the proof of \THM{DP-existence}.

\begin{proof}[Proof of \THM{DP-existence}]
The uniqueness assertions in \ENUM{DP-existence-plus} and \ENUM{DP-existence-minus} follow at once from \COR{CP-onesided}. We will only show existence for \ENUM{DP-existence-plus}, since the arguments for \ENUM{DP-existence-minus} and \ENUM{DP-existence-both} are very similar. Without loss of generality, suppose $\lambda = 0 < \lambda^+_1(F,\Omega)$. We will first produce a solution under the condition $f$ has support on a compact subset of $\Omega$. In this case we may find a large constant $A > 0$ so that
\begin{equation*}
0 \leq f \leq A \lambda^+_1 \varphi^+_1 \inOmega.
\end{equation*}
Then $u^* = A\varphi^+_1$ and $u_* \equiv 0$ satisfy
\begin{equation*}
F(D^2u_*,Du_*,u_*,x) \leq f \leq F(D^2u^*,Du^*,u^*,x) \inOmega,
\end{equation*}
and $u^* = u_* = 0$ on $\partial \Omega$. Standard results (see, for example, \cite{UsersGuide}) imply the existence of $u\in C^{1,\alpha}(\Omega)$ solving the Dirichlet problem \EQ{Dirichlet-problem}, and satisfying $0 = u_*\leq u \leq u^*$ in $\Omega$.

For general nonnegative $f\in C(\Omega) \cap L^p(\Omega)$, not necessary with compact support, we take a sequence $\{ f_k \} \subset C(\Omega)$ of nonnegative continuous functions with compact support in $\Omega$ such that $f_k \to f$ uniformly on each compact subset of $\Omega$ and $f_k \to f$ in $L^p(\Omega)$. Let $u_k \geq 0$ solve \EQ{Dirichlet-problem} with $f$ replaced by $f_k$. We claim
\begin{equation}\label{eq:DP-existence-1}
\sup_{k\geq 1} \| u_k \|_{L^\infty(\Omega)} < \infty.
\end{equation}
If not, we may assume $\| u_k \|_{L^\infty(\Omega)} \to \infty$. Set $v_k = u_k / \| u_k \|_{L^\infty(\Omega)}$ and notice $v_k$ satisfies the equation
\begin{equation*}
F(D^2v_k,Dv_k,v_k,x) = f / \| u_k \|_{L^\infty(\Omega)} \inOmega.
\end{equation*}
We have the estimate
\begin{equation*}
\| v_k \|_{C^{1,\alpha}(\Omega)} \leq C.
\end{equation*}
Taking a subsequence, if necessary, we may assume $v_k$ converges to a function $v \in C^{1,\alpha}(\Omega)$ uniformly on $\bar{\Omega}$. Passing to limits, we see $v$ satisfies the equation
\begin{equation*}
F(D^2v,Dv,v,x) = 0 \inOmega.
\end{equation*}
Moreover, $v = 0$ on $\partial \Omega$. Since $\lambda^+_1(F,\Omega) > 0$, the nonlinearity $F$ satisfies the maximum principle in $\Omega$, and so $v \equiv 0$. However, $\| v \|_{L^\infty(\Omega)} = \lim \| v_k \|_{L^\infty(\Omega)} = 1$. This contradiction establishes \EQ{DP-existence-1}. 

Now we apply the $C^{1,\alpha}$ estimates once again to obtain
\begin{equation*}
\| u_k \|_{C^{1,\alpha}(\Omega)} \leq C.
\end{equation*}
We may select a function $u \in C^{1,\alpha}(\Omega)$ such that, up to a subsequence, $u_k \to u$ uniformly on $\bar{\Omega}$. Obviously, $u \geq 0$ in $\Omega$. Now we pass to limits to conclude that $u$ is a solution of \EQ{Dirichlet-problem}.
\end{proof}

\begin{rem}
By simple modifications of the previous argument, we deduce similar results for the inhomogeneous Bellman-Isaacs problem
\begin{equation}\label{eq:Isaacs-inhomogeneous}
\left\{ \begin{aligned}
\inf_{\alpha} \sup_\beta \left\{ L^{\alpha,\beta}u - f^{\alpha,\beta} \right\} = {} & 0 & \mbox{in} & \quad \Omega \\
u = {}& 0 & \mbox{on} & \quad \partial \Omega.
\end{aligned}\right.
\end{equation}
If $F$ is given by \EQ{BIoperator} and satisfies \HYP{Fcontinuous}, \HYP{Felliptic} and \HYP{Fhomogeneous}, then for any uniformly bounded family $\left\{ f^{\alpha,\beta}\right\} \subseteq C(\bar{\Omega})$, the Dirichlet problem \EQ{Isaacs-inhomogeneous} has a solution $u\in C^{1,\alpha}(\Omega)$ provided
\begin{equation*}
\minboth > 0.
\end{equation*}
Likewise, if each function in the family $\left\{ f^{\alpha,\beta}\right\}$ is nonnegative (nonpositive), and $\lambda^+_1(F,\Omega) > 0$ ($\lambda^-_1(F,\Omega) > 0$), then the problem \EQ{Isaacs-inhomogeneous} has a nonnegative (nonpositive) solution $u \in  C^{1,\alpha}(\Omega)$. By \REM{BNV-half-homogeneous}, we see that this nonnegative (nonpositive) solution is unique.
\end{rem}

\medskip

We now discuss sufficient conditions for the solutions obtained in \THM{DP-existence}\ENUM{DP-existence-both} to be unique. To this end, and following \cite{Ishii:preprint}, we define nonlinear operators $\Fup$ and $\Fdown$ by 
\begin{align}
& \Fup(M,p,z,x) = \sup\left\{ F(M+N,p+q,z+w,x) - F(N,q,w,x) \right\} \label{eq:Fup}
\intertext{and}
& \Fdown(M,p,z,x) = \inf\left\{ F(M+N,p+q,z+w,x) - F(N,q,w,x) \right\}, \label{eq:Fdown}
\end{align}
where the supremum and infimum above are taken over $(N,q,w) \in \Sy\times\R^n \times\R$. If $G$ and $H$ are arbitrary real-valued functions on $\Sy \times \R^n \times \R \times \Omega$ satisfying
\begin{equation*}
G(M-N,p-q,z-w,x) \leq F(M,p,z,x) - F(N,q,w,x) \leq H(M-N,p-q,z-w,x),
\end{equation*}
then $G \leq \Fdown \leq \Fup \leq H$. The next proposition summarizes useful properties of $\Fup$ and $\Fdown$ as they relate to $F$.

\begin{prop}\label{prop:starstar}
The operators $\Fup$ and $\Fdown$ have the following properties:
\begin{enumerate}
\item $\Fup$ and $\Fdown$ satisfy \HYP{Fcontinuous}, \HYP{Felliptic}, and \HYP{Fhomogeneous}; \label{enum:starprops}
\item for each $x\in \Omega$, the map $(M,p,z) \mapsto \Fup(M,p,z,x)$ is convex, while the map $(M,p,z) \mapsto \Fdown(M,p,z,x)$ is concave; \label{enum:starconvex}
\item for all $M,N \in \Sy$, $p,q\in \R^n$, $z,w\in \R$, and $x\in \Omega$, 
\begin{multline*}
\qquad \Fdown(M-N,p-q,z-w,x) \leq F(M,p,z,x) - F(N,q,w,x)  \\
\leq \Fup(M-N,p-q,z-w,x);
\end{multline*}\label{enum:starbound}
\item for all $M\in \Sy$, $p\in \R^n$, $z\in\R$ and $x\in \Omega$, 
\begin{equation*}
\Fup(M,p,z,x) = -\Fdown(-M,-p,-z,x);
\end{equation*}\label{enum:starstar}
\item $F = F^*$ if and only if $(M,p,z)\mapsto F(M,p,z,x)$ is convex, while $F=F_*$ if and only if $(M,p,z) \mapsto F(M,p,z,x)$ is concave; and\label{enum:starnostar}
\item The principal eigenvalues of $\Fup$ and $\Fdown$ are related to those of $F$ by
\begin{equation*}
\lambda^-_1(\Fup,\Omega) = \lambda^+_1(\Fdown,\Omega) \leq \lambda^+_1(F,\Omega),\lambda^-_1(F,\Omega) \leq \lambda^+_1(\Fup,\Omega) = \lambda^-_1(\Fdown,\Omega).
\end{equation*}\label{enum:stareigenvalues}
\end{enumerate}
\end{prop}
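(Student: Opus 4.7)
The plan is to handle the six claims more or less in the stated order, with (ii) providing the technical core and (vi) being the part where most bookkeeping is needed.

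For (i), I would verify each hypothesis by an elementary substitution in the definitions \EQ{Fup} and \EQ{Fdown}. Positive homogeneity \HYP{Fhomogeneous} for $F^*$, for instance, comes from the change of variables $(N,q,w) = (tN',tq',tw')$ in the supremum, combined with \HYP{Fhomogeneous} for $F$. Uniform ellipticity \HYP{Felliptic} and continuity \HYP{Fcontinuous} for $F^*$ and $F_*$ follow because these properties are formulated precisely as two-sided bounds on differences $F(\,\cdot\,) - F(\,\cdot\,)$, and so they pass directly to the sup and inf that define $F^*$ and $F_*$. Claim (iii) is immediate from the definitions (take the particular choice $(N,q,w)$ inside the sup/inf and rearrange), and (iv) is one more substitution $(N,q,w) \mapsto (-M-N,-p-q,-z-w)$ in the supremum defining $F^*$, which converts it into the negative of an infimum over the same set — that is, $-F_*(-M,-p,-z,x)$.

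The convexity statement (ii) is the one step that requires an actual argument. Using (i), $F^*$ is already positively homogeneous of degree one, so it is enough to establish subadditivity. Given $(M_j,p_j,z_j)$, $j=1,2$, I would write the telescoping identity
\begin{equation*}
F(M_1+M_2+N,p_1+p_2+q,z_1+z_2+w,x) - F(N,q,w,x) = A + B,
\end{equation*}
where $A = F(M_1+(M_2+N), p_1+(p_2+q), z_1+(z_2+w),x) - F(M_2+N,p_2+q,z_2+w,x)$ and $B = F(M_2+N,p_2+q,z_2+w,x) - F(N,q,w,x)$. By the definition of $F^*$ we have $A \leq F^*(M_1,p_1,z_1,x)$ and $B \leq F^*(M_2,p_2,z_2,x)$ for every $(N,q,w)$, so taking the supremum yields subadditivity of $F^*$. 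Concavity of $F_*$ is proved by the dual argument (or by invoking (iv)). For (v), if $F$ is itself convex, positive homogeneity again gives subadditivity, so $F(M+N,\ldots) - F(N,\ldots) \leq F(M,\ldots)$, showing $F^* \leq F$; the reverse inequality is obtained by choosing $(N,q,w) = (0,0,0)$ and noting $F(0,0,0,x)=0$ by \HYP{Fhomogeneous}. The converse direction of (v) is then a restatement of (ii), and the statement for $F_*$ follows from (iv).

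For (vi), the main observation is that (iii) (with $N=q=w=0$) gives the pointwise sandwich $F_* \leq F \leq F^*$. Applying this in the variational formulas defining $\lambda_1^+$ and $\lambda_1^-$ and noting in which direction each inequality points yields
\begin{equation*}
\lambda_1^+(F_*,\Omega) \leq \lambda_1^+(F,\Omega) \leq \lambda_1^+(F^*,\Omega), \qquad \lambda_1^-(F^*,\Omega) \leq \lambda_1^-(F,\Omega) \leq \lambda_1^-(F_*,\Omega).
\end{equation*}
To close the argument I would use (iv) to change a positive supersolution of $F^*$ into a negative subsolution of $F_*$ (and vice versa): if $v > 0$ and $F^*(D^2v,Dv,v,x) \geq \rho v$, then $w = -v < 0$ satisfies $F_*(D^2w,Dw,w,x) \leq \rho w$, using $F^*(M,p,z,x) = -F_*(-M,-p,-z,x)$. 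Thus $\lambda_1^+(F^*,\Omega) = \lambda_1^-(F_*,\Omega)$ and, symmetrically, $\lambda_1^-(F^*,\Omega) = \lambda_1^+(F_*,\Omega)$. Combining these identifications with the sandwich inequalities above gives exactly the chain claimed in (vi). The only real obstacle I anticipate is keeping the direction of the inequalities straight in this last step — a minor bookkeeping issue rather than a conceptual one.
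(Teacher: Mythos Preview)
Your proposal is correct and follows essentially the same route as the paper's proof. The paper dispatches (i), (iii), (iv) as immediate from the definitions and says (vi) follows at once from these, while proving (ii) via the same telescoping/subadditivity computation you describe (your decomposition $A+B$ is exactly the paper's substitution $N_1 \mapsto M_2 + N_1$ in the supremum), and deriving (v) from the equivalence of convexity and sublinearity for positively homogeneous maps; your treatment of (vi) simply spells out in more detail what the paper leaves implicit.
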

\begin{proof}
The properties \ENUM{starprops}, \ENUM{starbound} and \ENUM{starstar} are immediate from the definitions \EQ{Fup} and \EQ{Fdown}, and \ENUM{stareigenvalues} follows at once from these. To deduce \ENUM{starconvex}, observe
\begin{align*}
\lefteqn{\Fup(M_1,p_1,z_1,x) + \Fup(M_2,p_2,z_2,x)} \quad & \\
& = \inf\left\{ F(M_1+N_1,p_1+q_1,z_1+w_1,x) - F(N_1,q_1,w_1,x) \right\} \\ 
& \qquad + \inf\left\{ F(M_2+N_2,p_2+q_2,z_2+w_2,x) - F(N_2,q_2,w_2,x) \right\} \\
& = \inf\left\{ F(M_1+M_2+N_1,p_1+p_2+q_1,z_1+z_2+w_1,x) - F(M_2+N_1,p_2+q_1,z_2+w_1,x) \right\} \\ 
& \qquad + \inf\left\{ F(M_2+N_2,p_2+q_2,z_2+w_2,x) - F(N_2,q_2,w_2,x) \right\} \\
& \geq \inf\left\{ F(M_1+M_2+N_1,p_1+p_2+q_1,z_1+z_2+w_1,x) - F(N_1,q_1,w_1,x) \right\}\\ 
& = \Fup(M_1+M_2,p_1+p_2,z_1+z_2,x).
\end{align*}
Recall that for a positively homogeneous function, the notions of convexity and sublinearity are equivalent. Thus $\Fup$ is convex in $(M,p,z)$. From \ENUM{starstar}, we see that $\Fdown$ is concave in $(M,p,z)$, confirming \ENUM{starconvex}. It is clear by now that $F=\Fup$ if and only if $F$ is sublinear, from which \ENUM{starnostar} follows.
\end{proof}

The following result of \cite{Ishii:preprint} is also a consequence of Theorem 1.5 of \cite{Quaas:2008} once we have \PROP{starstar} above. We record its proof for completeness.

\begin{thm}[Ishii and Yoshimura \cite{Ishii:preprint}]
If $\lambda^-_1(\Fup,\Omega) > 0$, then $F$ satisfies the comparison principle in $\Omega$.
\end{thm}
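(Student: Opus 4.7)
The plan is to deduce the comparison principle from the minimum principle for $\Fup$, applied to the difference $w := v-u$. By \PROP{starstar}\ENUM{starprops} the operator $\Fup$ itself satisfies \HYP{Fcontinuous}--\HYP{Fhomogeneous}, so all of the theory developed so far applies to $\Fup$. The hypothesis $\lambda^-_1(\Fup,\Omega)>0$ then places $0$ in the open interval $(-\infty,\lambda^-_1(\Fup,\Omega))$, and \EQ{lambda-mu-interval-subset-m} applied to $\Fup$ in place of $F$ shows that the operator $\Fup$ (that is, its own $G_0$) satisfies the minimum principle in $\Omega$. Since by hypothesis $u\leq v$ on $\partial\Omega$, i.e., $w\geq 0$ on $\partial\Omega$, the conclusion $u\leq v$ in $\Omega$ will follow immediately once we show that $w$ is a viscosity supersolution of $\Fup(D^2w,Dw,w,x)=0$ in $\Omega$.

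To establish this supersolution property, set $\tilde G(M,p,z,x) := -F(-M,-p,-z,x)$. A direct test-function calculation (local maxima for $u-\varphi$ correspond to local minima for $(-u)-(-\varphi)$) shows that $-u$ is a viscosity supersolution of $\tilde G(D^2(-u),D(-u),-u,x)=-f$. The definition \EQ{Fup} gives pointwise
\begin{equation*}
\Fup(M-N,p-q,z-w,x) \;\geq\; F(M,p,z,x) - F(N,q,w,x);
\end{equation*}
performing the substitution $(N,q,w)\mapsto(-N,-q,-w)$ and invoking the definition of $\tilde G$, this rewrites as
\begin{equation*}
\Fup(M+N,p+q,z+w,x) \;\geq\; F(M,p,z,x) + \tilde G(N,q,w,x)
\end{equation*}
for every choice of arguments. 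This is precisely the reversed inequality \EQ{H-geq-F-plus-G} required by the second (supersolution) assertion of \LEM{u-v-subsolution}, applied with $H=\Fup$ and $G=\tilde G$. Combining the two supersolutions $v$ (of $F(\cdot)=f$) and $-u$ (of $\tilde G(\cdot)=-f$), the lemma yields that $w = v + (-u)$ is a viscosity supersolution of $\Fup(D^2 w,Dw,w,x) = f+(-f) = 0$ in $\Omega$.

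With this in hand, the minimum principle for $\Fup$ noted in the first paragraph gives $w\geq 0$ in $\Omega$, i.e., $u\leq v$ in $\Omega$, which is exactly the comparison principle. The only nontrivial step is the algebraic identification in the second paragraph of $\Fup$ as the correct operator dominating the pointwise sum $F+\tilde G$; this is immediate from the definition of $\Fup$ together with the symmetry $\Fup(M,p,z,x) = -\Fdown(-M,-p,-z,x)$ of \PROP{starstar}\ENUM{starstar}. Once this point is recognized, \LEM{u-v-subsolution} handles the viscosity-theoretic passage from pointwise inequalities to an equation satisfied by the sum of solutions, and the conclusion is a direct invocation of \EQ{lambda-mu-interval-subset-m} for $\Fup$.
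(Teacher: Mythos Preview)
Your proof is correct and follows essentially the same approach as the paper: show $w=v-u$ is a viscosity supersolution of $\Fup(D^2w,Dw,w,x)=0$ via \LEM{u-v-subsolution} and \PROP{starstar}\ENUM{starbound}, then invoke the minimum principle for $\Fup$ coming from $\lambda^-_1(\Fup,\Omega)>0$. The paper states this in two lines, citing \PROP{starstar}\ENUM{starbound} and \LEM{u-v-subsolution} directly, whereas you unpack the application of the lemma by explicitly introducing the auxiliary operator $\tilde G(M,p,z,x)=-F(-M,-p,-z,x)$ and verifying the reversed inequality \EQ{H-geq-F-plus-G}; the content is the same.
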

\begin{proof}
Suppose that $u,v\in C(\bar{\Omega})$ and $f\in C(\Omega)$ are such that
\begin{equation*}
F(D^2u,Du,u,x) \leq f \leq F(D^2v,Dv,v,x) \inOmega,
\end{equation*}
and $u \leq v$ on $\partial \Omega$. \PROP{starstar}\ENUM{starbound} and \LEM{u-v-subsolution} imply that the function $w = v-u$ satisfies
\begin{equation*}
F^*(D^2w,Dw,w,x) \geq 0 \inOmega,
\end{equation*}
and $w \geq 0$ on $\partial \Omega$. If $\lambda^-_1(\Fup,\Omega) > 0$, then the minimum principle holds for $\Fup$ in $\Omega$. Thus $w$ is nonnegative on $\bar{\Omega}$.
\end{proof}

\begin{rem}
I do not know whether $\lambda^-_1(\Fup,\Omega)$ is equal to $\minboth$ in general, or if $\lambda^-_1(\Fup,\Omega)> 0$ is necessary for $F$ to satisfy the comparison principle in $\Omega$.
\end{rem}



\section{The Dirichlet problem for $\lambda > \max\{\lambda^-_1(F,\Omega),\lambda^+_1(F,\Omega)\}$} \label{sec:existence-past-lambda-1}

The purpose of this section is to prove \THM{existence-past-lambda-1}. Our argument will utilize the theory of Leray-Schauder degree. Our plan is to build a homotopy between our problem \EQ{Dirichlet-problem} and a similar Dirichlet problem for the Laplacian, and then argue solutions must exist along the path of the homotopy. For the convenience of the reader, we now briefly introduce the concept of Leray-Schauder degree.

\medskip

Let $X$ be a Banach space. The identity map on $X$ is denoted by $\mathcal{I}_X$. We denote the set of (possibly nonlinear) compact maps from $X$ to itself by $K(X)$, and we define the set $K_1(X)$ of compact perturbation of the identity by
\begin{equation*}
K_1 (X) = \left\{ \mathcal{I}_X + \mathcal{A} \, : \, \mathcal{A} \in K(X) \right\}.
\end{equation*}
The norm topology on $X$ is written
\begin{equation*}
\tau(X) = \left\{ W \subseteq X \, : \, W \mbox{ is open} \right\}.
\end{equation*}
We need to define an appropriate notion of homotopy.
\begin{definition}\label{def:homotopy}
We say $\mathcal{A}:X\times [0,1] \to X$ is a \emph{homotopy of compact operators on} $X$ if:
\begin{enumerate}
\item the map $u \mapsto \mathcal{A}(u,s)$ is a compact operator on $X$, for each fixed $s\in [0,1]$,
\end{enumerate}
and
\begin{enumerate}
\addtocounter{enumi}{1}
\item for every $\varepsilon > 0$ and $C > 0$, there exists $\eta > 0$ such that $|s-t| \leq \eta$ and $\| u \|_X \leq C$ imply $\| \mathcal{A}(u,s) - \mathcal{A}(u,t) \|_X \leq \varepsilon$.
\end{enumerate}
\end{definition}

\begin{thm}[Existence of Leray-Schauder Degree]\label{thm:leray-schauder-degree}
There exists an integer-valued function
\begin{equation*}
\deg : K_1(X) \times \tau(X) \times X \to \Z
\end{equation*}
with the following properties:
\begin{enumerate}
\item If $\deg( \mathcal{B}, W, f) \neq 0$, then $f\in \mathcal{B}(W)$.\label{enum:degree-not-zero}
\item If $\mathcal{B}\in K_1(X)$ is injective, then $\deg( \mathcal{B}, W, f) = \pm 1$ for each $f\in \mathcal{B}(W)$.
\label{enum:degree-injective}
\item If $\mathcal{A}:  X \times [0,1] \to X$ is a homotopy of compact operators on $X$ and $\mathcal{B}_s = \mathcal{I}_X + \mathcal{A}(\cdot,s)$, then for any open subset $W \subseteq X$ and $f \in X$ for which $f \not\in \mathcal{B}_s(\partial W)$ for every $s\in [0,1]$, the map $s\mapsto \deg\left( \mathcal{B}_s , W, f\right)$ is constant.
\label{enum:homotopy}
\end{enumerate}
\end{thm}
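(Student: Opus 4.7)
The plan is to construct the Leray-Schauder degree by reducing to the Brouwer degree on finite-dimensional normed spaces, which I would take as already established (its own construction is substantial, typically via regular values of smooth approximations or via simplicial homology). The key mechanism is that although $\mathcal{B} = \mathcal{I}_X + \mathcal{A} \in K_1(X)$ acts on an infinite-dimensional space, the compactness of $\mathcal{A}$ allows uniform approximation on bounded sets by finite-rank operators $\mathcal{A}_\varepsilon$; one then defines $\deg(\mathcal{B},W,f)$ to be the Brouwer degree of $\mathcal{I} + \mathcal{A}_\varepsilon$ restricted to an appropriate finite-dimensional subspace.

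More concretely, fix admissible data $(\mathcal{B}, W, f)$ with $W$ bounded (the unbounded case can be handled by excising a large ball where the problem is trivial). Since $f \notin \mathcal{B}(\partial W)$ and $\mathcal{A}(\overline{W})$ is relatively compact, one has $d := \inf_{x \in \partial W} \| \mathcal{B}(x) - f \|_X > 0$. For each $0 < \varepsilon < d/2$ I would use a Schauder projection onto a finite $\varepsilon$-net for $\overline{\mathcal{A}(W)}$ to produce a finite-rank map $\mathcal{A}_\varepsilon$ with $\sup_{x \in \overline{W}} \|\mathcal{A}(x) - \mathcal{A}_\varepsilon(x)\|_X < \varepsilon$. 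Picking any finite-dimensional subspace $X_\varepsilon \subseteq X$ containing $f$ and the range of $\mathcal{A}_\varepsilon$, I would set
\[
\deg(\mathcal{B}, W, f) \;:=\; \deg_{\mathrm{Br}}\!\Bigl( (\mathcal{I} + \mathcal{A}_\varepsilon)\big|_{\overline{W} \cap X_\varepsilon}, \; W \cap X_\varepsilon, \; f \Bigr).
\]
Well-definedness requires two checks: independence from the choice of $X_\varepsilon$ (via the \emph{reduction property} of the Brouwer degree for inclusions of finite-dimensional subspaces), and independence from the choice of $\mathcal{A}_\varepsilon$ (via Brouwer homotopy invariance applied to the affine interpolation between two approximations, which remains $\varepsilon$-close to $\mathcal{A}$ on $\overline{W}$ and hence never attains $f$ on $\partial W$).

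Once well-definedness is in hand, the three listed properties descend from their Brouwer analogues. For \ENUM{degree-not-zero}: if $f \notin \mathcal{B}(\overline{W})$ then for sufficiently small $\varepsilon$ the same holds for $(\mathcal{I} + \mathcal{A}_\varepsilon)(\overline{W} \cap X_\varepsilon)$, whence the Brouwer degree vanishes. For \ENUM{degree-injective}: the degree reduces to the Brouwer degree of a homeomorphism between finite-dimensional open sets, which is $\pm 1$. For \ENUM{homotopy}: I would exploit the uniform continuity condition in \DEF{homotopy} to construct a \emph{single} finite-rank approximation $\mathcal{A}_\varepsilon(\cdot, s)$ that stays close to $\mathcal{A}(\cdot, s)$ uniformly in $s$, and then invoke Brouwer homotopy invariance on a finite subdivision of $[0,1]$.

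The step I expect to be most delicate is the independence-of-subspace claim underlying well-definedness, namely the Brouwer \emph{reduction property}: if $X_\varepsilon \subseteq X_\varepsilon'$ are two admissible subspaces and $Y$ is a linear complement of $X_\varepsilon$ in $X_\varepsilon'$, then the Brouwer degree computed in $X_\varepsilon'$ agrees with the one computed in $X_\varepsilon$, because $\mathcal{I} + \mathcal{A}_\varepsilon$ acts as the identity on the $Y$-factor. This is typically proved by writing the map in product form and invoking multiplicativity of the Brouwer degree, which forces one back to the combinatorial or homological construction of the Brouwer degree itself. A related technical point arising in \ENUM{homotopy} will be verifying that the set $\{ \mathcal{A}(u,s) : u \in \overline{W}, \, s \in [0,1] \}$ is relatively compact—this is not literally asserted in \DEF{homotopy}, but can be extracted by combining the pointwise compactness in (i) with the equicontinuity in $s$ of (ii).
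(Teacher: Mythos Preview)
The paper does not prove \THM{leray-schauder-degree} at all: immediately after the statement it simply says ``We refer to \cite{Fonseca:Book,Granas:Book} for more on Leray-Schauder degree theory, including a proof of \THM{leray-schauder-degree}.'' So there is no in-paper argument to compare against.

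Your outline is the standard construction found in those references: Schauder-projection approximation of the compact part by finite-rank maps, reduction to the Brouwer degree on a finite-dimensional subspace, well-definedness via the reduction property and Brouwer homotopy invariance, and then inheritance of the three properties from their Brouwer counterparts. The technical points you flag (the reduction property for nested subspaces, and extracting relative compactness of $\{\mathcal{A}(u,s):u\in\overline{W},\,s\in[0,1]\}$ from \DEF{homotopy}) are exactly the places where care is needed, and your sketch handles them correctly. In short, your proposal is a faithful summary of the classical proof the paper is citing, not a deviation from it.
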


We refer to \cite{Fonseca:Book,Granas:Book} for more on Leray-Schauder degree theory, including a proof of \THM{leray-schauder-degree}.

\medskip

We want to define a homotopy between \EQ{Dirichlet-problem} and the corresponding Dirichlet problem for the Laplacian which ``stays between" the principal eigenvalues and the second eigenvalue $\lambda_2$. As a preliminary step, we must show $\maxboth < \lambda_2(F,\Omega)$.

\begin{lem}\label{lem:eigenvalues-isolated}
There exists a positive number $\eta >0$ such that $F$ has no eigenvalue $\rho$ in $\Omega$ satisfying\begin{equation*}
\maxboth < \rho \leq \maxboth + \eta.
\end{equation*} 
\end{lem}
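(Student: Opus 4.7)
The plan is to argue by contradiction. Assume no such $\eta$ exists, so there are eigenvalues $\rho_k$ of $F$ in $\Omega$ with $\rho_k > \lambda^* := \maxboth$ and $\rho_k \to \lambda^*$, with associated eigenfunctions $u_k$ normalized by $\|u_k\|_{L^\infty(\Omega)} = 1$. Using the $C^{1,\alpha}$ estimates exactly as in the proof of \LEM{lambda-2-is-eigenvalue}, I would pass to a subsequence along which $u_k \to u$ in $C^1(\overline{\Omega})$, where $u$ is a nontrivial eigenfunction of $F$ in $\Omega$ at eigenvalue $\lambda^*$ with $\|u\|_{L^\infty(\Omega)} = 1$; without loss of generality take $\lambda^* = \lambda^+_1(F,\Omega)$ (the other case is symmetric using $\varphi^-_1$). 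A first observation is that each $u_k$ must change sign in $\Omega$: otherwise the strong maximum principle and \THM{hopf} would make $u_k$ strictly of one sign, and the uniqueness assertion of \THM{eigenvalues} would force $\rho_k \in \{\lambda^+_1(F,\Omega), \lambda^-_1(F,\Omega)\}$, contradicting $\rho_k > \lambda^*$. So each $u_k$ changes sign, and the heart of the proof is to show that the limit $u$ is nevertheless sign-definite.

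Suppose for contradiction that $u$ changes sign. Using \THM{hopf} applied to $\varphi^+_1$, the ratio $u/\varphi^+_1$ extends continuously to $\overline{\Omega}$, and
\[
t^* := \sup_{\overline{\Omega}} \frac{u}{\varphi^+_1} \in (0,\infty),
\]
positive because $u > 0$ somewhere. Set $w := t^*\varphi^+_1 - u \geq 0$ on $\overline{\Omega}$. Subtracting the eigenvalue equations $F(D^2(t^*\varphi^+_1),\ldots) = \lambda^*(t^*\varphi^+_1)$ (using \HYP{Fhomogeneous}) and $F(D^2u,\ldots) = \lambda^* u$, and invoking \HYP{Felliptic} together with \LEM{u-v-subsolution}, one derives
\[
\pucci(D^2 w) - \consp|Dw| - (\consz + \lambda^*) w \leq 0 \inOmega.
\]
If there is an interior contact point $x_0 \in \Omega$ with $w(x_0) = 0$, the strong minimum principle for this Pucci-type operator forces $w \equiv 0$ in $\Omega$, so $u \equiv t^*\varphi^+_1 > 0$, contradicting that $u$ changes sign. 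Otherwise $w > 0$ in $\Omega$ with contact only on $\partial\Omega$, and \THM{hopf} applied to $w$ together with the Hopf estimate for $\varphi^+_1$ provides an $\epsilon > 0$ with $w \geq \epsilon\varphi^+_1$ on $\overline{\Omega}$, giving $(t^*-\epsilon)\varphi^+_1 \geq u$ on $\overline{\Omega}$, contradicting the definition of $t^*$ as an infimum.

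Thus $u$ is sign-definite, and I may take $u > 0$ in $\Omega$ (the case $u < 0$ is symmetric). Since $u_k \to u$ in $C^1(\overline{\Omega})$ and $\partial_\nu u < 0$ on $\partial\Omega$ by \THM{hopf}, it follows that $u_k > 0$ in $\Omega$ for all sufficiently large $k$, contradicting the sign-changing property of the $u_k$. The main obstacle I anticipate is the sliding step that rules out sign-changing limits — in particular the boundary-only contact case, where I must extract a strictly positive gap between $w$ and $\varphi^+_1$ near $\partial\Omega$ using $C^1$ control up to the boundary together with \THM{hopf} applied to the difference $w$.
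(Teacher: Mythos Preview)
Your argument is essentially correct and shares the same skeleton as the paper's proof (contradiction, compactness, limit eigenfunction, sign analysis), but differs from it in two places.

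First, your sliding argument to rule out a sign-changing limit $u$ is unnecessary: once you know $u$ is a nontrivial eigenfunction at $\lambda^* = \maxboth$, the simplicity assertion of \THM{eigenvalues} immediately tells you that $u$ is a constant multiple of $\varphi^+_1$ (or $\varphi^-_1$), and hence of one sign. The paper invokes this directly, in one line. Your sliding/Hopf argument is valid (after sorting out the sign conventions so that $-w \leq 0$ satisfies the inequality in \THM{hopf} with $\consz$ replaced by $\consz + |\lambda^*|$), but it effectively re-proves simplicity, which you already have available.

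Second, in the final step the paper does not use $C^1$ convergence together with Hopf's Lemma to force $u_k > 0$ near $\partial\Omega$. Instead it chooses a compact set $K \subset \Omega$ with $|\Omega \setminus K|$ so small that the maximum and minimum principles hold for $G_{\lambda_k}$ in $\Omega \setminus K$ (uniformly in $k$, via the ABP-type small-domain result used in the proof of \THM{BNV}); uniform convergence then forces $u_k$ to have one sign on $K$ for large $k$, and the maximum/minimum principle in $\Omega \setminus K$ propagates this sign to all of $\Omega$. Your approach via $C^1(\bar{\Omega})$ convergence and $\partial_\nu u < 0$ on $\partial\Omega$ is also correct and is arguably cleaner, but it requires the global $C^{1,\alpha}$ estimates up to the boundary, whereas the paper's compact-set argument needs only uniform convergence. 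The paper's device is more robust and reappears later (in the proofs of \LEM{lambda-2-lower-semicontinuous} and \THM{AMP}).
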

\begin{proof}
Suppose on the contrary that there exist real numbers $\{ \lambda_k \}$ such that
\begin{equation*}
\maxboth < \lambda_k \rightarrow \maxboth
\end{equation*}
and functions $\{ u_k \} \subseteq C(\bar{\Omega})$ satisfying the Dirichlet problem
\begin{equation}\label{eq:isolated-1}
\left\{ \begin{aligned}
F(D^2u_k,Du_k,u_k,x) = {}& \lambda_k u_k & \mbox{in} & \quad \Omega \\
u_k = {}& 0 & \mbox{on} & \quad \partial \Omega,
\end{aligned}\right.
\end{equation}
and subject to the normalization $\| u_k \|_{C(\bar{\Omega})} = 1$. The $C^{1,\alpha}$ estimates imply
\begin{equation*}
\| u_k \|_{C^{1,\alpha}(\Omega)} \leq C.
\end{equation*}
By taking a subsequence, if necessary, we may assume
\begin{equation}\label{eq:eigenvalues-isolated-1}
u_k \rightarrow u \quad \mbox{uniformly on} \quad \bar{\Omega}.
\end{equation}
Passing to limits, we deduce that $u$ is a solution of the equation
\begin{equation*}
F(D^2u,Du,u,x) = \maxboth u \inOmega.
\end{equation*}
According to \THM{eigenvalues}, the function $u$ is proportional to $\varphi^+_1$ or $\varphi^-_1$. Either way, $u$ does not change sign in $\Omega$.

As in the proof of \THM{BNV}, we may choose a compact subset $K$ of $\Omega$ for which $| \Omega \backslash K |$ is small enough to ensure the operator $F - \lambda_k$ satisfies the maximum and minimum principles in $\Omega \backslash K$ for each $k\geq 1$. Recalling \EQ{eigenvalues-isolated-1}, for sufficiently large $k$, the function $u_k$ does not change sign on $K$. Then $u_k$ does not change sign in $\Omega$. Recalling \EQ{isolated-1}, we derive a contradiction to our assumption that $\lambda_k > \maxboth$.
\end{proof}

For each $0 \leq s \leq 1$, define a nonlinear operator $F_s$ by
\begin{equation}\label{eq:F-homotopy}
F_s(M,p,z,x) = -s \Gamma \trace(M) + (1-s) F(M,p,z,x).
\end{equation}
It is simple to verify that $F_s$ satisfies hypotheses \HYP{Fcontinuous}, \HYP{Felliptic}, and \HYP{Fhomogeneous}. Notice also that
\begin{equation}\label{eq:lambda-s-bounded}
\lambda^\pm_1(F_s,\Omega) \leq \lambda^\pm_1\left( \mathcal{P}^\pm(D^2\cdot) \pm \consp|D\cdot| \pm \consz | \cdot |,\Omega\right) < \infty.
\end{equation}
In order to construct a homotopy which satisfies \DEF{homotopy}, we must first verify that the functions $s\mapsto \lambda^\pm_1(F_s,\Omega)$ and $s\mapsto \lambda_2(F_s,\Omega)$ are appropriately continuous.

\begin{lem}\label{lem:lambda-1-continuous}
The maps $s \mapsto \lambda^+_1 (F_s,\Omega)$ and $s \mapsto \lambda^-_1 (F_s,\Omega)$ are continuous on $[0,1]$.
\end{lem}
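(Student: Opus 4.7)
The plan is to prove continuity at each $s_0 \in [0,1]$ by a compactness-plus-uniqueness argument, treating $\lambda_1^+$ and $\lambda_1^-$ in parallel. Fix an arbitrary sequence $s_k \to s_0$ in $[0,1]$; it suffices to show $\lambda_1^+(F_{s_k},\Omega) \to \lambda_1^+(F_{s_0},\Omega)$ (and analogously for $\lambda_1^-$).

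First I would collect a priori bounds. From \EQ{lambda-s-bounded} and \LEM{lambda-leq-mu}, the sequence $\{\lambda_1^+(F_{s_k},\Omega)\}$ is bounded above and below independently of $k$; note that $F_s$ satisfies \HYP{Fcontinuous}-\HYP{Fhomogeneous} with structural constants that can be chosen uniformly in $s\in[0,1]$. Let $\varphi_k$ denote the positive principal eigenfunction of $F_{s_k}$, normalized by $\sup_\Omega \varphi_k = 1$ as in \EQ{max-eigenfunctions-equal-1}. The estimate \EQ{eigenfunction-estimates} then yields a uniform bound $\|\varphi_k\|_{C^{1,\alpha}(\Omega)} \leq C$.

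Next, along a subsequence (not relabeled) I would arrange that $\lambda_1^+(F_{s_k},\Omega) \to \lambda^*$ and $\varphi_k \to \varphi$ uniformly on $\bar\Omega$, where $\varphi \in C(\bar\Omega)$ satisfies $\sup_\Omega \varphi = 1$, $\varphi \geq 0$ in $\Omega$, and $\varphi = 0$ on $\partial\Omega$. Because $F_s$ depends continuously (in fact linearly) on $s$ jointly with its other arguments, standard stability of viscosity solutions allows me to pass to the limit in
\begin{equation*}
F_{s_k}(D^2\varphi_k, D\varphi_k, \varphi_k, x) = \lambda_1^+(F_{s_k},\Omega)\,\varphi_k \inOmega
\end{equation*}
and conclude that $\varphi$ is a nontrivial nonnegative viscosity solution of
\begin{equation*}
F_{s_0}(D^2\varphi, D\varphi, \varphi, x) = \lambda^* \varphi \inOmega, \quad \varphi = 0 \onOmega.
\end{equation*}
Hopf's Lemma (\APP{hopf}) forces $\varphi > 0$ in $\Omega$, so the uniqueness clause of \THM{eigenvalues} identifies $\lambda^* = \lambda_1^+(F_{s_0},\Omega)$. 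Since every subsequence of $\{\lambda_1^+(F_{s_k},\Omega)\}$ admits a further subsequence converging to this common limit, the full sequence converges, proving continuity of $s \mapsto \lambda_1^+(F_s,\Omega)$. The argument for $\lambda_1^-$ is verbatim the same, with the negative principal eigenfunctions normalized by $\sup_\Omega(-\varphi_k^-)=1$.

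The only nontrivial point is securing the uniform $C^{1,\alpha}$ bound on $\varphi_k$, which hinges on a uniform (in $s$) bound for $\lambda_1^\pm(F_s,\Omega)$; this is already in hand via \EQ{lambda-s-bounded} and the $-\consz$ lower bound from \LEM{lambda-leq-mu}. Everything else is a packaging of \THM{eigenvalues} with the stability of viscosity solutions, and no new ideas are required.
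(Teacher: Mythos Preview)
Your proof is correct and follows essentially the same compactness-plus-uniqueness argument as the paper: bound $\lambda_1^\pm(F_{s_k},\Omega)$ uniformly via \LEM{lambda-leq-mu} and \EQ{lambda-s-bounded}, extract a convergent subsequence of normalized eigenfunctions using \EQ{eigenfunction-estimates}, pass to the limit by stability of viscosity solutions, and invoke the uniqueness clause of \THM{eigenvalues}. Your inclusion of Hopf's Lemma is harmless but not strictly needed, since \THM{eigenvalues} only requires the limiting eigenfunction to be nonnegative (respectively nonpositive) and nontrivial, both of which follow directly from the normalization.
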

\begin{proof}
Suppose $\{ s_k \} \subseteq [0,1]$ is such that $s_k \to s$. Let $\lambda_k = \lambda^-_1(F_{s_k},\Omega)$ and $\varphi_k$ denote the negative principal eigenvalue and eigenfunction of $F_{s_k}$ in $\Omega$. Recalling \EQ{lambda-leq-mu}, \EQ{eigenfunction-estimates}, and \EQ{lambda-s-bounded}, and by taking a subsequence, if necessary, we may assume there exist $\lambda \in \R$ and $\varphi\in C(\bar{\Omega})$ such that $\lambda_k \to \lambda$ and $\varphi_k \to \varphi$ uniformly on $\bar{\Omega}$. Notice $\varphi \leq 0$ and $\| \varphi \|_{C(\bar{\Omega})} = 1$. Passing to limits, we deduce the pair $(\lambda,\varphi)$ is a solution of the problem
\begin{equation*}
\left\{ \begin{aligned}
F_s(D^2\varphi,D\varphi,\varphi,x) = {}& \lambda \varphi & \mbox{in} & \quad \Omega \\
\varphi = {}& 0 & \mbox{on} & \quad \partial \Omega.
\end{aligned}\right.
\end{equation*}
According to \THM{eigenvalues} we have $\lambda = \lambda_1^-(F_s,\Omega)$. We have shown the map $s \mapsto \lambda^-_1(F_s,\Omega)$ is continuous. By arguing in a similar way, we deduce that $s \mapsto \lambda^+_1(F_s,\Omega)$ is continuous.
\end{proof}

\begin{lem}\label{lem:lambda-2-lower-semicontinuous}
The map $s \mapsto \lambda_2 (F_{s},\Omega)$ is lower semi-continuous on $[0,1]$. 
\end{lem}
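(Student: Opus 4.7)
Fix $s_0 \in [0,1]$ and let $\{s_k\} \subseteq [0,1]$ with $s_k \to s_0$. Setting $L := \liminf_{k \to \infty} \lambda_2(F_{s_k},\Omega)$, I may assume $L < \infty$ (otherwise the claim is trivial) and pass to a subsequence with $\lambda_2(F_{s_k},\Omega) \to L$. The plan is to identify $L$ as an eigenvalue of $F_{s_0}$ satisfying the strict inequality $L > \max\{\lambda_1^+(F_{s_0},\Omega),\lambda_1^-(F_{s_0},\Omega)\}$; the defining formula \EQ{def-lambda-2} will then yield $\lambda_2(F_{s_0},\Omega) \leq L$, as desired.

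For each $k$, \LEM{lambda-2-is-eigenvalue} furnishes an eigenfunction $\varphi_k \in C^{1,\alpha}(\Omega)$ at eigenvalue $\lambda_2(F_{s_k},\Omega)$, which I normalize by $\|\varphi_k\|_{L^\infty(\Omega)} = 1$. Since $\lambda_2(F_{s_k},\Omega)$ strictly exceeds both principal half-eigenvalues of $F_{s_k}$, the uniqueness clause of \THM{eigenvalues} forces each $\varphi_k$ to change sign in $\Omega$---the decisive fact at the end of the argument. The uniform $C^{1,\alpha}$ estimate \EQ{eigenfunction-estimates}, applicable with a constant independent of $k$ because the operators $F_{s_k}$ satisfy \HYP{Fcontinuous}--\HYP{Fhomogeneous} with structural constants uniform in $k$, together with Arzel\`a--Ascoli, furnishes a further subsequence along which $\varphi_k \to \varphi$ uniformly on $\bar\Omega$, with $\|\varphi\|_{L^\infty(\Omega)} = 1$ and $\varphi = 0$ on $\partial\Omega$. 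Standard viscosity stability---together with uniform convergence of $F_{s_k}$ to $F_{s_0}$ on bounded sets---then yields $F_{s_0}(D^2\varphi,D\varphi,\varphi,x) = L\varphi$ in $\Omega$, so $L$ is an eigenvalue of $F_{s_0}$. Passing to the limit in $\lambda_2(F_{s_k},\Omega) > \max\{\lambda_1^\pm(F_{s_k},\Omega)\}$ via \LEM{lambda-1-continuous} gives $L \geq \max\{\lambda_1^+(F_{s_0},\Omega),\lambda_1^-(F_{s_0},\Omega)\}$.

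The hard part---on which the whole proof rests---will be upgrading this last inequality to a strict one. I plan to do so by adapting the compactness-plus-small-domain argument from the proof of \LEM{eigenvalues-isolated}. Assume for contradiction that $L$ equals a principal half-eigenvalue of $F_{s_0}$. By a further extraction I may assume $\max_{\bar\Omega} \varphi_k = 1$ for all $k$ (the alternative $\min_{\bar\Omega} \varphi_k = -1$ is symmetric, with the roles of $\varphi_1^+$ and $\varphi_1^-$ interchanged); then $\max_{\bar\Omega} \varphi = 1$, so $\varphi$ is a non-negative eigenfunction of $F_{s_0}$ at $L$, and simplicity in \THM{eigenvalues} forces $L = \lambda_1^+(F_{s_0},\Omega)$ with $\varphi \equiv \varphi_1^+(F_{s_0},\Omega) > 0$ in $\Omega$. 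I then pick a compact $K \subseteq \Omega$ so large that $|\Omega \setminus K|$ is smaller than the small-domain threshold $\kappa_0$ associated to the family of operators $(M,p,z,x) \mapsto F_{s_k}(M,p,z,x) - \lambda_2(F_{s_k},\Omega)\, z$; such a \emph{uniform} $\kappa_0 > 0$ exists because these operators satisfy \HYP{Felliptic} with constants controlled independently of $k$ (the bounds $\gamma, \Gamma$ are preserved along the homotopy and the sequence $\lambda_2(F_{s_k},\Omega) \to L$ is bounded). Uniform convergence together with the strict positivity of $\varphi$ on $K$ gives $\varphi_k > 0$ on $K$ for all large $k$, hence $\varphi_k \geq 0$ on $\partial(\Omega \setminus K)$; the small-domain minimum principle applied to $\varphi_k$ on $\Omega \setminus K$ then forces $\varphi_k \geq 0$ there and hence throughout $\Omega$---contradicting the sign-changing property recorded above. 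This establishes the strict inequality $L > \max\{\lambda_1^\pm(F_{s_0},\Omega)\}$ and completes the proof.
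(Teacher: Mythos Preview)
Your proof is correct and follows essentially the same route as the paper's: extract a limiting eigenpair $(\varphi,L)$ via $C^{1,\alpha}$ compactness, use \LEM{lambda-1-continuous} for the weak inequality $L\geq\max\{\lambda_1^\pm(F_{s_0},\Omega)\}$, and rule out equality by combining simplicity at the principal eigenvalue (forcing $\varphi$ to be of one sign) with the uniform small-domain maximum/minimum principle on $\Omega\setminus K$ to contradict the sign-changing of the $\varphi_k$. One small wording point: the inference ``$\max_{\bar\Omega}\varphi = 1$, so $\varphi$ is non-negative'' is not valid on its own---you need to invoke simplicity \emph{first} (which makes $\varphi$ a multiple of a principal eigenfunction, hence signed) and only then use $\max\varphi=1$ to pin down the sign; you clearly have the right pieces, just slightly out of order.
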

\begin{proof}
Consider a sequence $s_k \to s$ for which
\begin{equation*}
\mu = \lim_{k \to \infty} \lambda_2(F_{s_k},\Omega).
\end{equation*}
We must demonstrate
\begin{equation} \label{eq:lambda-2-lower-semicontinuous-wts}
\lambda_2(F_s, \Omega) \leq \mu.
\end{equation}
If $\mu = +\infty$, then \EQ{lambda-2-lower-semicontinuous-wts} is immediate. Thus we may assume $\mu < \infty$. By taking a subsequence, if necessary, we may also assume $\lambda_2(F_{s_k},\Omega) < \infty$ for all $k$.
According to \LEM{lambda-2-is-eigenvalue}, for each $k \geq 1$ we may select an eigenfunction $\varphi_k \in C(\bar{\Omega})$ of $F_{s_k}$ in $\Omega$, with corresponding eigenvalue $\lambda_2(F_{s_k},\Omega)$, and which satisfies the normalization
\begin{equation*}
\| \varphi_k \|_{L^\infty(\Omega)} = 1.
\end{equation*}
The $C^{1,\alpha}$ estimates imply
\begin{equation*}
\sup_{k\geq 1} \| \varphi_k \|_{C^{1,\alpha}(\Omega)}  < \infty.
\end{equation*}
By taking another subsequence, we may assume $\varphi_k \to \varphi$ uniformly on $\bar{\Omega}$ for some $\varphi \in C(\bar{\Omega})$. Notice that $\| \varphi \|_{C(\bar{\Omega})} = 1$. Passing to limits, we conclude that the pair $(\varphi,\mu)$ satisfies the equation
\begin{equation}\label{eq:lower-semicontinuity-2-2}
\left\{ \begin{aligned}
F_{s}(D^2\varphi,D\varphi,\varphi,x) = {}& \mu \varphi & \mbox{in} & \quad \Omega \\
\varphi = {}& 0 & \mbox{on} & \quad \partial \Omega.
\end{aligned}\right.
\end{equation}
We now employ a familiar argument to show
\begin{equation}\label{eq:lower-semicontinuity-2-3}
\max \left\{ \lambda^-_1(F_s,\Omega),\lambda^+_1(F_s,\Omega) \right\} < \mu.
\end{equation}
The continuity of $s\mapsto \lambda_1^-(F_s,\Omega)$ implies $\lambda^-_1(F_s,\Omega) \leq \mu$. If, on the contrary $\lambda^-_1(F_s,\Omega) = \mu$, then $\varphi$ does not change sign in $\Omega$. Thus $\varphi_k$ does not change sign on a large fixed compact subset $K \subseteq \Omega$, provided $k$ is sufficiently large. If we choose $K$ large enough, then the operator
\begin{equation*}
H_k(M,p,z,x) = F(M,p,z,x) - \lambda_2(F_{s_k},\Omega)z
\end{equation*}
satisfies both the maximum and minimum principles in $\Omega\backslash K$, for all $k\geq 1$. Thus the function $\varphi_k$ does not change sign in $\Omega$, provided that $k$ is sufficiently large. This is a contradiction to \THM{eigenvalues}, verifying that $\lambda^-_1(F_s,\Omega) < \mu$. Arguing in a similar way, we obtain $\lambda^+_1(F_s,\Omega) < \mu$. We have confirmed \EQ{lower-semicontinuity-2-3}.

\medskip

Now \EQ{lambda-2-lower-semicontinuous-wts} follows from \EQ{lower-semicontinuity-2-2}, \EQ{lower-semicontinuity-2-3} and the definition \EQ{def-lambda-2} of $\lambda_2(F_s,\Omega)$.
\end{proof}

\begin{prop} \label{prop:split-the-eigenvalues}
Suppose
\begin{equation*}
\maxboth < \lambda < \lambda_2(F,\Omega).
\end{equation*}
Then there is a continuous function $\mu : [0,1] \to \R$ such that $\mu(0) = \lambda$ and
\begin{equation}\label{eq:split-the-eigenvalues}
\max\left\{ \lambda_1^-(F_s,\Omega), \lambda_1^+(F_s,\Omega) \right\} < \mu(s) < \lambda_2(F_s,\Omega) \quad \mbox{for all} \quad s\in [0,1].
\end{equation}
Moreover, for each fixed $g\in C(\Omega)$ and $p > n$, there exists a constant $C$ with the property that for any $f\in C(\Omega)\cap L^p(\Omega)$ such that $|f| \leq g$ in $\Omega$, any $0\leq s \leq 1$, and any solution $u\in C(\bar{\Omega})$ of the Dirichlet problem
\begin{equation*}
\left\{ \begin{aligned}
F_s(D^2u,Du,u,x) = {}& \mu(s) u + f & \mbox{in} & \quad \Omega \\
u = {}& 0 & \mbox{on} & \quad \partial \Omega,
\end{aligned}\right.
\end{equation*}
we have the estimate
\begin{equation}\label{eq:split-the-eigenvalues-est}
\| u \|_{C^{1,\alpha}(\Omega)} \leq C \left( 1+ \| f \|_{L^p(\Omega)}\right).
\end{equation}
\end{prop}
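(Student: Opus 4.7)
The proposition splits into two assertions, and I would handle them in turn.

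\textbf{Constructing $\mu$.} Set $a(s) := \max\{\lambda_1^+(F_s,\Omega), \lambda_1^-(F_s,\Omega)\}$ and $b(s) := \lambda_2(F_s,\Omega) \in (a(s),+\infty]$. Applying \LEM{eigenvalues-isolated} to the operator $F_s$ for each fixed $s$ gives the strict gap $a(s)<b(s)$. By \LEM{lambda-1-continuous} the function $a$ is continuous on $[0,1]$, and by \LEM{lambda-2-lower-semicontinuous} the function $b$ is lower semicontinuous. Consequently the set
\begin{equation*}
U := \left\{ (s,t) \in [0,1]\times\R \, : \, a(s) < t < b(s) \right\}
\end{equation*}
is open in $[0,1]\times\R$, contains $(0,\lambda)$, and has nonempty convex slice $(a(s),b(s))$ over each $s$. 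For every $s\in[0,1]$, choose $\mu_s\in(a(s),b(s))$ and an open interval $I_s \subseteq [0,1]$ about $s$ on which $a(s')<\mu_s<b(s')$ still holds; take $\mu_0=\lambda$ and arrange $I_0$ small enough that only $I_0$ covers a neighborhood of $0$. Extract a finite subcover of $[0,1]$ and form $\mu$ as a convex combination of the $\mu_{s_i}$ via a subordinate partition of unity. Since each slice of $U$ is an interval, $(s,\mu(s))\in U$ for all $s$, and the normalization at $0$ gives $\mu(0)=\lambda$.

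\textbf{The a priori estimate.} I would argue by contradiction. Suppose \EQ{split-the-eigenvalues-est} fails; then there are sequences $s_k\in[0,1]$, functions $f_k\in C(\Omega)\cap L^p(\Omega)$ with $|f_k|\leq g$, and solutions $u_k$ of the Dirichlet problem with $\|u_k\|_{C^{1,\alpha}(\Omega)} \geq k(1+\|f_k\|_{L^p(\Omega)})$. Since $\|f_k\|_{L^p}\leq\|g\|_{L^p}$ is bounded, the $C^{1,\alpha}$ estimates of \cite{Trudinger:1988,Winter:2008} (absorbing the $\mu(s_k)u_k$ term into the right-hand side and using boundedness of $\mu$ coming from continuity on $[0,1]$) would bound $\|u_k\|_{C^{1,\alpha}}$ in terms of $\|u_k\|_{L^\infty}$; thus necessarily $\|u_k\|_{L^\infty(\Omega)}\to\infty$. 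Normalize $v_k := u_k/\|u_k\|_{L^\infty}$, so that $\|v_k\|_{L^\infty}=1$ and $v_k$ solves
\begin{equation*}
F_{s_k}(D^2v_k,Dv_k,v_k,x) = \mu(s_k)v_k + \frac{f_k}{\|u_k\|_{L^\infty(\Omega)}} \inOmega, \qquad v_k = 0 \onOmega.
\end{equation*}
The uniform $C^{1,\alpha}$ bound together with Arzel\`a-Ascoli furnishes a subsequence along which $s_k \to s \in [0,1]$ and $v_k \to v$ uniformly on $\bar\Omega$ with $\|v\|_{L^\infty(\Omega)}=1$; passing to viscosity limits (using \LEM{lambda-1-continuous} to get $\mu(s_k)\to\mu(s)$) yields
\begin{equation*}
F_s(D^2v,Dv,v,x) = \mu(s) v \inOmega, \qquad v = 0 \onOmega.
\end{equation*}
Hence $\mu(s)$ is an eigenvalue of $F_s$ in $\Omega$. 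But by construction $a(s) < \mu(s) < b(s)$, and \LEM{no-eigenvalues-in-between} together with the definition \EQ{def-lambda-2} shows that the interval $\bigl(a(s),b(s)\bigr)$ contains no eigenvalue of $F_s$. This contradiction completes the proof.

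The main obstacle I anticipate is the construction of $\mu$: one has only lower semicontinuity of $b$ and the genuine possibility that $b(s)=+\infty$ at some $s$, so a direct formula such as $\tfrac{1}{2}(a(s)+b(s))$ is unavailable. The compactness-plus-partition-of-unity argument above sidesteps this by reducing the global construction to local choices, exploiting only that $U$ is open and has convex slices. The subsequent blow-up contradiction is fairly standard, but crucially it uses the construction of $\mu$ together with \LEM{no-eigenvalues-in-between} to rule out a limiting eigenfunction.
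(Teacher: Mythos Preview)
Your proof is correct and follows the paper's approach: the paper also derives the existence of $\mu$ directly from Lemmas~\ref{lem:lambda-1-continuous} and~\ref{lem:lambda-2-lower-semicontinuous} (your partition-of-unity argument simply makes this explicit), and the blow-up contradiction for the estimate is essentially identical. Two cosmetic remarks: $\mu(s_k)\to\mu(s)$ follows from the continuity of $\mu$ itself rather than from \LEM{lambda-1-continuous}, and the citation of \LEM{no-eigenvalues-in-between} is unnecessary since the definition \EQ{def-lambda-2} alone already ensures the interval $(a(s),b(s))$ contains no eigenvalue of $F_s$.
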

\begin{proof}
The existence of $\mu\in C[0,1]$ satisfying $\mu(0) = \lambda$ and \EQ{split-the-eigenvalues} follows from Lemmas \ref{lem:lambda-1-continuous} and \ref{lem:lambda-2-lower-semicontinuous}. According to the $C^{1,\alpha}$ estimates, to demonstrate \EQ{split-the-eigenvalues-est} it suffices to estimate
\begin{equation}\label{eq:split-the-eigenvalues-est-2}
\| u \|_{L^\infty(\Omega)} \leq C \left( 1 + \| f \|_{L^p(\Omega)}\right).
\end{equation}
We will establish \EQ{split-the-eigenvalues-est-2} by an indirect argument. Suppose on the contrary there are sequences $\{ f_k \}\subseteq C(\Omega)\cap L^p(\Omega)$, $\{ u_k \}\subseteq C(\bar{\Omega})$ and $\{ s_k \} \subseteq [0,1]$ satisfying $|f_k| \leq g$ in $\Omega$ as well as 
\begin{equation}
\left\{ \begin{aligned}
F_{s_k}(D^2u_k,Du_k,u_k,x) = {}& \mu(s_k) u_k + f_k & \mbox{in} & \quad \Omega \\
u_k = {}& 0 & \mbox{on} & \quad \partial \Omega,
\end{aligned}\right.
\end{equation}
but
\begin{equation*}
\| u_k \|_{L^\infty(\Omega)} / \left( 1 + \| f_k \|_{L^p(\Omega)}\right) \rightarrow \infty.
\end{equation*}
Define
\begin{equation*}
v_ k = u_k / \| u_k \|_{L^\infty(\Omega)}
\end{equation*}
and notice $v_k$ is a solution of
\begin{equation}\label{eq:split-the-eigenvalues-est-3}
F_{s_k}(D^2v_k,Dv_k,v_k,x) =  \mu(s_k) v_k + f_k/\|u_k \|_{L^\infty(\Omega)} \inOmega.
\end{equation}
Since $\| v_k \|_{L^\infty(\Omega)} = 1$, the $C^{1,\alpha}$ estimates imply
\begin{equation*}
\| v_k \|_{C^{1,\alpha}(\Omega)} \leq C \left( \| v_k \|_{L^\infty (\Omega)} + \| f_k \|_{L^p(\Omega)} / \| u_k \|_{L^\infty(\Omega)} \right) \leq C.
\end{equation*}
By taking a subsequence, if necessary, we may assume there exist $s\in [0,1]$ and $v \in C(\bar{\Omega})$ such that
\begin{equation*}
s_k \to s \quad \mbox{and} \quad v_k \to v \quad \mbox{uniformly on } \bar{\Omega}.
\end{equation*}
Moreover, on each compact subset $K \subseteq \Omega$,
\begin{equation*}
|f_k| / \| u_k \|_{L^\infty(\Omega)} \leq g / \| u_k \|_{L^\infty(\Omega)} \rightarrow 0 \quad \mbox{uniformly on} \quad K.
\end{equation*}
By passing to limits in \EQ{split-the-eigenvalues-est-3}, and noticing $\| v \|_{L^\infty(\Omega)} = 1$, we conclude that $\mu(s)$ is an eigenvalue of the operator $F_s$ in $\Omega$ with corresponding eigenfunction $v$. This contradiction to \EQ{split-the-eigenvalues} establishes \EQ{split-the-eigenvalues-est-2}.
\end{proof}

For the rest of this section, and without loss of generality, we assume that
\begin{equation}\label{eq:Fs-positive-lambda}
\mbox{the operator}\quad F_s \quad \mbox{is proper for every}\quad 0\leq s \leq 1.
\end{equation}
Otherwise, we may simply replace $F$ with the operator $G_{-\consz}$, where $\consz$ is as in \HYP{Felliptic}. We also fix a small constant $\alpha > 0$ and a function $f\in C(\Omega)\cap L^p(\Omega)$, for some $p > n$. Define a map $\mathcal{A}_{f} : C^\alpha(\Omega)\times [0,1] \to C^\alpha(\Omega)$ by 
\begin{equation*}
\mathcal{A}_{f}(v,s) = u,
\end{equation*}
where $u\in C^{1,\alpha}(\Omega)\subseteq C^\alpha(\Omega)$ is the unique solution of the Dirichlet problem
\begin{equation}
\left\{ \begin{aligned}
F_s(D^2u,Du,u,x) = {}& \mu(s) v + f & \mbox{in} & \quad \Omega \\
u = {}& 0 & \mbox{on} & \quad \partial \Omega.
\end{aligned}\right.
\end{equation}
We also define an operator $\mathcal{B}_{f,s} : C^\alpha(\Omega) \to C^\alpha(\Omega)$ by 
\begin{equation*}
\mathcal{B}_{f,s}(v) = v - \mathcal{A}_f(v,s).
\end{equation*}
For $u,v \in C^\alpha(\Omega)$, the equation
\begin{equation}\label{eq:mathcalB-equiv}
u = \mathcal{B}_{f,s}(v)
\end{equation}
is equivalent to the function $w=v-u$ solving the Dirichlet problem
\begin{equation}\label{eq:DP-mathcalB-w}
\left\{ \begin{aligned}
F_s(D^2w,Dw,w,x) = {}& \mu(s) v + f & \mbox{in} & \quad \Omega \\
w = {}& 0 & \mbox{on} & \quad \partial \Omega.
\end{aligned}\right.
\end{equation}
In this framework, our goal is to show there exists a solution $v \in C^\alpha(\Omega)$ of the equation
\begin{equation}\label{eq:existence-goal}
\mathcal{B}_{f,0}(v) = 0.
\end{equation}
We will accomplish this by demonstrating
\begin{equation} \label{eq:degree-goal}
\deg( \mathcal{B}_{f,0},W,0) \neq 0
\end{equation}
for some open subset $W \subseteq C^\alpha(\Omega)$, and then appealing to \THM{leray-schauder-degree}\ENUM{degree-injective}.

\begin{lem}\label{lem:nice-homotopy}
The map $\mathcal{A}_{f}$ is a homotopy of compact transformations on $C^\alpha(\Omega)$ in the sense of \DEF{homotopy}.
\end{lem}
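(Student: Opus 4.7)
My plan is to verify the two conditions in \DEF{homotopy} by combining the $C^{1,\alpha}$ regularity estimates of Trudinger~\cite{Trudinger:1988} and Winter~\cite{Winter:2008} with the compact embedding $C^{1,\alpha}(\Omega) \hookrightarrow C^\alpha(\Omega)$. The structural constants in \HYP{Fcontinuous}--\HYP{Fhomogeneous} for $F_s$ are clearly bounded independently of $s \in [0,1]$, so the regularity estimates apply with $s$-independent constants.

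For the first condition (compactness of $v \mapsto \mathcal{A}_f(v,s)$ for each fixed $s$), I would observe that a bounded set of $v \in C^\alpha(\Omega)$ produces right-hand sides $\mu(s)v + f$ uniformly bounded in $L^p(\Omega)$. The $C^{1,\alpha}$ estimates then make $\mathcal{A}_f(v,s)$ uniformly bounded in $C^{1,\alpha}(\Omega)$, hence precompact in $C^\alpha(\Omega)$. For continuity, \LEM{u-v-subsolution} applied to the difference of two solutions, combined with the Alexandrov--Bakelman--Pucci inequality, gives $\|\mathcal{A}_f(v_1,s) - \mathcal{A}_f(v_2,s)\|_{L^\infty(\Omega)} \leq C\|v_1 - v_2\|_{L^\infty(\Omega)}$, which bootstraps to $C^\alpha$-continuity via the uniform $C^{1,\alpha}$ bound and interpolation.

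For the second condition, I would argue by contradiction. If it failed, there would exist $\varepsilon_0, C_0 > 0$ and sequences $s_k, t_k \in [0,1]$ with $|s_k - t_k| \to 0$ and $\|v_k\|_{C^\alpha(\Omega)} \leq C_0$ such that $\|u_k - \tilde{u}_k\|_{C^\alpha(\Omega)} \geq \varepsilon_0$, where $u_k := \mathcal{A}_f(v_k,s_k)$ and $\tilde{u}_k := \mathcal{A}_f(v_k,t_k)$. Passing to subsequences I may assume $s_k, t_k \to s_\infty$, $v_k \to v$ uniformly on $\bar{\Omega}$ (Arzelà--Ascoli on the bounded $C^\alpha$ sequence), and $u_k \to u$, $\tilde{u}_k \to \tilde{u}$ in $C^\alpha(\Omega)$ (the uniform $C^{1,\alpha}$ bounds). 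The standard stability theorem for viscosity solutions, together with the joint continuity of $(s,M,p,z,x) \mapsto F_s(M,p,z,x)$ and the continuity of $\mu$ (\LEM{lambda-1-continuous}), then shows both $u$ and $\tilde{u}$ solve the Dirichlet problem
\begin{equation*}
\left\{\begin{aligned}
F_{s_\infty}(D^2 w, Dw, w, x) = {}& \mu(s_\infty) v + f & \mbox{in}& \quad \Omega \\
w = {}& 0 & \mbox{on}& \quad \partial \Omega.
\end{aligned}\right.
\end{equation*}
Since $F_{s_\infty}$ is proper under the standing assumption \EQ{Fs-positive-lambda}, the comparison principle forces $u \equiv \tilde{u}$, contradicting the separation bound.

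The only step that demands real care is the viscosity stability under the simultaneous limits $s_k \to s_\infty$ and $v_k \to v$: given a smooth $\varphi$ touching $u$ from above at $x_0$, the standard perturbation argument produces local maximizers $x_k \to x_0$ of $u_k - \varphi$ where the subsolution inequality for $u_k$ involves $F_{s_k}$, $v_k(x_k)$, and $D^2\varphi(x_k), D\varphi(x_k), u_k(x_k)$; the limit passes cleanly because $F$ is continuous in $x$ by \HYP{Fcontinuous}, $F_s$ depends affinely in $s$ by definition \EQ{F-homotopy}, and $v_k \to v$ uniformly on $\bar{\Omega}$. Everything else is routine uniform bookkeeping on structural constants and regularity bounds.
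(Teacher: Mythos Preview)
Your proposal is correct and follows essentially the same approach as the paper: the compactness for fixed $s$ comes from the $C^{1,\alpha}$ estimate and compact embedding, and the uniform continuity in $s$ is established by the same contradiction argument---extract subsequences, pass to limits via viscosity stability, and invoke the comparison principle for the proper operator $F_{s_\infty}$ under \EQ{Fs-positive-lambda}. The only minor slip is that the continuity of $\mu$ is part of the construction in \PROP{split-the-eigenvalues}, not a consequence of \LEM{lambda-1-continuous}; your added detail on the stability step and the continuity in $v$ (via \LEM{u-v-subsolution} and ABP) is sound and, if anything, slightly more thorough than the paper's own argument.
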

\begin{proof}
For each $s \in [0,1]$, if $u = \mathcal{A}_f(v,s)$, then
\begin{equation}\label{eq:nice-homotopy-est}
\| u \|_{C^{1,\alpha}(\Omega)} \leq C \left( \max_{s\in [0,1]} |\mu(s)| \cdot \| v \|_{L^p(\Omega)} + \| f \|_{L^p(\Omega)} \right) \leq C\left( 1 + \| v \|_{L^\infty(\Omega)} \right).
\end{equation}
Thus the operator $v \mapsto \mathcal{A}_f(v,s)$ is compact for each fixed $s \in [0,1]$.

We have left to show that for each constant $C>0$, the map $(v,s) \mapsto \mathcal{A}_f(v,s)$ is uniformly continuous on the set 
\begin{equation*}
\left\{ v \in C^\alpha(\Omega) \, : \, \| v \|_{C^\alpha(\Omega)} \leq C \right\} \times [0,1].
\end{equation*}
Suppose on the contrary there exist $\varepsilon > 0$, $C>0$ and sequences $\{ s_k \}, \{ t_k \} \subseteq [0,1]$ and $\{ v_k \} \subseteq C^\alpha(\Omega)$ such that
\begin{gather*}
| s_k - t_k | \rightarrow 0, \\
\| v_k \|_{C^\alpha(\Omega)} \leq C,
\end{gather*}
but
\begin{equation}\label{eq:nice-homotopy-1}
\| u_k - \tilde{u}_k \|_{C^\alpha(\Omega)} \geq \varepsilon,
\end{equation}
where we have set $u_k = \mathcal{A}_f(v_k,s_k)$ and $\tilde{u}_k = \mathcal{A}_f(v_k,t_k)$. Recalling the estimate \EQ{nice-homotopy-est} above, and by taking a subsequence, if necessary, we can find $s\in [0,1]$ and functions $v \in C(\bar{\Omega})$ and $u,\tilde{u} \in C^\alpha(\Omega)$ such that
\begin{gather*}
s_k \rightarrow s, \\
t_k \rightarrow s, \\
v_k \rightarrow v \quad \mbox{uniformly on } \bar{\Omega},\\
u_k \rightarrow u \quad \mbox{in } C^\alpha(\Omega),\\
\intertext{and}
\tilde{u}_k \rightarrow \tilde{u} \quad \mbox{in } C^\alpha(\Omega).
\end{gather*}
Passing to limits, we deduce that $u$ and $\tilde{u}$ are both solutions of the problem
\begin{equation*}
\left\{ \begin{aligned}
F_s(D^2u,Du,u,x) = {}& \mu(s) v + f & \mbox{in} & \quad \Omega \\
u = {}& 0 & \mbox{on} & \quad \partial \Omega.
\end{aligned}\right.
\end{equation*}
Recalling \EQ{Fs-positive-lambda}, we conclude that $u = \tilde{u}$, which contradicts \EQ{nice-homotopy-1}. This completes the proof.
\end{proof}

\begin{lem}\label{lem:degree-B-1}
Let $R = 1 + C\left( 1 + \| f \|_{L^p(\Omega)} \right)$, where $C$ is the constant in \EQ{split-the-eigenvalues-est}, and let $W\subseteq C^\alpha(\Omega)$ denote the ball
\begin{equation}\label{eq:degreeW}
W = \left\{ v \in C^\alpha(\Omega) \, : \, \| v \|_{C^\alpha(\Omega)} < R \right\}.
\end{equation}
Then
\begin{equation}\label{eq:degree-at-1}
\deg ( \mathcal{B}_{f,1} , W , 0) = \pm 1.
\end{equation}
\end{lem}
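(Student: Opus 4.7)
The plan is to exploit the fact that at $s=1$, the operator $F_1(M,p,z,x) = -\Gamma\trace(M)$ is linear and a scalar multiple of the Dirichlet Laplacian, so that $\mathcal{A}_f(\cdot,1)$ reduces to the solution operator of the \emph{linear} Poisson-type problem $-\Gamma\Delta u - \mu(1)u = f$. I will then invoke \THM{leray-schauder-degree}\ENUM{degree-injective}, for which it suffices to show that $\mathcal{B}_{f,1}$ is injective on $C^\alpha(\Omega)$ and that $0 \in \mathcal{B}_{f,1}(W)$.

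The first key observation is that $\mu(1)$ is not in the Dirichlet spectrum of $-\Gamma\Delta$ on $\Omega$. Indeed, the eigenvalues of $F_1$ are precisely $\Gamma\lambda_k(-\Delta,\Omega)$ for $k \geq 1$, so $\lambda_1^+(F_1,\Omega) = \lambda_1^-(F_1,\Omega) = \Gamma\lambda_1(-\Delta,\Omega)$ and $\lambda_2(F_1,\Omega) = \Gamma\lambda_2(-\Delta,\Omega)$; by \PROP{split-the-eigenvalues}, $\mu(1)$ lies strictly between these two values. Injectivity of $\mathcal{B}_{f,1}$ follows: if $\mathcal{B}_{f,1}(v_1) = \mathcal{B}_{f,1}(v_2)$ and $u_i = \mathcal{A}_f(v_i,1)$, then $v_1 - v_2 = u_1 - u_2$, and $w := u_1 - u_2 \in C^{1,\alpha}(\Omega) \cap C_0(\bar\Omega)$ solves $-\Gamma\Delta w = \mu(1)(v_1 - v_2) = \mu(1)w$ in $\Omega$ (in the viscosity, hence classical, sense). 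Since $\mu(1)$ is not an eigenvalue of $-\Gamma\Delta$, $w \equiv 0$, whence $v_1 = v_2$.

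To produce $v \in W$ with $\mathcal{B}_{f,1}(v) = 0$, I would solve the linear Dirichlet problem $-\Gamma\Delta u - \mu(1)u = f$ in $\Omega$, $u = 0$ on $\partial\Omega$, whose unique solution $u \in W^{2,p}(\Omega) \cap W^{1,p}_0(\Omega) \subseteq C^{1,\alpha}(\bar\Omega)$ exists by the Fredholm alternative since $\mu(1)$ is not an eigenvalue and $f \in L^p(\Omega)$ with $p > n$. Setting $v := u$ yields $\mathcal{A}_f(v,1) = v$, i.e.\ $\mathcal{B}_{f,1}(v) = 0$. The a priori estimate \EQ{split-the-eigenvalues-est} applied at $s = 1$ bounds $\|v\|_{C^{1,\alpha}(\Omega)} \leq C(1 + \|f\|_{L^p(\Omega)})$, and since the $C^\alpha$ norm is controlled by the $C^{1,\alpha}$ norm (absorbing any embedding constant into $C$), we have $\|v\|_{C^\alpha(\Omega)} < R$, so $v \in W$. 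Applying \THM{leray-schauder-degree}\ENUM{degree-injective} to $\mathcal{B}_{f,1}$ at the point $0 \in \mathcal{B}_{f,1}(W)$ gives $\deg(\mathcal{B}_{f,1}, W, 0) = \pm 1$.

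There is no substantive obstacle here, since the calculation reduces cleanly to classical linear spectral theory at $s = 1$. The only minor point deserving care is the comparison between $C^\alpha$ and $C^{1,\alpha}$ norms in checking $v \in W$, which is purely technical (a compact embedding with bounded constant on the smooth bounded domain $\Omega$). The conceptual work was already done in \PROP{split-the-eigenvalues}, which guarantees that $\mu(s)$ avoids the spectrum of $F_s$ all along the homotopy path, with the endpoint $s = 1$ chosen precisely so that the operator is linear and invertible in the usual Fredholm sense.
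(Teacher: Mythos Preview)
Your proof is correct and follows essentially the same approach as the paper's: both arguments reduce to the fact that at $s=1$ the operator is linear with $\mu(1)$ lying strictly between $\lambda_1(-\Gamma\Delta,\Omega)$ and $\lambda_2(-\Gamma\Delta,\Omega)$, then invoke Fredholm theory and the a priori estimate \EQ{split-the-eigenvalues-est} before applying \THM{leray-schauder-degree}\ENUM{degree-injective}. The only cosmetic difference is that the paper phrases the Fredholm step as bijectivity of $\mathcal{B}_{f,1}$ (by solving $-\Gamma\Delta w - \mu(1)w = \mu(1)u + f$ for each $u$), while you show injectivity and separately exhibit a preimage of $0$; these are equivalent formulations of the same linear argument.
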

\begin{proof}
We will show $\mathcal{B}_{f,1}$ is injective and then apply \THM{leray-schauder-degree}\ENUM{degree-injective}. As mentioned above, using the definitions, it is easy to see the equation $u = \mathcal{B}_{f,1}(v)$ is equivalent to the function $w = v - u$ satisfying the Dirichlet problem
\begin{equation*}
\left\{ \begin{aligned}
-\Gamma \Delta w = {}& \mu(1) v + f & \mbox{in} & \quad \Omega \\
w = {}& 0 & \mbox{on} & \quad \partial \Omega,
\end{aligned}\right.
\end{equation*}
which can be rewritten as
\begin{equation}\label{eq:DP-w-2}
\left\{ \begin{aligned}
-\Gamma \Delta w - \mu(1)w= {}& \mu(1) u + f & \mbox{in} & \quad \Omega \\
w = {}& 0 & \mbox{on} & \quad \partial \Omega.
\end{aligned}\right.
\end{equation}
Since $\lambda_1(-\Gamma \Delta, \Omega) < \mu(1) < \lambda_2(- \Gamma \Delta, \Omega)$, the Fredholm theory for linear compact operators implies \EQ{DP-w-2} has a unique solution $w\in W^{2,p}(\Omega)$ for each $u\in C^\alpha(\Omega)$. Hence the equation
\begin{equation*}
\mathcal{B}_{f,1}(v) = u
\end{equation*}
has a unique solution $v\in C^\alpha(\Omega)$ for each $u \in C^\alpha(\Omega)$. That is, $\mathcal{B}_{f,1}$ is bijective. According to estimate \EQ{split-the-eigenvalues-est}, the unique solution $v\in C^\alpha(\Omega)$ of
\begin{equation*}
\mathcal{B}_{f,1}(v) = 0
\end{equation*}
must satisfy the estimate $\| v \|_{C^\alpha(\Omega)} < R$, and hence $v \in W$. Now \EQ{degree-at-1} follows from an application of \THM{leray-schauder-degree}\ENUM{degree-injective}.
\end{proof}

We are now in a position to confirm \EQ{degree-goal}.

\begin{prop} \label{prop:degree-B-0}
Let $W$ be as in \LEM{degree-B-1}. Then
\begin{equation}\label{eq:degree-at-0}
\deg ( \mathcal{B}_{f,0} , W , 0) = \pm 1.
\end{equation}
\end{prop}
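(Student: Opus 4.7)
The plan is to deduce \EQ{degree-at-0} from \LEM{degree-B-1} via the homotopy invariance property \THM{leray-schauder-degree}\ENUM{homotopy}, applied to the family $\mathcal{B}_{f,s} = \mathcal{I} - \mathcal{A}_f(\cdot, s)$. Since \LEM{nice-homotopy} already supplies the homotopy hypothesis, the only thing left to verify is that $0 \notin \mathcal{B}_{f,s}(\partial W)$ for every $s \in [0,1]$, where $W$ is the open ball of radius $R = 1 + C(1+\|f\|_{L^p(\Omega)})$ in $C^\alpha(\Omega)$ and $C$ is the constant from \PROP{split-the-eigenvalues}.

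To see this, suppose for some $s \in [0,1]$ and $v \in C^\alpha(\Omega)$ we have $\mathcal{B}_{f,s}(v) = 0$, i.e.\ $v = \mathcal{A}_f(v,s)$. Unwinding the definition of $\mathcal{A}_f$, this is equivalent to saying that $v$ itself is a viscosity solution of the Dirichlet problem
\begin{equation*}
\left\{ \begin{aligned}
F_s(D^2 v, Dv, v, x) = {}& \mu(s) v + f & \mbox{in} & \quad \Omega \\
v = {}& 0 & \mbox{on} & \quad \partial \Omega.
\end{aligned}\right.
\end{equation*}
By \PROP{split-the-eigenvalues} applied with $g \equiv |f|$, we obtain the \emph{uniform} bound
\begin{equation*}
\|v\|_{C^{1,\alpha}(\Omega)} \leq C(1 + \|f\|_{L^p(\Omega)}) < R,
\end{equation*}
so in particular $\|v\|_{C^\alpha(\Omega)} < R$, and hence $v$ lies in the interior of $W$, not on $\partial W$. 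This shows $0 \notin \mathcal{B}_{f,s}(\partial W)$ for every $s \in [0,1]$.

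Combining \LEM{nice-homotopy}, the above non-degeneracy on $\partial W$, and \THM{leray-schauder-degree}\ENUM{homotopy}, the map $s \mapsto \deg(\mathcal{B}_{f,s}, W, 0)$ is constant on $[0,1]$. In view of \LEM{degree-B-1}, its value is $\pm 1$, which yields \EQ{degree-at-0}. The only non-routine ingredient is the a priori estimate of \PROP{split-the-eigenvalues}, which is precisely what separates $\mu(s)$ from the spectrum of $F_s$ uniformly in $s$; it is the reason the choice of $R$ in \LEM{degree-B-1} suffices along the entire homotopy, and hence the main obstacle that has already been overcome.
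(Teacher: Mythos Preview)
Your proof is correct and follows essentially the same route as the paper: use the a~priori estimate \EQ{split-the-eigenvalues-est} to rule out zeros of $\mathcal{B}_{f,s}$ on $\partial W$, then invoke \LEM{nice-homotopy} and homotopy invariance of the degree to carry \LEM{degree-B-1} over to $s=0$. The paper's own argument is the same, only stated more tersely.
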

\begin{proof}
According to \EQ{split-the-eigenvalues-est}, we have $0 \not\in B_{f,s}(\partial W )$ for every $s\in [0,1]$. Now apply \LEM{nice-homotopy} and \THM{leray-schauder-degree}\ENUM{homotopy} to deduce
\begin{equation*}
\mbox{the map} \quad s \mapsto \deg \left( \mathcal{B}_{f,s}, W, 0 \right) \quad \mbox{is constant}.
\end{equation*}
Recalling \EQ{degree-at-1}, we conclude
\begin{equation*}
\deg \left( \mathcal{B}_{f,s}, W, 0 \right) = \deg \left( \mathcal{B}_{f,1}, W, 0 \right) = \pm 1
\end{equation*}
for every $s\in [0,1]$. In particular, $\deg ( \mathcal{B}_{f,0} , W , 0) = \pm 1$.
\end{proof}

We will now complete the proof of \THM{existence-past-lambda-1}.

\begin{proof}[Proof of \THM{existence-past-lambda-1}]
According to \THM{leray-schauder-degree}\ENUM{degree-not-zero} and \PROP{degree-B-0}, there exists a function $u \in W \subseteq C^\alpha(\Omega)$ such that $\mathcal{B}_{f,0}(u) = 0$. Recalling the definition of $\mathcal{B}_{f,s}$, we see immediately that $u$ is a solution of the Dirichlet problem
\begin{equation}
\left\{ \begin{aligned}
F(D^2u,Du,u,x) = {}& \mu(0) u + f  & \mbox{in} & \quad \Omega \\
u = {}& 0 & \mbox{on} & \quad \partial \Omega. 
\end{aligned}\right.
\end{equation}
According to the $C^{1,\alpha}$ estimates, $u\in C^{1,\alpha}(\Omega)$. Recalling $\mu(0) = \lambda$, the proof is complete.
\end{proof}

\begin{rem}
I do not know whether the solutions of \EQ{Dirichlet-problem} given by \THM{existence-past-lambda-1} are unique, in general, even in the case that $F$ is convex in $(M,p,z)$.
\end{rem}



\section{An anti-maximum principle} \label{sec:anti-maximumprinciple}

The purpose of this section is to establish \THM{AMP}. Our argument, similar to the method employed in 
\cite{Birindelli:1995}, is based on Hopf's Lemma and the following two nonexistence results.

\begin{prop}\label{prop:PDEnonexistence}
Assume $\lambda \geq \lambda_1^+(F,\Omega)$ and $f\in C(\Omega)$ is such that $f\geq 0$ and $f \not\equiv 0$. Then the problem
\begin{equation}\label{eq:PDEnonexistence}
\left\{ \begin{aligned}
F(D^2u,Du,u,x) \geq {}& \lambda u + f & \mbox{in} & \quad \Omega \\
u \geq {}& 0 & \mbox{on} & \quad \partial \Omega
\end{aligned}\right.
\end{equation}
has no nonnegative solution $u\in C(\bar{\Omega})$. Moreover, under the additional assumption
\begin{equation*}
\lambda_1^+(F,\Omega) \leq \lambda \leq \lambda_1^-(F,\Omega),
\end{equation*}
problem \EQ{PDEnonexistence} does not possess a solution $u\in C(\bar{\Omega})$.
\end{prop}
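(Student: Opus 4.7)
The plan is to compare the putative solution $u$ against the principal eigenfunctions $\varphi^\pm_1$ using \THM{BNV} applied to the shifted operator $G_\lambda(M,p,z,x):=F(M,p,z,x)-\lambda z$. The key observation is that $u$ is a viscosity supersolution of $G_\lambda(D^2u,Du,u,x)\geq f\geq 0$, while by \HYP{Fhomogeneous}, for every $t>0$,
\[
G_\lambda(D^2(t\varphi^\pm_1),D(t\varphi^\pm_1),t\varphi^\pm_1,x) \;=\; t(\lambda^\pm_1-\lambda)\varphi^\pm_1,
\]
which is nonpositive exactly when $\lambda^\pm_1\leq\lambda$ and $\varphi^\pm_1$ has the matching sign. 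Thus $t\varphi^+_1$ is a subsolution of $G_\lambda=0$ in the setting of Part 1, and $t\varphi^-_1$ is a subsolution of $G_\lambda=0$ in the setting of Part 2.

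For Part 1, I would first dispense with the degenerate case $u\equiv 0$: \HYP{Fhomogeneous} gives $F(0,0,0,x)=0$, so testing the supersolution condition with the zero test function yields $0\geq f$, contradicting $f\not\equiv 0$. With $u\geq 0$ and $u\not\equiv 0$ in hand, I would upgrade to $u>0$ in $\Omega$ by majorizing $F$ from above via \HYP{Felliptic} to obtain $\Pucci(D^2u)+\consp|Du|+(\consz-\lambda)u\geq 0$ and then applying Hopf's Lemma (\APP{hopf}). Next I would invoke \THM{BNV}, case (ii), with $u_{\mathrm{BNV}}:=t\varphi^+_1$ (for $t>0$ chosen large enough that $t\varphi^+_1(\tilde x)>u(\tilde x)$ at some $\tilde x\in\Omega$) and $v_{\mathrm{BNV}}:=u$. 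The theorem forces $u\equiv c\varphi^+_1$ for some $c>0$, whence by homogeneity $F(D^2u,Du,u,x)=\lambda^+_1 u$, so the original supersolution inequality reduces to $(\lambda^+_1-\lambda)u\geq f$. The left side is $\leq 0$ while the right side is $\geq 0$ and not identically zero — contradiction.

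For Part 2, under the extra hypothesis $\lambda\leq\lambda^-_1$, I would branch on whether $u\geq 0$ in $\Omega$. If so, Part 1 already finishes the job. Otherwise $u(x_0)<0$ at some interior point $x_0$, and I would invoke \THM{BNV}, case (i), with $u_{\mathrm{BNV}}:=t\varphi^-_1$ (a subsolution of $G_\lambda=0$ by the parity observation above, and negative in $\Omega$) and $v_{\mathrm{BNV}}:=u$, choosing $t>0$ small enough that $|t\varphi^-_1(x_0)|<|u(x_0)|$ so that $t\varphi^-_1(x_0)>u(x_0)$. The remaining hypotheses of case (i) are immediate ($f_{\mathrm{BNV}}:=0\leq 0$, $v_{\mathrm{BNV}}=u\geq 0$ on $\partial\Omega$), so the theorem gives $u\equiv c\varphi^-_1$ for some $c>0$. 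Substituting into the PDE yields $(\lambda^-_1-\lambda)u\geq f$, with the left side $\leq 0$ (since $u<0$ and $\lambda^-_1\geq\lambda$) and the right side $\geq 0$, $\not\equiv 0$ — again a contradiction.

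The main obstacle I anticipate is the upgrade from $u\geq 0$ to $u>0$ in Part 1, which is essential for the strict-positivity hypothesis of case (ii) of \THM{BNV}. This requires combining the Pucci majorization from \HYP{Felliptic} with Hopf's Lemma from the appendix; the sign of the zeroth-order term $(\consz-\lambda)$ is not fixed, but the viscosity Hopf statement is robust to this. Modulo that step, the rest is bookkeeping: pick the correct sign of the principal eigenfunction for comparison, verify the hypotheses of the appropriate case of \THM{BNV}, and extract the contradiction from the reduced inequality $(\lambda^\pm_1-\lambda)u\geq f$.
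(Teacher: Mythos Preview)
Your proposal is correct and follows essentially the same route as the paper: Hopf's Lemma to upgrade $u\geq 0$ to $u>0$, then \THM{BNV} (case (ii) with $t\varphi_1^+$ as subsolution, $u$ as positive supersolution) to force $u\equiv c\varphi_1^+$, and in the second part \THM{BNV} (case (i) with $t\varphi_1^-$) once $u$ is known to be negative somewhere. The only cosmetic difference is that the paper first observes, directly from the definition of $\lambda_1^+(F,\Omega)$ as a supremum over positive supersolutions, that $u>0$ forces $\lambda\leq\lambda_1^+$ and hence $\lambda=\lambda_1^+$; you bypass this by noting $t\varphi_1^+$ is already a subsolution of $G_\lambda=0$ for all $\lambda\geq\lambda_1^+$, which is slightly more direct.
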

\begin{proof}
Suppose a solution $u\geq 0$ of \EQ{PDEnonexistence} exists under the assumptions that $\lambda \geq \lambda_1^+(F,\Omega)$ and $f\in C(\Omega)$ is such that $f\geq 0$. We claim $f\equiv 0$. If $u \equiv 0$ we have nothing to show, so suppose $u\not\equiv 0$. According to Hopf's Lemma, $u > 0$. The definition of $\lambda^+_1(F,\Omega)$ implies $\lambda \leq \lambda^+_1(F,\Omega)$, from which we deduce $\lambda=\lambda^+_1(F,\Omega)$. Applying \THM{BNV}, we see that $u$ is a positive constant multiple of the eigenfunction $\varphi^+_1$. This implies $f \equiv 0$, as desired.

Now suppose in addition that $\lambda \leq \lambda^-_1(F,\Omega)$, and that there exists a solution $u$ of \EQ{PDEnonexistence}. According to our argument above, if $f\not \equiv 0$, then $u$ must be negative somewhere in $\Omega$. Applying \THM{BNV}, we see that $u\equiv t\varphi^-_1$ for some $t > 0$. In particular, this implies $\lambda = \lambda^-_1(F,\Omega)$ and $f\equiv 0$, a contradiction.
\end{proof}

Arguing in a similar fashion, we obtain the following proposition.

\begin{prop}\label{prop:PDEnonexistence-2}
Assume $\lambda \geq \lambda_1^-(F,\Omega)$ and $f\in C(\Omega)$ is such that $f\leq 0$ and $f\not\equiv 0$. Then the problem
\begin{equation}\label{eq:PDEnonexistence-2}
\left\{ \begin{aligned}
F(D^2u,Du,u,x) \leq {}& \lambda u + f & \mbox{in} & \quad \Omega \\
u \leq {}& 0 & \mbox{on} & \quad \partial \Omega
\end{aligned}\right.
\end{equation}
has no nonpositive solution $u \in C(\bar{\Omega})$. Moreover, under the additional assumption 
\begin{equation}\label{eq:equal-eigenvalues}
\lambda_1^-(F,\Omega) \leq \lambda \leq  \lambda_1^+(F,\Omega),
\end{equation}
problem \EQ{PDEnonexistence-2} does not possess a solution $u \in C(\bar{\Omega})$.
\end{prop}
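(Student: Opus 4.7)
I would follow the sign-reversed analogue of the argument used for \PROP{PDEnonexistence}. For the first assertion, suppose a nonpositive $u \in C(\bar{\Omega})$ solves \EQ{PDEnonexistence-2}; I plan to derive $f \equiv 0$, contradicting the hypotheses. If $u \equiv 0$, the inequality collapses to $0 \leq f$, which combined with $f \leq 0$ immediately forces $f \equiv 0$. Otherwise $-u$ is a nontrivial nonnegative function that, by \HYP{Felliptic}, is a supersolution of a uniformly elliptic equation, so Hopf's Lemma (\THM{hopf}) applied to $-u$ yields $u < 0$ in $\Omega$. Using $f \leq 0$, the PDE inequality in \EQ{PDEnonexistence-2} becomes $F(D^2u, Du, u, x) \leq \lambda u$ in $\Omega$, so the definition of $\lambda^-_1(F, \Omega)$ makes $\lambda$ admissible in its defining supremum, giving $\lambda \leq \lambda^-_1(F, \Omega)$; combined with the standing hypothesis this forces $\lambda = \lambda^-_1(F, \Omega)$.

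Next I would invoke \THM{BNV}\ENUM{teh-1} applied to the shifted operator $G_\lambda(M, p, z, x) = F(M, p, z, x) - \lambda z$. Taking $u$ as the subsolution (with $G_\lambda$-value bounded above by $f \leq 0$) and $s \varphi^-_1$ as the supersolution (with $G_\lambda$-value equal to $0$ by \HYP{Fhomogeneous}), for $s > 0$ chosen large enough that $u(\tilde x) > s \varphi^-_1(\tilde x)$ at some $\tilde x \in \Omega$, the conclusion $s \varphi^-_1 \equiv t u$ shows $u$ is a positive constant multiple of $\varphi^-_1$. Substituting back into \EQ{PDEnonexistence-2} gives $\lambda u \leq \lambda u + f$, i.e.\ $f \geq 0$, which with $f \leq 0$ yields the sought contradiction $f \equiv 0$.

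For the second assertion, assume \EQ{equal-eigenvalues} and that $u \in C(\bar{\Omega})$ is any solution of \EQ{PDEnonexistence-2}. Since $\lambda \geq \lambda^-_1(F, \Omega)$, the first part rules out $u$ being nonpositive, so $u(x_0) > 0$ at some $x_0 \in \Omega$. The hypothesis $\lambda \leq \lambda^+_1(F, \Omega)$ together with \HYP{Fhomogeneous} gives $G_\lambda(s \varphi^+_1) = (\lambda^+_1 - \lambda) s \varphi^+_1 \geq 0$ for every $s > 0$. Choosing $s > 0$ small enough that $u(x_0) > s \varphi^+_1(x_0)$, \THM{BNV}\ENUM{teh-2} applied with subsolution $u$ and supersolution $s \varphi^+_1$ produces $u \equiv c \varphi^+_1$ for some $c > 0$; then \EQ{PDEnonexistence-2} reduces to $(\lambda^+_1 - \lambda) u \leq f \leq 0$, and since $u > 0$ and $\lambda \leq \lambda^+_1$ the left side is nonnegative, forcing $\lambda = \lambda^+_1$ and $f \equiv 0$, again contradicting $f \not\equiv 0$.

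The only delicate step I expect is the scaling setup for \THM{BNV}: because $F$ is merely positively homogeneous, the scaling parameter $s$ must stay positive, and the direction of the required strict inequality $u(\tilde x) > v(\tilde x)$ forces me to take $s$ large when comparing with the negative eigenfunction (both functions are negative, and only a large dilation of $\varphi^-_1$ lies ``below'' $u$ somewhere) and $s$ small when comparing with the positive one (so that $s \varphi^+_1(x_0)$ lies below the positive value $u(x_0)$). Everything else is a routine mirroring of the proof of \PROP{PDEnonexistence}.
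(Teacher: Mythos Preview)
Your argument is correct and is precisely the sign-reversed analogue of the paper's proof of \PROP{PDEnonexistence}; the paper itself does not spell this out, writing only ``Arguing in a similar fashion, we obtain the following proposition.'' One cosmetic remark: \THM{hopf} is stated for nonpositive subsolutions, so you should apply it directly to $u$ (which satisfies $\pucci(D^2u)-\consp|Du|-(\consz+|\lambda|)\,|u|\leq 0$ via \HYP{Felliptic} and $f\leq 0$) rather than to $-u$; the conclusion $u<0$ in $\Omega$ is of course the same.
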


We are now ready for the proof of \THM{AMP}.

\begin{proof}[Proof of of \THM{AMP}]
We will prove only \ENUM{AMP1}, since the argument for \ENUM{AMP2} is similar. Assume on the contrary there are sequences $\{ \lambda_k\} \subseteq \left( \lambda^-_1(F,\Omega), \infty\right)$ and $\{ u_k \}\subseteq C(\bar{\Omega})$ such that
\begin{equation*}
\lambda_k  \to \lambda_1^-(F,\Omega)
\end{equation*}
and $u_k$ satisfies
\begin{equation}\label{eq:AMB-false-k}
\left\{ \begin{aligned}
F(D^2u_k,Du_k,u_k,x) = {}& \lambda_k u_k + f & \mbox{in} & \quad \Omega \\
u_k = {}& 0 & \mbox{on} & \quad \partial \Omega;
\end{aligned}\right.
\end{equation}
but each $u_k$ is nonnegative somewhere in $\Omega$. Select $x_k\in \Omega$ such that $u_k$ attains a nonnegative local maximum at $x_k$. In particular, we have
\begin{equation}\label{eq:sequenceantimax1}
u_k(x_k) \geq 0 \quad \mbox{and} \quad Du_k(x_k) = 0.
\end{equation}
By taking a subsequence, if necessary, we may assume
\begin{equation}\label{eq:sequenceantimax2}
x_k \rightarrow x_0
\end{equation}
for some $x_0\in \bar{\Omega}$. We now assert
\begin{equation}\label{eq:unboundeduk}
\sup_{k\geq 1} \| u_k \|_{L^\infty(\Omega)} = \infty.
\end{equation}
On the contrary, suppose
\begin{equation*}
\sup_{k\geq 1} \| u_k \|_{L^\infty(\Omega)} < \infty.
\end{equation*}
By once again applying the $C^{1,\alpha}$ estimates and taking a subsequence, we find a function $u \in C^{1,\alpha}(\Omega)$ such that $u_k \rightarrow u$ uniformly on $\bar{\Omega}$. Passing to limits in \EQ{AMB-false-k}, we see that $u$ is a solution of the problem
\begin{equation*}
\left\{ \begin{aligned}
F(D^2u,Du,u,x) = {}& \lambda_1^-(F,\Omega) u + f & \mbox{in} & \quad \Omega \\
u = {}& 0 & \mbox{on} & \quad \partial \Omega,
\end{aligned}\right.
\end{equation*}
in violation of \PROP{PDEnonexistence}. We have demonstrated \EQ{unboundeduk}.

\medskip

Without loss of generality, we may assume $\| u_k \|_{L^\infty(\Omega)} \rightarrow \infty$. Normalize by setting
\begin{equation*}
v_k = u_k/\| u_k \|_{L^\infty(\Omega)},
\end{equation*}
and verify that $v_k$ satisfies
\begin{equation}\label{eq:AMPvk}
\left\{ \begin{aligned}
F(D^2v_k,Dv_k,v_k,x) = {}& \lambda_k v_k + f/\|u_k\|_{L^\infty(\Omega)} & \mbox{in} & \quad \Omega \\
v_k = {}& 0 & \mbox{on} & \quad \partial \Omega.
\end{aligned}\right.
\end{equation}
By using the $C^{1,\alpha}$ estimates again, and taking a subsequence, we may assume 
\begin{equation}\label{eq:AMPvktov}
v_k \rightarrow v \quad \mbox{in} \quad C^1(\bar{\Omega})
\end{equation}
for some $v\in C^{1,\alpha}(\Omega)$. Passing to limits in \EQ{AMPvk}, we deduce that $v$ is a solution of
\begin{equation*}
\left\{ \begin{aligned}
F(D^2v,Dv,v,x) = {}& \lambda_1^-(F,\Omega) v & \mbox{in} & \quad \Omega \\
v = {}& 0 & \mbox{on} & \quad \partial \Omega.
\end{aligned}\right.
\end{equation*}
Recalling $\| v \|_{L^\infty(\Omega)} =1$, we have $v\equiv t\varphi^-_1$ for $t=\pm 1$. We will finish the proof by demonstrating that both $t=1$ and $t=-1$ lead to contradictions.

If $t =-1$ then $v \equiv - \varphi^-_1 > 0$ in $\Omega$. Choose a compact set $K\subseteq \Omega$ large enough that the maximum principle holds for the operator $F - \lambda_k$ in $\Omega\backslash K$, for each $k \geq 1$. Recalling \EQ{AMPvktov}, if we take $k$ to be sufficiently large, then $v_k > 0$ on $K$. Thus $v_k \geq 0$ on $\Omega$ due to the assumption $f \geq 0$ and the maximum principle. Recalling \EQ{AMPvk}, we derive a contradiction to \PROP{PDEnonexistence}. Thus $t = -1$ is impossible.

Finally, consider the case $t = 1$, which implies $v < 0$ in $\Omega$. Using \EQ{sequenceantimax1} and \EQ{sequenceantimax2}, we deduce $v(x_0) = 0$, and hence $x_0\in \partial \Omega$. However, we also deduce $Dv(x_0) = 0$, in violation of Hopf's Lemma. This rules out the possibility $t = 1$, which completes the proof of the theorem.
\end{proof}

We now state a more general anti-maximum principle. Consider a family $\left\{ F_s \, : \, s \in [0,1] \right\}$ of nonlinear operators such that for each fixed $s \in [0,1]$, the nonlinearity $F_s$ satisfies \HYP{Fcontinuous}, \HYP{Felliptic}, and \HYP{Fhomogeneous}. We also assume that the map
\begin{equation*}
(M,p,z,x,s) \mapsto F_s(M,p,z,x)
\end{equation*}
is continuous on $\Sy \times \R^n \times \R \times \Omega \times [0,1]$. An examination of the proof of \THM{AMP} convinces us the same argument may be employed, with only minor modifications, to establish the following result.

\begin{thm}
Assume $p > n$ and $f\in C(\Omega\times [0,1])$ is such that $f\geq 0$, $f(\cdot,s) \in L^p(\Omega)$ and $f(\cdot,s) \not\equiv 0$ for every $s\in [0,1]$. Suppose
\begin{equation*}
\lambda^+_1(F_s,\Omega) \leq \lambda^-_1(F_s,\Omega) < \lambda^-_1(F_0,\Omega) \quad \mbox{for all} \quad 0 < s \leq 1.
\end{equation*}
Then there exists a constant $\eta = \eta(f) > 0$ such that for every $0 < s < \eta$, any solution $u\in C(\bar{\Omega})$ of the Dirichlet problem
\begin{equation*}
\left\{ \begin{aligned}
F_s(D^2u,Du,u,x) = {}& \lambda^-_1(F_0,\Omega) u + f(\cdot,s) & \mbox{in} & \quad \Omega \\
u = {}& 0 & \mbox{on} & \quad \partial \Omega
\end{aligned}\right.
\end{equation*}
must satisfy $u < 0$ in $\Omega$.
\end{thm}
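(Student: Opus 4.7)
The plan is to argue by contradiction, tracking the proof of \THM{AMP} almost verbatim; the only genuinely new ingredient is that the nonlinearity itself varies with $k$, which requires invoking the joint continuity of $F_s$ in $s$ at each limiting step. Assume no such $\eta$ exists; then one can pick sequences $s_k \downarrow 0$ and solutions $u_k \in C(\bar{\Omega})$ of the associated Dirichlet problem, along with points $x_k \in \Omega$ at which $u_k$ attains a nonnegative local maximum, so that $u_k(x_k) \geq 0$ and $Du_k(x_k) = 0$. Passing to a subsequence, $x_k \to x_0 \in \bar{\Omega}$.

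First I would show $\|u_k\|_{L^\infty(\Omega)} \to \infty$. If this sequence were bounded, the uniform $C^{1,\alpha}$ estimates (which depend only on the ellipticity constants $\gamma,\Gamma,\consp,\consz$ shared by every $F_s$) together with the joint continuity $F_{s_k} \to F_0$ would produce a subsequential limit $u \in C^{1,\alpha}(\Omega)$ satisfying $F_0(D^2u,Du,u,x) = \lambda_1^-(F_0,\Omega)\, u + f(\cdot,0)$ in $\Omega$ and $u = 0$ on $\partial \Omega$. Since the inequality $\lambda^+_1(F_s,\Omega) \leq \lambda^-_1(F_s,\Omega)$ passes to the limit $s \to 0$ (the maps $s \mapsto \lambda_1^\pm(F_s,\Omega)$ being continuous by the same argument as in \LEM{lambda-1-continuous}), and $f(\cdot,0) \geq 0$ with $f(\cdot,0)\not\equiv 0$, the existence of such a $u$ contradicts the second part of \PROP{PDEnonexistence}.

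Next I would normalize $v_k := u_k / \|u_k\|_{L^\infty(\Omega)}$, observe that $v_k$ solves $F_{s_k}(D^2v_k,Dv_k,v_k,x) = \lambda_1^-(F_0,\Omega)\,v_k + f(\cdot,s_k)/\|u_k\|_{L^\infty(\Omega)}$ with zero boundary data, and extract via the $C^{1,\alpha}$ estimates a subsequential $C^1(\bar{\Omega})$ limit $v$ solving $F_0(D^2v,Dv,v,x) = \lambda_1^-(F_0,\Omega)\,v$ in $\Omega$ with $v = 0$ on $\partial \Omega$ and $\|v\|_{L^\infty(\Omega)} = 1$. The simplicity clause of \THM{eigenvalues} then forces $v \equiv t\,\varphi_1^-(F_0,\Omega)$ for $t = \pm 1$.

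It remains to rule out both signs. If $t = -1$, then $v > 0$ in $\Omega$; choosing a compact $K \subseteq \Omega$ whose complement has measure below the ABP small-domain constant common to the family $\{F_{s_k} - \lambda_1^-(F_0,\Omega)\}$, we get $v_k > 0$ on $K$ for large $k$, and the small-domain maximum principle in $\Omega \backslash K$ upgrades this to $u_k \geq 0$ throughout $\Omega$, which contradicts the first part of \PROP{PDEnonexistence} because $\lambda_1^-(F_0,\Omega) > \lambda_1^-(F_{s_k},\Omega) \geq \lambda_1^+(F_{s_k},\Omega)$. If instead $t = 1$, then $v < 0$ in $\Omega$; but $v_k(x_k) \geq 0$ and $x_k \to x_0$ force $v(x_0) = 0$, so $x_0 \in \partial \Omega$, while $Dv_k(x_k) = 0$ and $C^1$ convergence give $Dv(x_0) = 0$, in violation of Hopf's Lemma. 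The main obstacle I anticipate is technical rather than conceptual: verifying that the $C^{1,\alpha}$ bounds and the ABP small-domain constant may be chosen uniformly in $k$ across the varying family, which is where the assumption that the constants $\gamma,\Gamma,\consp,\consz$ in \HYP{Felliptic} are shared by every $F_s$ (implicit in our joint continuity hypothesis on $(M,p,z,x,s) \mapsto F_s(M,p,z,x)$) is essential.
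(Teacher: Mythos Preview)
Your proposal is correct and follows essentially the same approach the paper indicates: the paper does not give a separate proof but states that ``the same argument may be employed, with only minor modifications,'' and your outline is precisely that argument, with the modifications (varying nonlinearity $F_{s_k}$, joint continuity for the limit passages, uniform ellipticity constants for the $C^{1,\alpha}$ and ABP estimates) correctly identified.
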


We leave the statement of the corresponding generalization of \THM{AMP}\ENUM{AMP2} to the reader.


\appendix

\section{Hopf's Lemma} \label{app:hopf}

Because we could not find a short and simple proof in the literature, we present a complete proof of Hopf's Lemma for viscosity solutions. 

\begin{thm}[Hopf's Lemma]\label{thm:hopf}
Assume the domain $\Omega$ satisfies an interior sphere condition. Suppose $u\in C(\bar{\Omega})$ is such that $u\not\equiv 0$ and $u\leq 0$ in $\Omega$, and suppose that $u$ is a viscosity subsolution of
\begin{equation}\label{eq:hopfeq}
\pucci(D^2u) - \consp |Du| - \consz |u| \leq 0 \inOmega.
\end{equation}
Then for each $x_0\in \partial \Omega$ such that $u(x_0)=0$, we have
\begin{equation}
\limsup_{h\to 0+} \frac{u(x_0) - u(x_0+h \xi)}{h} > 0
\end{equation}
for every $\xi \in \R^n$ such that $\xi \cdot \nu < 0$, where $\nu$ is the outer unit normal vector to any sphere which touches $\partial \Omega$ from the interior at $x_0$.
Moreover, $u < 0$ in $\Omega$.
\end{thm}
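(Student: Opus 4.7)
My plan is to construct an annular barrier function and then use it twice: first to prove the strong maximum principle $u<0$ in $\Omega$, and then at the boundary to extract the one-sided derivative estimate. A useful initial observation is that since $u\leq 0$ in $\Omega$, the hypothesis \EQ{hopfeq} can be rewritten as $\pucci(D^2u)-\consp|Du|+\consz u\leq 0$ in the viscosity sense, and the operator $G(M,p,z):=\pucci(M)-\consp|p|+\consz z$ is proper and therefore obeys the standard comparison principle between viscosity subsolutions and classical supersolutions. Given a ball $B_R(y)$, put $w(x):=e^{-\alpha|x-y|^2}-e^{-\alpha R^2}$ and $v:=-\varepsilon w$; then $v\in C^\infty(\R^n)$, $v=0$ on $\partial B_R(y)$, $v<0$ in $B_R(y)$, and $v>0$ outside $\overline{B_R(y)}$. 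For $\alpha>2/R^2$ the Hessian $D^2v$ has $n-1$ positive eigenvalues $2\alpha\varepsilon e^{-\alpha|x-y|^2}$ and one negative eigenvalue $\varepsilon(2\alpha-4\alpha^2|x-y|^2)e^{-\alpha|x-y|^2}$ on the annulus $A:=B_R(y)\setminus\overline{B_{R/2}(y)}$. Substituting into $\pucci$, the leading term at large $\alpha$ is $4\alpha^2\gamma|x-y|^2\cdot\varepsilon e^{-\alpha|x-y|^2}$; the contributions $-\consp|Dv|$ and $\consz v$ are both controlled by the dominant $\alpha^2$ term, so once $\alpha=\alpha(n,\gamma,\Gamma,\consp,\consz,R)$ is chosen sufficiently large, $v$ is a classical supersolution of $G\geq 0$ on $A$, and $Dv=2\alpha\varepsilon(x-y)e^{-\alpha|x-y|^2}$ points along the outer normal to $\partial B_R(y)$.

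\emph{Strong maximum principle.} Suppose for contradiction that $u(\bar z)=0$ for some $\bar z\in\Omega$. The set $\Omega^-:=\{u<0\}$ is open, nonempty (since $u\not\equiv 0$), and a proper subset of $\Omega$, so a standard construction furnishes $z_1\in\Omega^-$ and $r>0$ with $\overline{B_r(z_1)}\subseteq\Omega$, $B_r(z_1)\subseteq\Omega^-$, and a point $z_0\in\partial B_r(z_1)$ at which $u(z_0)=0$. Apply the barrier above with $y=z_1$, $R=r$. Since $u$ is continuous and strictly negative on the compact set $\partial B_{r/2}(z_1)$, one chooses $\varepsilon>0$ small enough that $v\geq u$ there; on the outer sphere $v=0\geq u$; thus $v\geq u$ on $\partial A$ and comparison yields $v\geq u$ throughout $\overline{A}$. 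Because $v>0\geq u$ also for $|x-z_1|>r$, the inequality $u\leq v$ extends to a full two-sided neighborhood of $z_0$, with equality at $z_0$. Hence $v$ is an admissible $C^2$ test function touching $u$ from above at $z_0$, and the viscosity subsolution property together with $u(z_0)=0$ gives $\pucci(D^2v(z_0))-\consp|Dv(z_0)|\leq 0$, contradicting the strict positivity of the same quantity produced by the barrier computation at $|x-z_1|=r$. Therefore $u<0$ throughout $\Omega$.

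\emph{Boundary point lemma.} Now let $x_0\in\partial\Omega$ with $u(x_0)=0$, and use the interior sphere condition to choose $B_R(y)\subset\Omega$ tangent to $\partial\Omega$ at $x_0$. By the strong maximum principle just established, $-c:=\max_{\partial B_{R/2}(y)}u<0$. Repeat the comparison on $A$: select $\varepsilon>0$ so small that $v\geq -c\geq u$ on $\partial B_{R/2}(y)$, and note $v=0\geq u$ on $\partial B_R(y)\cap\overline{\Omega}$; comparison gives $u\leq v$ throughout $\overline{A}$, with $u(x_0)=v(x_0)=0$. For any $\xi\in\R^n$ with $\xi\cdot\nu<0$, the point $x_0+h\xi$ lies in $A$ for all small $h>0$, so
$$
\frac{u(x_0)-u(x_0+h\xi)}{h}\;\geq\;\frac{v(x_0)-v(x_0+h\xi)}{h},
$$
and letting $h\to 0^+$ the right-hand side converges to $-Dv(x_0)\cdot\xi=-2\alpha\varepsilon R e^{-\alpha R^2}(\nu\cdot\xi)>0$, delivering the claimed one-sided derivative bound and completing the proof.

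The main delicacy is the strong maximum principle step: the barrier $v$ is verified to be a supersolution only on the annulus $A$, yet to reach the contradiction we must use it as a two-sided test function at $z_0\in\partial B_r(z_1)\subset\Omega$. What legitimizes this is the sign property $v>0\geq u$ outside $\overline{B_r(z_1)}$, which for free extends the comparison $u\leq v$ from $\overline{A}$ to a full neighborhood of $z_0$; only with this extension does the viscosity subsolution inequality meet the strict positive lower bound on $\pucci(D^2v)-\consp|Dv|$ at $z_0$ and produce the contradiction.
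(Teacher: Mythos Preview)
Your proof is correct and follows essentially the same approach as the paper: the same exponential barrier $e^{-\alpha R^2}-e^{-\alpha|x-y|^2}$ on an annulus, used once as a test function touching $u$ from above at an interior zero (strong maximum principle) and once for comparison near a boundary zero (Hopf estimate). The only difference is the order of presentation---you prove $u<0$ in $\Omega$ first and then the boundary statement, whereas the paper does the reverse---and your explicit recasting of $-\consz|u|$ as $+\consz u$ to invoke a proper operator, which the paper leaves implicit.
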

\begin{proof}
Suppose first that $u < 0$ in $\Omega$. Select a ball $B(y,R) \subseteq \Omega$ such that $B \cap \partial\Omega = \{ x_0 \}$. Let $V$ be the annular region
\begin{equation*}
V = \left\{ x\in \R^n : R/2 < |x-y| < R \right\}.
\end{equation*}
Assume without loss of generality $y=0$, and define a function $v$ by
\begin{equation*}
v(x) = e^{-\alpha R^2} - e^{-\alpha |x|^2}
\end{equation*}
where $\alpha > 0$ will be selected below. Notice
\begin{equation*}
-e^{-\alpha |x|^2} < v(x) < 0
\end{equation*}
for $x\in V$. We claim that if $\alpha > 0$ is large enough, then $v$ is a supersolution of
\begin{equation}\label{eq:hopfvsuper}
\pucci(D^2v ) - \consp |Dv| - \consz |v| > 0 \quad \mbox{in} \quad \bar{V}.
\end{equation}
For $x\in \bar{V}$, we estimate
\begin{align*}
\lefteqn{\pucci(D^2v(x) ) - \consp |Dv(x)| - \consz |v(x)|} \qquad \qquad & \\ 
& \geq 2 \alpha e^{-\alpha |x|^2 } \pucci ( I - 2\alpha x \otimes x ) - 2\alpha \consp |x| e^{-\alpha |x|^2} - \consz |v(x)| \\
& \geq e^{-\alpha |x|^2} \left( -2\alpha n \Gamma + 4 \alpha^2 \gamma |x|^2 - 2 \alpha \consp |x| - \consz \right) \\
& \geq e^{-\alpha |x|^2} \left( -2\alpha n \Gamma + \alpha ^2 \gamma R^2 - 2 \alpha \consp R - \consz \right ).
\end{align*}
Thus $v$ satisfies \EQ{hopfvsuper} provided we select $\alpha > 0$ large enough that $\alpha \geq 1$ and 
\begin{equation*}
\alpha > \frac{2n\Gamma + 2\consp R + \consz}{R^2 \gamma}. 
\end{equation*}
Since $u < 0$ in $\Omega$, we may select $\varepsilon > 0$ small enough that
\begin{equation*}
u(x) \leq \varepsilon v(x) \quad \mbox{for all } |x| = R/2.
\end{equation*}
We also have
\begin{equation*}
u(x) \leq 0 = \varepsilon v(x) \quad \mbox{for all } |x| = R.
\end{equation*}
Therefore, we may apply the comparison principle to deduce that
\begin{equation*}
u \leq \varepsilon v \quad \mbox{in} \quad V.
\end{equation*}
Hence
\begin{eqnarray*}
\limsup_{h\to 0+} \frac{- u(x_0+h \xi)}{h} & \geq & \varepsilon \limsup_{h\to 0+}\frac{- v(x_0+h \xi)}{h} \\
& = & \varepsilon \limsup_{h\to 0+} \frac{-e^{-\alpha R^2} + e^{-\alpha |x_0+h\xi|^2}}{h} \\ 
& \geq & - 2\alpha \varepsilon e^{-\alpha R^2} x_0\cdot \xi \\
& > & 0.
\end{eqnarray*}
This completes our proof in the case that $u < 0$ in $\Omega$.

Now suppose that $u = 0$ somewhere in $\Omega$. Let $W\subseteq \Omega$ be the zero set of $u$ in $\Omega$. Select $y\in \Omega$ such that $u(y) < 0$ and the distance from $y$ to $W$ is less than the distance from $y$ to $\partial\Omega$. Find $x_0\in W$ such that $|x_0-y|=R$, where $R = \inf_{x\in W} |x-y|$. Once again, assume $y=0$ and let $V$ and $v$ be defined as above. Following the same procedure as above, we conclude that $u\leq \varepsilon v$ in $V$, as well as $u \leq 0 \leq \varepsilon v$ for $|x|\geq R$. Since $u(x_0) = 0 = \varepsilon v(x_0)$, it follows that
\begin{equation*}
x \mapsto u(x) - \varepsilon v(x) \quad \mbox{has a local maximum at} \quad x=x_0.
\end{equation*}
Since $u$ is a viscosity subsolution of \EQ{hopfeq}, we have
\begin{equation*}
\varepsilon \left[ F(D^2v(x_0),Dv(x_0),v(x_0), x_0) - \consp|Dv(x_0)| \right] \leq 0.
\end{equation*}
This contradicts \EQ{hopfvsuper}, completing the proof.
\end{proof}

\bibliographystyle{elsart-num-sort}
\bibliography{bibdb1}

\end{document}